\newtheorem{theorem}{Theorem}[section]
\newtheorem{proposition}[theorem]{Proposition}
\newtheorem{lemma}[theorem]{Lemma}
\newtheorem{corollary}[theorem]{Corollary}
\theoremstyle{definition}
\newtheorem{definition}[theorem]{Definition}
\newtheorem{example}[theorem]{Example}
\newtheorem{remark}[theorem]{Remark}
\numberwithin{equation}{section}
\DeclareMathOperator{\Osc}{Osc}
\newcommand*\mR{\mathbb{R}}
\newcommand*\Rd{\mathbb{R}^d}
\newcommand*\mP{\mathbb{P}}
\newcommand*\mE{\mathbb{E}}
\newcommand*\diam{{\rm diam}}
\newcommand*\arccot{{\rm arccot}}
\newcommand*\wtn{\widetilde{\eta}}
\newcommand*\eps{\varepsilon}
\newcommand*\semicol{\, ; \,}
\let\svthefootnote\thefootnote
\newcommand\freefootnote[1]{%
  \let\thefootnote\relax%
  \footnotetext{#1}%
  \let\thefootnote\svthefootnote%
}
\newcommand*\unD{\underline{D}}
\begin{document}


\subjclass[2020]{Primary 35S16, 60J50, 35C15}
\keywords{fractional Laplacian, heat equation, Martin representation, boundary regularity}

\title[Caloric functions and regularity for the fractional Laplacian]{Caloric functions and boundary regularity for the fractional Laplacian in Lipschitz open sets}

\author[G. Armstrong]{Gavin Armstrong}
\email{gavin.k.armstrong@gmail.com}
\author[K. Bogdan]{Krzysztof Bogdan}
\address{Faculty of Pure and Applied Mathematics,
Wroc\l aw University of Science and Technology,
Wyb. Wyspia\'nskiego 27, 50-370 Wroc\l aw, Poland}
\email{krzysztof.bogdan@pwr.edu.pl}
\author[A. Rutkowski]{Artur Rutkowski}
\address{Faculty of Pure and Applied Mathematics,
Wroc\l aw University of Science and Technology,
Wyb. Wyspia\'nskiego 27, 50-370 Wroc\l aw, Poland}
\email{artur.rutkowski@pwr.edu.pl}

\begin{abstract}
     We give Martin representation of nonnegative functions caloric with respect to the fractional Laplacian in Lipschitz open sets.  
The caloric functions are defined in terms of the mean value property for the space-time isotropic $\alpha$-stable L\'evy process. 
To derive the representation, we first establish the existence of the parabolic Martin kernel. This involves proving new boundary regularity results for both the fractional heat equation and the fractional Poisson equation with Dirichlet exterior conditions. Specifically, we demonstrate that the ratio of the solution and the Green function is H\"older continuous up to the boundary.
\end{abstract}
\freefootnote{GA was partially supported by the National Science Center of Poland (NCN) under grant 2016/23/B/ST1/01665. KB was partially supported through the DFG-NCN Beethoven Classic 3 programme, contract no. 2018/31/G/ST1/02252 (National Science Center, Poland) and SCHI-419/11–1 (DFG, Germany). AR~was partially supported by the National Science Center of Poland (NCN) under grant 2019/35/N/ST1/04450.}
\maketitle

\section{Introduction}\label{s.I}

Let $0 < \alpha < 2$ and $d \geq 2$. For $u\in C^2_b(\mR^d)$, define
\begin{align}\label{eq:fracLapl}
	(-\Delta)^{\alpha/2} u(x) := \lim\limits_{\eps\to 0^+} \int_{|x-y|>\eps} (u(x) - u(y))\nu(x,y)\, dy,\quad x\in \mR^d,
\end{align}
where $\nu(x,y) = c_{d,\alpha}|x-y|^{-d-\alpha}$, and denote $\Delta^{\alpha/2} := -(-\Delta)^{\alpha/2}$. 
Let $D \subset \mathbb{R}^{d}$ be a nonempty bounded open Lipschitz set with localization radius $r_0\in (0,\infty)$ and Lipschitz constant $\lambda\in (0,\infty)$.  One of our goals is to investigate the 
structure of nonnegative solutions to the initial-boundary value problem for the fractional heat equation:
\begin{align}\label{eq:FHE}
	\begin{cases}
	\partial_tu(t,x) = \Delta^{\alpha/2}u(t,x),\quad &t\in (0,T),\ x\in D,\\
	u(t,x) = g(t,x),\quad &t\in (0,T),\ x\in D^c,\\
	u(0,x) = u_0(x),\quad &x\in D.
	\end{cases}
\end{align}
Solutions to \eqref{eq:FHE} are called \textit{caloric functions}. They are defined in terms of the mean value property for the space-time $\alpha$-stable L\'evy process; we refer to Section~\ref{sec:caloric} for details and connections with the classical notion of solution to \eqref{eq:FHE}. As shown by Bogdan \cite{MR1704245} (see also Abatangelo \cite{MR3393247} and Bogdan, Kulczycki, and Kwa\'snicki \cite{MR2365478}), nonnegative \textit{harmonic functions} for the fractional Laplacian on $D$ can be decomposed into a \textit{regular} part, which can be recovered from the exterior values, and a \textit{singular} part, vanishing outside of $D$ and represented as an integral with respect to a finite measure on $\partial D$ of the (elliptic) Martin kernel for $D$ and the fractional Laplacian. Our ultimate goal, which we complete in Section~\ref{sec:repr}, is to give a counterpart of this decomposition for caloric functions.
In particular, in Theorem~\ref{th:repr}, we show that nonnegative caloric functions with $u_0=g=0$ can be expressed as integrals with respect to the \textit{parabolic Martin kernel}. To obtain the representation, we prove several new boundary regularity results for the fractional Laplacian in Lipschitz sets, which are a significant focus of this paper. Needless to say, the results point out directions of development for other nonlocal operators and various classes of open sets.

Singular caloric functions were recently represented by Chan, G\'omez-Castro, and V\'azquez \cite{MR4462819} for domains more regular than Lipschitz, such as $C^{1,1}$ domains. While the authors of \cite{MR4462819} address more general operators than our Dirichlet, or \textit{restricted}, fractional Laplacian, they do so by assuming that the (elliptic) Green function exhibits uniform power-type decay at the boundary. Since for Lipschitz open sets, the behavior of the Dirichlet Green function of the fractional Laplacian is more nuanced (see Jakubowski \cite{MR1991120}), the results of \cite{MR4462819} are not applicable in our setting.
Another difference between \cite{MR4462819} and our work is 
that we do not require any specific regularity or integrability conditions for caloric functions, except for assuming nonnegativity and finiteness of integrals in the mean value property. 
Furthermore, in our representation, the \textit{boundary data} may be a measure; for example a Dirac delta represents represents a fixed  parabolic Martin kernel.  Furthermore, in Theorem~\ref{th:noinit}, we demonstrate that even without a prescribed initial condition, $u(\eps,\cdot)$ converges to a measure on $D$ as $\eps\to 0^+$. This measure finitely integrates the function $x\mapsto \mP^x(\tau_D>1)$ on $D$ (see below), similar to the condition used in \cite{MR4462819}.

 In our development, we utilize some basic probabilistic potential theory; see, e.g., Sato \cite{MR1739520}.  Let $X = ( X_{t} )_{t \geq 0}$ be the isotropic $\alpha$-stable L\'{e}vy process in $\mathbb{R}^{d}$. For $x \in \mathbb{R}^{d}$, we denote by $\mP^{x}$ and $\mE^{x}$ the probability and the expectation of the process starting from $x$, and $\mP:=\mP^{0}$, $\mE:=\mE^{0}$. We then consider
\begin{equation} \label{fet}
	\tau_{D} := \inf \{ s > 0 \ : \ X_{s} \notin D\},
\end{equation}
the first exit time of the process $X$ from $D$, and the survival probability:
$$\mP^x(\tau_D > t) = \int_D p_t^D(x,y)\, dy,$$
where $p_t^D$ is the \textit{Dirichlet heat kernel} of $\Delta^{\alpha/2}$ in $D$ (for details see Section~\ref{sec:prelim}).
Furthermore, let $G_D$ be the (elliptic) Green function of $\Delta^{\alpha/2}$ in $D$. We fix arbitrary $t_0\in (0,\infty)$ and $x_0\in D$, reference time and point. 

There are several reasonable ways to define the parabolic Martin kernel in Lipschitz open sets. The general idea is to normalize $p_t^D$  by constructing a ratio 
that converges to a nontrivial limit at the boundary of $D$. 
Each of the following expressions will be called a parabolic Martin kernel:
	\begin{align}
  \label{eq:Yaglom}
		&\eta_{t, Q}(x) := \lim_{D\ni y \to Q} \frac{p_{t}^{D}(x,y)}{\mP^{y} ( \tau_{D} > 1 )},\\
  \label{eq:nxo}      &\eta^{x_0}_{t,Q}(x) :=\lim\limits_{D\ni y \to Q}\frac{p_t^D(x,y)}{G_D(x_0,y)},\\
  \label{eq:paraMK}
		&\widetilde{\eta}_{t,Q}(x) := \lim\limits_{D\ni y\to Q} \frac{p_{t}^D(x,y)}{p_{t_0}^D(x_0,y)}.
	\end{align}
 Here, $t>0$, $x\in D$, and $Q\in\partial D$.
 We recall that the heat kernel plays the role of the Green function for the heat equation, see, e.g., Doob \cite{MR1814344}, Watson \cite{MR2907452}, or Bogdan and Hansen \cite[Subsection 9.4]{hansen2023positive}. This might indicate that $\widetilde{\eta}$ is the canonical parabolic Martin kernel, however $\eta$ and $\eta^{x_0}$ offer a more explicit description of the boundary behavior of $p_t^D$ and are more convenient to handle via the existing elliptic theory. If $D$ is $C^{1,1}$, then one can also normalize $p^D_t$ by using $\delta_D(y)^{\alpha/2}$ with
 \begin{align*}
     \delta_D(y):= \inf \{|x-y|: x\in \partial D\},
 \end{align*}
 see Chen, Kim, and Song \cite{MR2677618}; see also
 \cite{MR4462819}. 
 The next result 
 may be considered as a consequence and a follow-up of the approximate factorization \eqref{factorization} of $p_t^D$ by Bogdan, Grzywny, and Ryznar \cite{MR2722789}. 
\begin{theorem} \label{Main1}
Recall that $D\subset \Rd$ is open, bounded, and Lipschitz with localization radius $r_0$, Lipschitz constant $\lambda$, and reference point $x_0$ and time $t_0$. Then, the limits in  \eqref{eq:Yaglom}, \eqref{eq:nxo}, and \eqref{eq:paraMK} exist for all 
$t>0$, $x\in D$, and $Q\in\partial D$. Furthermore, they are finite, strictly positive, continuous in $t$ and $x$, and 
	\begin{align}
		\eta_{1, Q}(x) & \approx  \mP^{x} ( \tau_{D} > 1 ), \quad x\in D,\label{Estimation} \\
		\eta_{t + s, Q}(x) & =  \int_{D} \eta_{t, Q}(z) p_{s}^{D}(z,x)\, dz,\quad 0<s,t<\infty,\quad x\in D. \label{Entrance}
	\end{align}
\end{theorem}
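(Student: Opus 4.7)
My plan is to establish existence of the kernel \eqref{eq:nxo} first, using the boundary regularity theorem for caloric functions proved earlier in the paper, and then to transfer the result to \eqref{eq:Yaglom} and \eqref{eq:paraMK} via the factorization \eqref{factorization}. The estimate \eqref{Estimation} will then be immediate from the factorization, and the entrance law \eqref{Entrance} will follow from Chapman--Kolmogorov and dominated convergence.

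To obtain \eqref{eq:nxo}, I fix $x\in D$ and $t>0$ and view $(s,y)\mapsto p_s^D(x,y)$ as a nonnegative caloric function on $(0,\infty)\times D$, vanishing outside $D$. The paper's boundary regularity theorem --- that for such caloric $u$ the ratio $u(t,\cdot)/G_D(x_0,\cdot)$ extends H\"older-continuously to $\overline{D}$ --- applied to $u(s,y)=p_s^D(x,y)$ gives the existence of the limit $\eta^{x_0}_{t,Q}(x)$ in \eqref{eq:nxo}. Joint continuity in $(t,x)$ follows from uniform versions of this boundary regularity combined with the interior smoothness of $p_t^D$ and a compactness argument. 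For the kernels in \eqref{eq:Yaglom} and \eqref{eq:paraMK}, I apply the same theorem to the caloric functions $y\mapsto \mP^y(\tau_D>1)$ and $y\mapsto p_{t_0}^D(x_0,y)$; this shows that $\mP^y(\tau_D>1)/G_D(x_0,y)$ and $p_{t_0}^D(x_0,y)/G_D(x_0,y)$ have H\"older-continuous extensions to $\overline{D}$, while \eqref{factorization} shows their boundary limits are strictly positive and finite. Dividing the already-existing limit \eqref{eq:nxo} by these then yields existence, positivity, and continuity of \eqref{eq:Yaglom} and \eqref{eq:paraMK}. The estimate \eqref{Estimation} reads
$$\eta_{1,Q}(x)\;=\;\lim_{D\ni y\to Q}\frac{p_1^D(x,y)}{\mP^y(\tau_D>1)}\;\approx\;\mP^x(\tau_D>1),$$
which is just \eqref{factorization} at $t=1$ with $y$ near $\partial D$, since the ``spatial'' factor is bounded above and below (recall $|x_0-y|$ stays bounded away from $0$ for $x_0$ fixed in $D$).

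The entrance law \eqref{Entrance} is obtained by dividing the Chapman--Kolmogorov identity
$$p_{t+s}^D(x,y)\;=\;\int_D p_s^D(x,z)\,p_t^D(z,y)\,dz$$
by $\mP^y(\tau_D>1)$ and passing to the limit $y\to Q$ inside the integral. This is justified by the uniform bound $p_t^D(z,y)/\mP^y(\tau_D>1)\leq C\,\mP^z(\tau_D>1)$ from \eqref{factorization} and dominated convergence against the finite measure $p_s^D(x,z)\,dz$ on $D$.

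The principal obstacle is the existence of the spatial limit itself. Since $y\mapsto p_t^D(x,y)$ is not $\alpha$-harmonic in $y$ (only caloric), the classical elliptic boundary Harnack principle does not apply directly. The new parabolic boundary regularity results proved in earlier sections of the paper are thus indispensable: they provide not merely the two-sided bounds already encoded in \eqref{factorization}, but the actual H\"older-continuous boundary extensions needed to extract limits.
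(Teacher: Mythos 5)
Your proposal is correct in substance but follows a genuinely different route from the paper. The paper proves Theorem~\ref{Main1} in Section~\ref{sec:Yaglom} with comparatively soft tools: it first writes $\mP^x(\tau_D>1)=(G_DP_1^D\kappa_D)(x)$ (Lemma~\ref{L38}), deduces via the elliptic Martin kernel and Vitali's theorem (Lemma~\ref{Lemma7}) that $\mP^x(\tau_D>1)/G_D(x,x_0)\to C_1$ (Lemma~\ref{Thm3}), and then obtains \eqref{eq:Yaglom} by a Prokhorov compactness argument for the normalized measures $m_x$, identifying the limit uniquely by testing against $\phi\in C_c^\infty(D)$ through the identity $P_1^D\phi=G_DP_1^Du_\phi$; the kernels $\eta^{x_0}$ and $\wtn$ are then derived from $\eta$. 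You invert this: you take the quantitative H\"older regularity of Theorem~\ref{th:modulus} as the starting point, extract the boundary limit of $p_t^D(x,\cdot)/G_D(x_0,\cdot)$ directly from the uniform H\"older estimate, and recover $\eta$ and $\wtn$ by dividing by the limits of $\mP^y(\tau_D>1)/G_D(x_0,y)$ and $p_{t_0}^D(x_0,y)/G_D(x_0,y)$. This is logically sound and not circular: the proof of Theorem~\ref{th:modulus} rests only on the spectral identity $p_t^D(x,\cdot)=G_D\Delta^{\alpha/2}p_t^D(x,\cdot)$, Theorem~\ref{lem:lpmodulus}, Lemma~\ref{L38}, and \eqref{factorization}, none of which use Theorem~\ref{Main1}. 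What the paper's route buys is a self-contained, softer existence proof that precedes the heavy elliptic $L^p$-modulus machinery; what your route buys is a uniform rate of convergence and an almost immediate continuity statement (the H\"older constants in Theorem~\ref{th:modulus} are uniform in $x\in D$ and $t\in[T_1,T_2]$, so the convergence $p_t^D(x,y)/G_D(x_0,y)\to\eta^{x_0}_{t,Q}(x)$ is uniform on compacta and continuity in $(t,x)$ follows), which the paper only obtains later in Corollary~\ref{cor:MYcont}.

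Two small points to tighten. First, the strict positivity and finiteness of $\lim_{y\to Q}\mP^y(\tau_D>1)/G_D(x_0,y)$ does not follow from \eqref{factorization} alone: the factorization compares $p_1^D(x_0,y)$ with $\mP^y(\tau_D>1)$, but to compare either with $G_D(x_0,y)$ near $\partial D$ you need $\mP^y(\tau_D>1)\approx\Phi(y)=G_D(x_0,y)\wedge 1$, i.e., \eqref{e.por} of Remark~\ref{r.por} (via \cite[Theorem~2]{MR2722789} and \cite[Lemma~17]{MR1991120}); the same comparison underlies positivity of $\eta^{x_0}_{t,Q}(x)$ itself. Second, $y\mapsto\mP^y(\tau_D>1)$ is not literally one of the kernels covered by Theorem~\ref{th:modulus}; you should invoke Corollary~\ref{cor:semigroup} with $u_0\equiv 1$ (or Lemma~\ref{Thm3}) for the existence of its normalized boundary limit. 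With these references supplied, the argument is complete.
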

The formula \eqref{Estimation} is a sample of more general estimates for $\eta$, which we give in Corollary~\ref{cor:estimates} below.
The proofs of Theorem~~\ref{Main1} and other results of this section are given later on. 
Here we note that the mere existence of a Martin-type kernel is a deep \textit{boundary regularity}\footnote{Here and below, the term signals relative regularity, i.e., continuity or even H\"older continuity of \textit{ratios} at the boundary.} result. In the elliptic setting, for $G_D$, it is usually proved using the boundary Harnack principle. For solutions of parabolic equations like \eqref{eq:FHE}, we may utilize the elliptic results after expressing the numerators and denominators in 
 \eqref{eq:Yaglom}, \eqref{eq:nxo}, and \eqref{eq:paraMK} as Green potentials. This is precisely our approach---it was used before by Bogdan, Palmowski, and Wang \cite{bogdan2018} for Lipschitz cones at the vertex. We further remark that an early version of proof of Theorem~\ref{Main1} for \eqref{eq:Yaglom} has appeared in the PhD thesis of the first-named author \cite{Gavin}.

To obtain the representation of nonnegative caloric functions, we refine Theorem~\ref{Main1} to ensure a uniform rate of convergence in \eqref{eq:Yaglom}. To this end, we extend the spatial domain of
the functions in \eqref{eq:Yaglom}, \eqref{eq:nxo}, \eqref{eq:paraMK}, by additionally defining, for $t>0$, $x\in D$, and $y\in D$,
\begin{equation}\label{e.eeta}
		\eta^{x_0}_{t,y}(x) :=\frac{p_t^D(x,y)}{G_D(x_0,y)},\quad \eta_{t,y}(x) := \frac{p_t^D(x,y)}{\mP^y(\tau_D>1)},\quad \widetilde{\eta}_{t,y}(x) := \frac{p_t^D(x,y)}{p_{t_0}^D(x_0,y)}.
	\end{equation} 

	\begin{theorem}\label{th:modulus}
	Recall that $D\subset \Rd$ is open, bounded, and Lipschitz with localization radius $r_0$, Lipschitz constant $\lambda$, and reference point $x_0$ and time $t_0$. Fix $r_1\in (0,\infty)$ and $0<T_1<T_2<\infty$. For $x\in D$ and $t\in[T_1,T_2]$, $\eta$, $\eta^{x_0}$,  and $\widetilde{\eta}$ 
	are H\"older continuous in $y$ on $\overline{D}$,  $\overline{D}$, and $\overline{D}\setminus B(x_0,r_1)$, respectively. The H\"older exponents and constants depend only on $d,\alpha,\unD,T_1,T_2$ (for $\eta^{x_0}$ also on $x_0,r_1$; for $\widetilde{\eta}$ also on $t_0,x_0$).
\end{theorem}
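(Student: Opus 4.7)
The plan is to reduce the theorem to an elliptic boundary H\"older regularity result proved earlier in the paper: for $f\in L^\infty(D)$ and $u(y)=\int_D G_D(y,z)f(z)\,dz$, the ratio $u/G_D(x_0,\cdot)$ extends to a H\"older continuous function on $\overline{D}$ with exponent and constant depending on $d,\alpha,\unD,x_0$ and $\|f\|_\infty$. The idea is to write each numerator and each normalizer in \eqref{e.eeta} as a Green potential of a bounded source on $D$, apply that result, and then take ratios.

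The first step is the Green-potential representation. For fixed $x\in D$ and $t>0$, $y\mapsto p_t^D(x,y)$ vanishes on $D^c$ and, by the backward equation in $y$, satisfies $(-\Delta)^{\alpha/2} p_t^D(x,\cdot)(y)=-\partial_t p_t^D(x,y)$ on $D$. Uniqueness for the Dirichlet problem then gives
\begin{equation*}
p_t^D(x,y)=\int_D G_D(y,z)\,\phi_t^x(z)\,dz,\qquad \phi_t^x(z):=-\partial_t p_t^D(x,z),
\end{equation*}
and analogously $\mP^y(\tau_D>1)=\int_D G_D(y,z)\,h(z)\,dz$ with $h(z)=-\partial_s \mP^z(\tau_D>s)|_{s=1}\ge 0$ the density of $\tau_D$ at time one, and $p_{t_0}^D(x_0,y)=\int_D G_D(y,z)\,\phi_{t_0}^{x_0}(z)\,dz$.

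The second step is to bound all three sources in $L^\infty(D)$ uniformly in the allowed parameters. For $t\in[T_1,T_2]$, standard Dirichlet heat-kernel estimates (the factorization \eqref{factorization} of \cite{MR2722789} combined with semigroup bounds of the form $\|\partial_t P_t^D\|_{L^\infty\to L^\infty}\lesssim 1/t$) give $|\phi_t^x(z)|\le C(d,\alpha,\unD,T_1,T_2)$ for all $x,z\in D$, and similarly $\|h\|_\infty$ is controlled in terms of $d,\alpha,\unD$. Although $\phi_t^x$ may change sign, linearity of the elliptic result lets me split $\phi_t^x=\phi_t^{x,+}-\phi_t^{x,-}$ and treat each part separately. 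Feeding these bounds into the elliptic boundary regularity result yields H\"older continuity on $\overline{D}$ of $p_t^D(x,\cdot)/G_D(x_0,\cdot)$ and of $\mP^\cdot(\tau_D>1)/G_D(x_0,\cdot)$, and on $\overline{D}\setminus B(x_0,r_1)$ of $p_{t_0}^D(x_0,\cdot)/G_D(x_0,\cdot)$, the exclusion isolating the pole of $G_D(x_0,\cdot)$. This already settles the claim for $\eta^{x_0}$.

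The final step obtains $\eta$ and $\wtn$ by taking products of ratios. By \eqref{Estimation} and the factorization of \cite{MR2722789}, $\mP^y(\tau_D>1)\approx G_D(x_0,y)$ uniformly on $\overline{D}$; combined with the H\"older continuity of $\mP^\cdot(\tau_D>1)/G_D(x_0,\cdot)$ just obtained, this makes $G_D(x_0,\cdot)/\mP^\cdot(\tau_D>1)$ a H\"older continuous, strictly positive, bounded function on $\overline{D}$. Multiplying by $\eta^{x_0}_{t,y}(x)$ yields the H\"older regularity of $\eta_{t,y}(x)$. For $\wtn$ the same reasoning goes through on $\overline{D}\setminus B(x_0,r_1)$, using the comparability $p_{t_0}^D(x_0,\cdot)\approx G_D(x_0,\cdot)$ away from $x_0$. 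The principal technical hurdle is the uniform $L^\infty$ control of $\phi_t^x$ across $x\in D$ and $t\in[T_1,T_2]$ independently of $x$; once this is in hand, the rest is a tidy reduction to the elliptic theory.
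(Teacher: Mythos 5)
Your overall strategy---represent $p_t^D(x,\cdot)$, $\mP^{\cdot}(\tau_D>1)$, and $p_{t_0}^D(x_0,\cdot)$ as Green potentials of bounded sources and feed them into the relative H\"older estimate for Green potentials---is essentially the paper's route. The source for $p_t^D(x,\cdot)$ is $(-\Delta)^{\alpha/2}_y p_t^D(x,y)=-\partial_t p_t^D(x,y)$, whose uniform boundedness is Corollary~\ref{cor:lptd}, proved via the spectral decomposition and the eigenfunction bounds \eqref{eq:eigenbound}; the same spectral analysis (Lemma~\ref{lem:spec}) is what legitimizes the representation $p_t^D(x,\cdot)=G_D[(-\Delta)^{\alpha/2}p_t^D(x,\cdot)]$, so your bare appeal to ``uniqueness for the Dirichlet problem'' should be replaced by that domain information. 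The source for the survival probability is $P_1^D\kappa_D$ from Lemma~\ref{L38}. Thus the ``principal technical hurdle'' you flag is already resolved in Section~\ref{sec:spect}, and your step for $\eta^{x_0}$ away from $x_0$ matches the paper.

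The genuine gap is in the last step. The comparability you invoke, $\mP^y(\tau_D>1)\approx G_D(x_0,y)$ uniformly on $\overline{D}$, is false: by \eqref{e.por} one only has $\mP^y(\tau_D>1)\approx G_D(x_0,y)\wedge 1$, and near $x_0$ the Green function blows up like $|y-x_0|^{\alpha-d}$ while the survival probability stays bounded. Consequently $G_D(x_0,\cdot)/\mP^{\cdot}(\tau_D>1)$ is neither bounded nor H\"older on any neighborhood of $x_0$, and your product argument does not deliver the H\"older continuity of $\eta$ on all of $\overline{D}$, which is what the theorem asserts for $\eta$. For the same reason the elliptic input (Theorem~\ref{lem:lpmodulus}, Corollary~\ref{cor:gdmod}) is only available for $y,y'\in\overline{D}\setminus B(x_0,r)$, so your intermediate claim that $p_t^D(x,\cdot)/G_D(x_0,\cdot)$ and $\mP^{\cdot}(\tau_D>1)/G_D(x_0,\cdot)$ are H\"older on all of $\overline{D}$ is also unsupported near the pole. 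The paper sidesteps this by letting the auxiliary Green-function pole move: for each pair $y,y'$ it picks an interior point $x_2=x_2(y,y')$ with $y,y'\notin B(x_2,r/4)$, factors the ratio through $G_D(x_2,\cdot)$, and uses the two-sided bounds \eqref{eq:upbound}--\eqref{eq:lowbound}. Your version can be repaired either by adopting that device or by adding a separate interior-regularity argument on $B(x_0,r_1)\cap D$, where the numerator and denominator of $\eta$ are both H\"older and the denominator is bounded below; as written, though, the step fails.
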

Here and below, we say constants depend on $\unD$ if  they depend only on $r_0,\lambda$, and an upper bound for $\diam (D)$.
Theorem~\ref{th:modulus} yields the following boundary regularity for the semigroup $$P_t^Df(x):=\int_D p_t^D(x,y)f(y)dy.$$
\begin{corollary}\label{cor:semigroup}
Fix $r_1\in (0,\infty)$. Let $u_0\in L^1(D)$, $0<T_1<T_2<\infty$, and $t\in [T_1,T_2]$. Then, the functions
    \begin{align*}
        \frac{P_t^D u_0(y)}{G_D(x_0,y)},\quad \frac{P_t^D u_0(y)}{\mP^y(\tau_D>1)},\quad \frac{P_t^Du_0(y)}{p_{t_0}^D(x_0,y)}
    \end{align*}
    are H\"older continuous in $y$ on $\overline{D}\setminus B(x_0,r_1)$, $\overline{D}$, and $\overline{D}$ respectively. The H\"older exponents and constants depend only on $d,\alpha,\unD,T_1,T_2$ (and $t_0,x_0$, $r_1$, where relevant). 
\end{corollary}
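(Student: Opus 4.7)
The plan is to pass the $P_t^D u_0$ integral through the normalizing denominators, recognize the integrand as one of the ratios defined in \eqref{e.eeta}, and then apply Theorem~\ref{th:modulus} uniformly in the spatial variable.

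First, I would use the symmetry $p_t^D(y,z)=p_t^D(z,y)$ of the Dirichlet heat kernel to rewrite
\begin{align*}
\frac{P_t^D u_0(y)}{G_D(x_0,y)} = \int_D \frac{p_t^D(z,y)}{G_D(x_0,y)}\, u_0(z)\, dz = \int_D \eta^{x_0}_{t,y}(z)\, u_0(z)\, dz,
\end{align*}
and analogously
\begin{align*}
\frac{P_t^D u_0(y)}{\mP^y(\tau_D>1)} = \int_D \eta_{t,y}(z)\, u_0(z)\, dz, \qquad \frac{P_t^D u_0(y)}{p_{t_0}^D(x_0,y)} = \int_D \widetilde{\eta}_{t,y}(z)\, u_0(z)\, dz.
\end{align*}
Each of these identities is simply a direct unpacking of the definitions in \eqref{e.eeta}.

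Next, I would invoke Theorem~\ref{th:modulus}. The essential point is that for $t\in[T_1,T_2]$ and each fixed $z\in D$, the maps $y\mapsto\eta^{x_0}_{t,y}(z)$, $y\mapsto \eta_{t,y}(z)$, and $y\mapsto \widetilde{\eta}_{t,y}(z)$ are Hölder continuous on the stated sets with an exponent $\beta$ and constant $C$ depending only on $d,\alpha,\unD,T_1,T_2$ (and on $x_0,r_1$ or $x_0,t_0$ where relevant)---crucially, independent of the spatial variable $z$ and of $t$ in the compact range $[T_1,T_2]$. Thus, for $y_1,y_2$ in the appropriate Hölder set,
\begin{align*}
\bigl|\eta^{x_0}_{t,y_1}(z) - \eta^{x_0}_{t,y_2}(z)\bigr| \leq C\,|y_1-y_2|^\beta \qquad (z\in D),
\end{align*}
and similarly for the other two normalized kernels.

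Finally, multiplying by $|u_0(z)|$ and integrating over $D$ yields
\begin{align*}
\left|\frac{P_t^D u_0(y_1)}{G_D(x_0,y_1)} - \frac{P_t^D u_0(y_2)}{G_D(x_0,y_2)}\right| \leq C\,\|u_0\|_{L^1(D)}\,|y_1-y_2|^\beta,
\end{align*}
with analogous estimates for the other two ratios. This is the desired Hölder continuity. The entire argument is a one-step reduction to Theorem~\ref{th:modulus}, and there is no substantive obstacle: the uniformity of the Hölder constant in the spatial variable, already built into Theorem~\ref{th:modulus}, is exactly what is needed to move the bound past the $L^1(D)$-integration against $u_0$.
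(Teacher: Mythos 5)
Your proposal is correct and is exactly the argument the paper intends: the corollary is stated as an immediate consequence of Theorem~\ref{th:modulus}, and your reduction via the symmetry $p_t^D(y,z)=p_t^D(z,y)$ together with the fact that the H\"older constants in Theorem~\ref{th:modulus} are uniform in the spatial argument and in $t\in[T_1,T_2]$ is precisely what makes the bound survive the integration against $u_0\in L^1(D)$. (Note only that you correctly pair $\eta^{x_0}$ with the restricted set $\overline{D}\setminus B(x_0,r_1)$, as in the corollary and in the proof of Theorem~\ref{th:modulus}, rather than following the order in which the sets are listed in the statement of that theorem.)
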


Theorem~\ref{th:modulus} and Corollary~\ref{cor:semigroup} can be viewed as analogues of the boundary regularity result for $C^{1,1}$ open sets by Fern\'andez-Real and Ros-Oton \cite[Theorem~1.1 (b)]{MR3462074}, see also \cite{MR3626038}. However, such regularity results for nonlocal equations are quite scarce for Lipschitz and less regular domains. That is, much is known about harmonic functions \cite{MR1438304, MR2365478, MR1991120}, but the first result for the Poisson equation ($\Delta^{\alpha/2}u = -f$) appeared only recently in the work of Borthagaray and Nochetto \cite{MR4530901}, who proved optimal Besov regularity of solutions. For regularity results in $C^{1,\gamma}$ domains with $\gamma\in(0,1)$, see, e.g., Abels and Grubb \cite{MR4578319} or Dong and Ryu \cite{2023arXiv230905193D} and the references therein.

Incidentally, our proof of Theorem~\ref{th:modulus} unveils the following integral estimate for the Green function.
\begin{theorem}\label{lem:lpmodulus}
Recall that $D\subset \Rd$ is open, bounded, and Lipschitz with localization radius $r_0$, Lipschitz constant $\lambda$, and reference point $x_0$.
	Let $r>0$. There exists $p_0 = p_0(d,\alpha,\unD,r)>1$ and constants $C\in (0,\infty)$ and $\sigma\in (0,1]$ depending only on $d,\alpha,\unD,p,r$, such that for all $p\in[1,p_0)$,
	\begin{align*}
		\bigg\|\frac{G_D(y,\cdot)}{G_D(x_0,y)} - \frac{G_D(y',\cdot)}{G_D(x_0,y')}\bigg\|_{L^p(D)} \leq C|y-y'|^{\sigma},\quad y,y'\in \overline{D}\setminus B(x_0,r).
	\end{align*}
\end{theorem}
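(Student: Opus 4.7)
The plan is to reduce the $L^p$ statement to a pointwise H\"older estimate for Green potentials by duality, and then invoke a boundary regularity result for the fractional Poisson equation.

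Step 1 (Duality and symmetry). Using $G_D(y,z)=G_D(z,y)$, Fubini's theorem, and $L^p$--$L^{p'}$ duality with $1/p+1/p'=1$,
\begin{align*}
\bigg\|\frac{G_D(y,\cdot)}{G_D(x_0,y)}-\frac{G_D(y',\cdot)}{G_D(x_0,y')}\bigg\|_{L^p(D)} = \sup_{\substack{f\in L^{p'}(D)\\ \|f\|_{L^{p'}(D)}\leq 1}} \bigg|\frac{G_D f(y)}{G_D(x_0,y)}-\frac{G_D f(y')}{G_D(x_0,y')}\bigg|,
\end{align*}
where $G_D f(w):=\int_D G_D(w,z)f(z)\,dz$ is the Green potential of $f$ in $D$.

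Step 2 (Boundary H\"older regularity for the Poisson equation). For $f\in L^{p'}(D)$ sufficiently integrable---that is, $p'$ above a threshold $p'_0=p'_0(d,\alpha,\unD,r)$---the potential $u_f:=G_D f$ solves $(-\Delta)^{\alpha/2}u_f=f$ in $D$ with $u_f=0$ on $D^c$, and the quotient $u_f/G_D(x_0,\cdot)$ extends H\"older continuously to $\overline{D}\setminus B(x_0,r)$. Quantitatively, I aim for $\sigma\in(0,1]$ and $C<\infty$ (depending only on $d,\alpha,\unD,p',r$) such that
\begin{align*}
\bigg|\frac{u_f(y)}{G_D(x_0,y)}-\frac{u_f(y')}{G_D(x_0,y')}\bigg|\leq C\,\|f\|_{L^{p'}(D)}\,|y-y'|^{\sigma},\qquad y,y'\in\overline{D}\setminus B(x_0,r).
\end{align*}
This is precisely the boundary regularity advertised in the introduction (``the ratio of the solution and the Green function is H\"older continuous up to the boundary''), which is the workhorse behind Theorems~\ref{Main1} and~\ref{th:modulus}. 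Inserting this pointwise bound into Step 1 and taking the supremum over admissible $f$ yields the claimed estimate with $p_0=p'_0/(p'_0-1)>1$.

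Main obstacle. The crux is Step 2: getting quantitative H\"older control of $u_f/G_D(x_0,\cdot)$ up to $\partial D$ in a merely Lipschitz set. The delicacy comes from the fact that in Lipschitz open sets $G_D$ has genuinely richer boundary behavior than in the $C^{1,1}$ regime (cf.\ Jakubowski~\cite{MR1991120}), so one cannot reduce to a simple $\delta_D^{\alpha/2}$-factor as in~\cite{MR4462819}. The strategy is to combine (a) the boundary Harnack principle for the fractional Laplacian in Lipschitz open sets, which gives H\"older continuity of ratios of nonnegative harmonic functions vanishing on $D^c$; (b) sharp two-sided estimates of $G_D$; and (c) a decay-of-oscillation argument at the boundary that converts $L^{p'}$ integrability of the source into a modulus of continuity for $u_f/G_D(x_0,\cdot)$. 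Verifying that the resulting threshold $p'_0$ is finite---equivalently, that $p_0>1$---is itself part of the delicate work, and is exactly why the allowed range of $p$ is only guaranteed to lie slightly above~$1$ for general Lipschitz geometries.
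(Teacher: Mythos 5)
There is a genuine gap: your argument is circular in substance. Step 1 (duality via $G_D(y,z)=G_D(z,y)$ and the converse of H\"older's inequality) is correct, but it only restates the theorem in an exactly equivalent dual form: the claimed $L^p$ bound holds if and only if the pointwise bound of your Step 2 holds uniformly over $\|f\|_{L^{p'}(D)}\le 1$. Step 2 is then precisely Corollary~\ref{cor:gdmod}, which in the paper is \emph{deduced from} Theorem~\ref{lem:lpmodulus}, not available as an independent input. You acknowledge Step 2 is ``the crux'' but offer only a list of tools (boundary Harnack, Green function estimates, a decay-of-oscillation argument) without executing any of them, so nothing is actually proved.

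Concretely, two things are missing that the paper's direct proof supplies. First, control of the integral near the singularity $z\approx y$: this requires the quantitative pointwise bound $G_D(y,z)/G_D(x_0,y)\lesssim |y-z|^{\alpha-d}\vee|y-z|^{\alpha-\gamma-d}$ (the content of Lemma~\ref{lem:gdquot}), which also determines the threshold $p_0=d/(d-\alpha+\gamma)$ and hence why $p_0>1$. Second, and more seriously, the boundary Harnack principle only applies to ratios of functions harmonic near the boundary, whereas $G_D(\cdot,z)$ fails to be harmonic at $z$; when $z$ lies between $y$ and $\partial D$ one needs the separate interior oscillation estimate of Lemma~\ref{lem:gdint} (see Figure~\ref{fig:oko}), together with a dyadic decomposition of $D$ around the nearest boundary point and a summation of the resulting series. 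Your sketch does not address either difficulty, so the proposal as written does not constitute a proof.
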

Recall that Green potentials $v(x) = G_Df(x):=\int_D G_D(x,y)f(y)dy$ solve the Dirichlet problem for the Poisson equation:
\begin{align*}
    \begin{cases}(-\Delta)^{\alpha/2} v(x) = f(x),\quad &x\in D,\\
    \hspace{41pt} v(x) = 0,\quad &x\in D^c,
    \end{cases}
\end{align*} see \cite{MR1825645}. Theorem~\ref{lem:lpmodulus} yields a boundary, or relative, H\"older estimate, as follows. 
\begin{corollary}\label{cor:gdmod}
    Let $p>p_0/(p_0-1)$ and let $f\in L^p(D)$. Then, $G_D f(y)/G_D(x_0,y)$ is H\"older continuous in $\in D\setminus B(x_0,r)$ with H\"older constant and exponent depending only on $d,\alpha,\unD,p,r$ and $\|f\|_{L^p(D)}$.
\end{corollary}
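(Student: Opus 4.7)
The plan is to derive this from Theorem~\ref{lem:lpmodulus} by a direct application of Hölder's inequality, exploiting the symmetry of the Green function.

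First, I would observe that by symmetry of $G_D$, for any $y, y' \in \overline{D}\setminus B(x_0,r)$,
\begin{align*}
\frac{G_D f(y)}{G_D(x_0,y)} - \frac{G_D f(y')}{G_D(x_0,y')} = \int_D \left(\frac{G_D(y,z)}{G_D(x_0,y)} - \frac{G_D(y',z)}{G_D(x_0,y')}\right) f(z)\, dz.
\end{align*}
Setting $q = p/(p-1)$, the assumption $p > p_0/(p_0-1)$ is equivalent to $q < p_0$, so Theorem~\ref{lem:lpmodulus} applies with exponent $q$ (and $q \geq 1$ since $p_0 > 1$ forces $p > 1$). By Hölder's inequality,
\begin{align*}
\left|\frac{G_D f(y)}{G_D(x_0,y)} - \frac{G_D f(y')}{G_D(x_0,y')}\right| \leq \|f\|_{L^p(D)} \, \bigg\|\frac{G_D(y,\cdot)}{G_D(x_0,y)} - \frac{G_D(y',\cdot)}{G_D(x_0,y')}\bigg\|_{L^q(D)}.
\end{align*}
Applying Theorem~\ref{lem:lpmodulus} then yields the bound $C\,\|f\|_{L^p(D)}\, |y-y'|^\sigma$ with $C$ and $\sigma \in (0,1]$ depending only on $d,\alpha,\unD,p,r$, which is exactly the claimed Hölder continuity on $\overline{D}\setminus B(x_0,r) \supset D\setminus B(x_0,r)$.

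There is essentially no obstacle here; the corollary is a packaging of Theorem~\ref{lem:lpmodulus} via duality. The only subtleties are (i) verifying that the arithmetic translating $p > p_0/(p_0-1)$ into $q < p_0$ is correct, and (ii) using symmetry of $G_D$ so that the kernel $G_D(y,z)$ (where $y$ is the variable we differentiate in) coincides with $G_D(z,y)$ appearing implicitly in $G_Df(y)$. Both are routine.
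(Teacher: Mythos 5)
Your proof is correct and is precisely the duality argument the paper intends (the corollary is stated without proof, as an immediate consequence of Theorem~\ref{lem:lpmodulus}): the conjugate-exponent arithmetic $q=p/(p-1)<p_0$ is right, and the Hölder-inequality step is the whole content. The only cosmetic remark is that no appeal to symmetry of $G_D$ is needed, since the paper defines $G_Df(y)=\int_D G_D(y,z)f(z)\,dz$ with $y$ already in the first slot.
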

A similar result for $C^{1,1}$ domains was obtained by Ros-Oton and Serra \cite{MR3168912} with explicit and sharp H\"older exponents. Our regularity results are far from being sharp in terms of $p_0$ and $\sigma$, but this is to be expected for Lipschitz sets---some insight about precise boundary behavior can be gained from the results on cones \cite{MR2075671,deblassie1990,MR2213639} or numerical considerations \cite{MR4667264}, but we do not pursue this point here.

 Let us add a few general comments. The mean-value property for fractional caloric functions is important for our development. It was considered before, e.g., by Chen and Kumagai \cite{MR2008600}. Here we focus on the mean-value property in cylinders, which seems adequate for the initial-exterior problem \eqref{eq:FHE}. The advantage of the approach is that from the Ikeda--Watanabe formula we obtain a semi-explicit formula for the Poisson kernel. We also have the following stochastic interpretation: if $u$ satisfies the mean-value property $(0,T)\times D$, then $u(t,x)$ can be recovered from the space-time isotropic $\alpha$-stable process $s\mapsto (t-s,X_s+x)$, which starts from $(t,x)$ at time $s=0$,
 by computing the expectation of $u(t-s,X_s+x)$ at the place of the first exit of the process from $(0,T)\times D$. The exit can occur when $x+X_s$ leaves $D$ before time $t$---in which case the exterior conditions affect the expectation---or when the time coordinate $t-s$ reaches $0$---then the initial condition comes into play. Singular caloric functions start to appear once we assume that the mean-value property is satisfied only on $(0,T)\times U$ for all open (relatively compact sets) $U\subset\subset D$. We refer to the book of Freidlin \cite[Theorem~2.3]{MR833742} for a counterpart of this theory for local operators.

 With a view toward applications in probability, we note that the existence of the limit \eqref{eq:Yaglom} indicates how the isotropic $\alpha$-stable process in $D$, conditioned on surviving at least time 1, behaves near the boundary of $D$. More precisely, it implies the existence of a “Yaglom limit”,
 see Theorem~\ref{Main2} below. Thanks to \eqref{Entrance}, $\eta_{t,Q}(y)$ may be understood as the \textit{entrance law} for the killed process from $Q$ into $D$, see Blumenthal \cite{MR1138461}. This was used in \cite{HAAS20124054, 2018arXiv180408393K} to describe the behavior of the process started from a point on the boundary, e.g., the apex of a cone. Furthermore, the boundary behavior of the heat kernel yields a measure which represents the probability distribution of a rescaled process conditioned on non-extinction.

 Let us now present an outline of the proofs and methods in this paper. In order to prove Theorem~\ref{Main1} we obtain an explicit representation of the survival probability as a Green potential and we show that it behaves like $G_D(x_0,\cdot)$ at the boundary. Then we \textit{approximate} $p_t^D$ by Green potentials and obtain the limit in \eqref{eq:Yaglom} with the help of Prokhorov theorem. To this end, we utilize 
 the uniform integrability of  ratios of Green functions. The proof of Theorem~\ref{lem:lpmodulus} consists in splitting the integral into one region where the boundary Harnack principle can be applied, and another region where we use a technical interior regularity argument adapted to possible singularities of the Green function. In order to prove Theorem~\ref{th:modulus}, we \textit{represent} $p_t^D$ as a Green potential and we apply Theorem~\ref{lem:lpmodulus}. We make use of the spectral theory to show that $p_t^D$ has  regularity necessary for the proof; some ideas here were inspired by \cite{MR4462819}. The boundary measure in the representation of singular caloric functions is obtained from an approximating sequence constructed via the so-called lateral Poisson kernel. Our construction is quite different than the one in \cite{MR4462819}, in particular it does not use the inhomogeneous fractional heat equation.

The structure of the rest of the paper is as follows. Section~\ref{sec:prelim} contains basic definitions and facts. In Section~\ref{sec:Yaglom}, we prove Theorem~\ref{Main1} and its consequences. In Section~\ref{sec:modulus}, we prove Theorems~\ref{lem:lpmodulus} and \ref{th:modulus}. In Section~\ref{sec:caloric}, we introduce the caloric functions and the parabolic Poisson kernel and study their properties. Then in Section~\ref{sec:repr}, we discuss the representation of nonnegative parabolic functions in Lipschitz cylinders.

\section{Preliminaries}\label{sec:prelim}
We assume throughout that the considered sets, measures, and functions are Borel.
For nonnegative functions $f$ and $g$, we write $f(x)\lesssim g(x)$, $x\in A$, if there is a number $C\in (0,\infty)$, referred to as \textit{constant}, such that $f(x)\leq C g(x)$, $x\in A$. We write $C=C(d,\alpha,\ldots)$ if $C$ is a \textit{constant} depending only on $d,\alpha,\ldots$, that is, $C$ may be considered as a function of the parameters $d,\alpha,\ldots$, but not  of $x\in A$. We say that $f$ and $g$ are \textit{comparable} and write $f\approx g$ if $f\lesssim g$ and $g\lesssim f$ (this notation was used in Section~\ref{s.I}). We often use $:=$ and occasionally employ \textit{cursive} for definitions.
\subsection{Geometry} \label{sec:geom}
Let $B(x,r) := \{y\in \mR^d: |y-x| < r\}$. Recall that $D$ is a Lipschitz open set with constant $\lambda\in (0,\infty)$ and localization radius $r_0\in (0,\infty)$. This means that for every $Q\in \partial D$ there is a rigid motion $R_Q$ and a Lipschitz function $f_Q\colon\mR^{d-1}\to \mR$ with Lipschitz constant $\lambda$, such that $R_Q(Q) = 0$ and $ D\cap B(Q,r_0)=R_Q^{-1}(B(0,r_0)\cap \{y_d > f_Q(y_1,\ldots,y_{d-1})\})$.
For $r>0$, we let 
\begin{align}\label{eq:Dn}D_{r} := \{x\in D: \delta_D(x) > 1/r\}.\end{align}
Let $\kappa = 1/(4\sqrt{1+\lambda^2})$. Of course, $\kappa<1$.
For $y\in \overline{D}$ and $r>0$, we define
\begin{align}\label{eq:ardef}
	\mathcal{A}_r(y) :=\begin{cases} \{A\in D: B(A,\kappa r)\subseteq D\cap B(y,r)\},\quad &r\leq r_0/2,\\
    \{x_0\},\quad &r>r_0/2.\end{cases}
\end{align}
\begin{lemma}
If $D$ is Lipschitz, then $\mathcal{A}_r(y)$ is nonempty for every $r>0$ and $y\in \overline{D}$.
\end{lemma}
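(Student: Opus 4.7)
The claim splits cleanly by the value of $r$. When $r > r_0/2$ the definition gives $\mathcal{A}_r(y) = \{x_0\}$, which is nonempty by the standing assumption $x_0 \in D$, so nothing needs proving. From now on the plan is to treat $r \leq r_0/2$ by splitting further on how close $y$ lies to $\partial D$.

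First, if $y \in D$ satisfies $\delta_D(y) \geq \kappa r$, I would just take $A := y$: since $\kappa < 1$, we have $B(y, \kappa r) \subseteq B(y, r)$ and $B(y, \kappa r) \subseteq B(y, \delta_D(y)) \subseteq D$, so $A \in \mathcal{A}_r(y)$. The interesting case is $\delta_D(y) < \kappa r$, which includes $y \in \partial D$. Pick $Q \in \partial D$ with $|Q - y| = \delta_D(y)$ (so $Q = y$ when $y \in \partial D$) and work in the Lipschitz chart at $Q$: there is a rigid motion $R_Q$ sending $Q$ to $0$ and a Lipschitz function $f = f_Q$ with $f(0) = 0$ and Lipschitz constant $\lambda$ such that, inside $B(0, r_0)$, being in $D$ means lying strictly above the graph of $f$. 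Set
\[
	a := (1 + \lambda) \kappa r, \qquad A := R_Q^{-1}(0, \dots, 0, a),
\]
i.e., the point lifted along the vertical axis of the chart above $Q$ by the amount $a$.

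It then remains to verify $B(A, \kappa r) \subseteq D \cap B(y, r)$. For the $D$-containment, any $z \in B(A, \kappa r)$ has chart coordinates $(z', z_d)$ with $|z'| < \kappa r$ and $z_d > a - \kappa r = \lambda \kappa r$, while $|f(z')| \leq \lambda |z'| < \lambda \kappa r$, so $z_d > f(z')$; the ambient requirement $|z| < r_0$ is immediate from $r \leq r_0/2$ and $\kappa \leq 1/4$. For the $B(y, r)$-containment, the triangle inequality yields
\[
	|z - y| \leq |z - A| + |A - Q| + |Q - y| < \kappa r + a + \kappa r = (3 + \lambda)\kappa r,
\]
so I need $(3+\lambda)\kappa \leq 1$. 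The only nontrivial step, and what I expect to be the main (purely algebraic) obstacle, is confirming that the prescribed value $\kappa = 1/(4\sqrt{1+\lambda^2})$ actually satisfies this: squaring reduces it to $(3+\lambda)^2 \leq 16(1+\lambda^2)$, equivalently $15\lambda^2 - 6\lambda + 7 \geq 0$, which holds for all $\lambda \geq 0$ since its discriminant is negative. Once this inequality is in hand, both containments close and $A \in \mathcal{A}_r(y)$, completing the argument.
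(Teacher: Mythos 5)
Your proof is correct and follows essentially the same route as the paper's: the trivial case $r>r_0/2$, taking $A=y$ when $\delta_D(y)\geq \kappa r$, and otherwise constructing $A$ above the nearest boundary point $Q$ using the Lipschitz geometry (the paper phrases this via the interior cone of angle $\arccot(\lambda)$ at $Q$, which is exactly the chart computation you carry out explicitly). Your algebraic verification that $(3+\lambda)\kappa\leq 1$ for $\kappa=1/(4\sqrt{1+\lambda^2})$ is a valid, more explicit substitute for the paper's appeal to the cone at scale $r/2$.
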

\begin{proof}
    Obviously, it suffices to consider $r\leq r_0/2$. For $y\in \partial D$ the statement is true even with $\kappa$ replaced by $2\kappa = 1/(2\sqrt{1+\lambda^2})$. Indeed, if we consider the \textit{interior right-circular} cone with angle $\arccot(\lambda)$
    and vertex at $y$,
then the point $A\in D$ on the axis of the cone such that $|A-y|=r$ satisfies $B(A,r/(2\sqrt{1+\lambda^2})) \subseteq D\cap B(y,r)$. If $y\in D$ and $y\notin \mathcal{A}_r(y)$, then there is $Q\in \partial D$ with $|y-Q| = \delta_D(y) < \kappa r$ and $A\in D$ with
    \begin{align*}
        B(A,r/(4\sqrt{1+\lambda^2})) \subseteq D\cap B(Q,r/2)\subseteq D\cap B(y,r).
    \end{align*}
\end{proof}
Thus, by definition (see, e.g., \cite{MR2722789}), $D$ is $\kappa$-fat at each scale $r\in(0,r_0/2)$. We will denote by $A_r(y)$ an arbitrary point in $\mathcal{A}_r(y)$. The actual choice is unimportant in the sense  that if $A_1,A_2\in \mathcal{A}_r(y)$ and $u\ge 0$ is harmonic in $B(A_1,\kappa r)$ and $B(A_2,\kappa r)$---see Definition~\ref{def:harm} below---then we have the comparability $C^{-1}u(A_1)\le u(A_2)\le C u(A_1)$, where $C=C(d,\alpha)$; see the Harnack inequality in \cite[Lemma~1]{MR1703823}, see also \cite[Lemma 4.4]{MR1825645}.

For $x,y\in D$, let $r_{x,y} := |x-y|\vee \delta_D(x)\vee\delta_D(y)$. Let 
$\mathcal{A}_{x,y} := \{x_0\}$ if $r_{x,y}>r_0/32$, and otherwise let 
	\begin{align*}\mathcal{A}_{x,y} := \{A\in D: B(A,\kappa r_{x,y}) \subset D\cap B(x,3r_{x,y})\cap B(y,3r_{x,y})\}.
	\end{align*}
Then, $\mathcal A_{x,y}$ is nonempty, \cite{MR1991120}. We denote by $A_{x,y}$ any point in $\mathcal{A}_{x,y}$. The actual choice is unimportant in the sense that under suitable assumptions on functions $u\ge 0$, there exists $C = C(d,\alpha,\unD)$ such that for all $A_1,A_2\in \mathcal{A}_{x,y}$,
$C^{-1}u(A_1)\leq u(A_2)\leq C u(A_1)$. See Remark~\ref{r.c}, following \eqref{eq:GDest}.


\subsection{Potential theory}
As stated in the introduction, we denote by $(X_{t}, \mP^{x})$ the standard rotation invariant $\alpha$-stable L\'{e}vy process in $\mathbb{R}^{d}$. The process is determined by the jump measure with density function
\begin{equation} \label{JumpMeasure}
\nu(y) = \frac{2^{\alpha} \Gamma ( (d + \alpha) / 2 )}{\pi^{d/2} | \Gamma ( -\alpha / 2 ) |} |y|^{-d -\alpha} =: c_{d,\alpha}|y|^{-d -\alpha}, \quad y \in \mathbb{R}^{d}.
\end{equation}
It is a process with independent and stationary increments and characteristic function $\mE^{x} e^{i \langle \xi, X_{t} - x \rangle} = e^{-t | \xi |^{\alpha}}$, $t>0$, $x,\xi \in \mathbb{R}^{d}$.
It is strong Markov with the following time-homogeneous transition probability
\begin{equation*}
P_{t}(x,A) := \int_{A} p_{t}(x,y)\, dy, \quad t > 0, \ x \in \mathbb{R}^{d}, \ A \subseteq \mathbb{R}^{d}.
\end{equation*}
Here $p_{t}(x,y) := p_{t} (x - y)$ and $p_{t}$ is the smooth real-valued function on $\mathbb{R}^{d}$ with the Fourier transform:
\begin{equation}
\int_{\mathbb{R}^{d}} p_{t}(x) e^{i \langle x, \xi \rangle} \, dx = e^{-t |\xi|^{\alpha}}, \quad \xi \in \mathbb{R}^{d}. \label{alpha}
\end{equation}
The associated semigroup of operators acts on, e.g., $u\in C_0(\mR^d)$ as follows:
\begin{equation*}
P_t u(x) := \int_{\mR^d} u(y) p_t(x,y) \, dy, \quad x\in\mR^d,\ t\geq 0.
\end{equation*}
We have the following scaling property as a consequence of \eqref{alpha}:
\begin{equation}
p_{t}(x) = t^{-d/\alpha} p_{1} ( t^{-1/\alpha} x ), \quad x \in \mathbb{R}^{d}, \ t > 0. \label{scaling}
\end{equation}
Furthermore, there exists a constant $c$ such that
\begin{equation}
c^{-1}\bigg( t^{-d/\alpha} \wedge \frac{t}{|x|^{d + \alpha}} \bigg) \leq p_{t}(x) \leq c \bigg( t^{-d/\alpha} \wedge \frac{t}{|x|^{d + \alpha}} \bigg), \quad x \in \mathbb{R}^{d}, \ t > 0, \label{eq:HKest}
\end{equation}
see, e.g., \cite{MR119247, MR2013738}.
Thus, in short,
\begin{equation}
p_{t}(x) \approx t^{-d/\alpha} \wedge \frac{t}{|x|^{d + \alpha}},  \quad x \in \mathbb{R}^{d}, \ t > 0. \label{DensityApprox}
\end{equation}

 Recall that $\tau_D$ is the first exit time from $D$ defined in \eqref{fet}. If $D$ is bounded, then $\tau_{D} < \infty$ almost surely, see, e.g., Pruitt~\cite{pruitt1981}. The Dirichlet heat kernel $p_{t}^{D}(x,y)$ of $D$ is defined by the Hunt's formula:
\begin{equation}\label{eq:dhkdef}
p_{t}^{D} (x,y) = p_{t}(x,y) - \mE^{x} \big[p_{t - \tau_{D}} ( X_{\tau_{D}}, y)\semicol \tau_{D} < t  \big],
\end{equation}
where $x, y \in \mathbb{R}^{d}$ and $t > 0$. Here, as usual,
\begin{equation}
\mE^{x} \big[p_{t - \tau_{D}} ( X_{\tau_{D}}, y)\semicol \tau_{D} < t \big] := \int_{\{ \tau_{D} < t \}} p_{t - \tau_{D}} ( X_{\tau_{D}}, y )\, d\mP^{x}. \nonumber
\end{equation}
Since $D$ is Lipschitz, it satisfies the exterior cone condition. Therefore, $\mP^{x} ( \tau_{D} = 0) = 1$ for all $x \in D^{c}$ by Blumenthal's zero-one law. In particular $p_{t}^{D}(x,y) = 0$ when $x$ or $y$ are outside of $D$.
For bounded or nonnegative functions $f$ we define
\begin{equation*}
P_{t}^{D} f (x) := \mE^{x} \big[f ( X_{t})\semicol \tau_D>t \big] = \int_{\mathbb{R}^{d}} f(y) p_{t}^{D} (x,y)\, dy,
\end{equation*}
see  \cite[Section~2]{MR1329992}. We also note that
\begin{equation}
0 \leq p_{t}^{D} (x,y) = p_{t}^{D} (y,x) \leq p_{t}(y - x) \nonumber
\end{equation}
and $p_{t}$ satisfies the Chapman--Kolmogorov equations:
\begin{equation*}
\int p_{s}^{D}(x,y) p_{t}^{D}(y,z)\, dy = p_{t + s}^{D} (x,z), \quad s, t > 0, \ x, z \in \mathbb{R}^{d},
\end{equation*}
see  \cite{MR2602155, MR2677618}.
The following scaling property follows from \eqref{scaling},
\begin{equation}
p_{t}^{D}(x,y) = t^{-d/\alpha} p_{1}^{t^{-1/\alpha} D} ( t^{-1/\alpha} x, t^{-1/\alpha} y ), \quad x, y \in \mathbb{R}^{d}, \ t > 0. \label{HeatScaling}
\end{equation}
By \cite[Theorem~1]{MR2722789}, for every $T>0$ we have the \textit{approximate factorization}:
\begin{equation}
p_{t}^{D} (x,y) \approx \mP^{x} (\tau_{D} > t )  p_{t}(x,y)\mP^{y} ( \tau_{D} > t ) , \quad x,y \in D,\ t\in (0,T). \label{factorization}
\end{equation}
If $D$ is (open, bounded, and)  $C^{1,1}$, then the estimate takes on a more explicit form \cite{MR2677618}:
\begin{equation}\label{eq:CKS}
p_t^D(x,y) \approx \bigg(1\wedge \frac{\delta_D(x)^{\alpha/2}}{\sqrt{t}}\bigg)p_t(x,y)\bigg(1\wedge \frac{\delta_D(y)^{\alpha/2}}{\sqrt{t}}\bigg),\quad x,y \in D,\ t\in (0,T).
\end{equation}
We also recall the large time estimates. Let $\lambda_1 = \lambda_1(D) >0$ be the first eigenvalue and $\varphi_1$ the first eigenfunction of the Dirichlet fractional Laplacian on $D$, see Section~\ref{sec:spect} below for more details. By the intrinsic ultracontractivity due to Kulczycki \cite{MR1643611}, for every $T>0$ we have
\begin{equation}\label{eq:largetimesgen}
	p_t^D(x,y) \approx e^{-\lambda_1 t}\varphi_1(x)\varphi_1(y),\quad x,y\in D,\ t\in (T,\infty).
\end{equation}
If $D$ is (open, bounded, and)  $C^{1,1}$, then we even have 
\begin{equation}\label{eq:largetimes}
p_t^D(x,y) \approx e^{-\lambda_1 t}\delta_D(x)^{\alpha/2}\delta_D(y)^{\alpha/2},\quad x,y\in D,\ t\in (T,\infty),
\end{equation}
see \cite[Theorem 1.1 (ii)]{MR2677618}.
We define the \textit{killing intensity} of $X$ on $D$ as
\begin{equation*}
\kappa_{D}(z):= \int_{D^{c}} \nu(z - y)\, dy,\quad z\in D.
\end{equation*}
 By \cite[Theorem 31.5]{MR1739520}, $\Delta^{\alpha/2}$ coincides with the generator of $X_t$ for the class $C^2_c(\mR^d)$ of real-valued twice continuously differentiable functions with compact support in $\mR^d$.

The \textit{Green function} of $D$ is given by the formula:
\begin{equation*}
G_{D}(x,y) := \int_{0}^{\infty} p_{t}^{D} (x,y)\, dt, \quad x, y \in \mathbb{R}^{d}.
\end{equation*}
In particular, $G_D(x,y) = 0$ if either $x \in D^{c}$ or $y\in D^c$. We note that $G_D$ is finite for all $x\neq y$ and by \eqref{eq:dhkdef}, $G_D(x,y)\leq G_{\mR^d}(x,y)= c|x-y|^{\alpha-d}$. For further reference, we recall the Green function estimates of Jakubowski
	\cite[Theorem~1]{MR1991120}: If we let $$\Phi(x) := G_D(x_0,x)\wedge 1,$$ then there exists $C(d,\alpha,\unD)> 0$ such that
 \begin{align}\label{eq:GDest}
     C^{-1}|x-y|^{\alpha-d}\frac{\Phi(x)\Phi(y)}{\Phi(A_{x,y})^2}\leq G_D(x,y)\leq C|x-y|^{\alpha-d}\frac{\Phi(x)\Phi(y)}{\Phi(A_{x,y})^2}\,,\qquad x,y\in D,
 \end{align}
 see Subsection~\ref{sec:geom} for notation and the following remark.  
 \begin{remark}\label{r.c}
 We note that if $A_1,A_2\in \mathcal A_{x,y}$, then $\Phi(A_1)\approx \Phi (A_2)$; see \cite[Lemma~13]{MR1991120}.
     We also note that \cite{MR1991120}  uses an extra reference point $x_1$ to define $A_{x,y}$ for $r_{x,y}\ge r_0/32$, but the resulting values of $\Phi(A_{x,y})$ are trivially comparable in both settings. In particular, \eqref{eq:GDest} remains true in the present (simplified) setting.
 \end{remark}
 \begin{remark}
     \label{r.por}
It is implicit in \eqref{factorization} and \eqref{eq:largetimesgen} that
$\varphi_1(y)\approx\mP^y(\tau_D>1)$, $y\in D$. Furthermore, by \cite[Theorem~2]{MR2722789}, $\mP^y(\tau_D>1)\approx\mE^y \tau_D$, $y\in D$, and, by \cite[Lemma~17]{MR1991120}, $\mE^y \tau_D\approx \Phi(y)$, $y\in D$. Therefore,
\begin{equation}
    \label{e.por}
\varphi_1(y)\approx\mP^y(\tau_D>1)\approx\mE^y \tau_D\approx \Phi(y),\quad y\in D. 
\end{equation}
In our proofs, we mostly use the survival probability and $\Phi$, but we also refer to results stated in terms of $\varphi_1$ and the expected exit time. 
 \end{remark}
We define the Green operator (or Green potential)
\begin{equation*}
( G_{D} f ) (x) := \int_{D} G_{D}(x,y) f(y)\, dy,\quad x\in \mR^d,
\end{equation*}
for integrable or nonnegative functions $f$. For $f\in L^1(D)$, the function $u:=G_Df$ is a distributional solution of $(-\Delta)^{\alpha/2} u = f$ in $D$, see \cite[Proposition~3.13]{MR1825645}.

\begin{definition}\label{def:harm}
Let $u \geq 0$ be a Borel measurable function on $\mathbb{R}^{d}$.
\begin{itemize}
\item We say that $u$ is $\alpha${\it-harmonic} in an open set $V \subseteq \mathbb{R}^{d}$ if for every open (relatively compact) $B \subset\subset D$, 
$$
u(x) = \mE^{x} u ( X_{\tau_{B}} ) < \infty, \quad x \in B.
$$
\item  We say that $u$ is \textit{regular $\alpha$-harmonic} in $D \subset \mathbb{R}^{d}$ if
$$
u(x) = \mE^{x} u ( X_{\tau_{D}} ) < \infty, \quad x \in D.
$$
\item  We say that $u$ is \textit{singular $\alpha$-harmonic} in $D \subset \mathbb{R}^{d}$, if $u$ is $\alpha$-harmonic in $D$ and $u = 0$ on $D^{c}$.
\end{itemize}
\end{definition}

We will often write `harmonic' instead of `$\alpha$-harmonic'. Since $\tau_{B} \leq \tau_{V}$ for $B \subset V$, by the strong Markov property it follows that regular harmonic functions are harmonic. Also by the strong Markov property, $G_D(\cdot, y)$ is harmonic in $D\setminus \{y\}$, see \cite[Theorem 2.5]{MR1329992} or \cite[(2.1)]{MR1750907}.

For $x \in \mathbb{R}^{d}$, the $\mP^{x}$-distribution of $X_{\tau_{D}}$ is called the \textit{$\alpha$-harmonic measure}, denoted by $\omega^{x}_{D}$. This measure is concentrated on $D^{c}$ and for $u$ regular harmonic in $D$,  we have
\begin{equation*}
u(x) = \int_{D^{c}} u(z)\, \omega_{D}^{x} (dz), \quad x \in D.
\end{equation*}

The $\alpha$-harmonic measure of a Lipschitz open set is absolutely continuous with respect to the Lebesgue measure. Its density function is given by the \textit{Poisson kernel}:
\begin{equation}\label{eq:IWbasic}
P_D(x,z) := \int_D G_D(x,y)\nu(y,z)\, dy,\quad x\in D,\, z\in D^c,
\end{equation} see \cite[Lemma~6]{MR1438304}. Therefore, for every regular harmonic $u$ we have the representation
\begin{equation*}
u(x) = \int_{D^{c}} P_{D}(x,z) u(z)\, dz, \quad x \in D.
\end{equation*}
We also recall the Ikeda--Watanabe formula from \cite{ikeda1962}:
\begin{equation}\label{eq:IW}
	\mP^{x} \big[ \tau_{D} \in I, X_{\tau_{D}-} \in A, X_{\tau_{D}} \in B \big] = \int_{I} \int_{B} \int_{A} \nu(y,z)p_{u}^{D} (x,dy) \, dz\, du,
\end{equation}
where $I \subset (0,\infty)$, $A \subset D$, and $B \subset ( \overline{D} )^{c}$. See also \cite[Lemma~1]{MR2345912}, \cite{MR1438304}, \cite[(4.13)]{MR3737628}, or \cite[Theorem~2.4]{MR2255353}.

Recall that $x_{0} \in D$ is an arbitrary but fixed (reference) point. We define the \textit{Martin kernel}, $M_{D}^{x_{0}}(y, Q)$ as follows: for every $Q \in \partial D$ and $y \in D$ we let
\begin{equation}
M_{D}^{x_{0}} (y,Q) = \lim_{D\ni x \to Q} \frac{G_{D}(x,y)}{G_{D}(x, x_{0})}. \label{Martin}
\end{equation}
In \cite[Lemma~6]{MR1704245} it is shown that the Martin kernel exists, the mapping $(y, Q) \mapsto M_{D}^{x_{0}}(y,Q)$ is continuous on $D \times \partial D$, and for every $Q \in \partial D$ the function $M_{D}^{x_{0}}(\cdot, Q)$ is singular $\alpha$-harmonic in $D$.

\subsection{Auxiliary results on $P_t^D$ and its spectral decomposition}\label{sec:spect}
We recall that the operators $P_t^D$ are compact on $L^2(D)$, see, e.g., \cite[Chapter~4]{MR2569321}. Therefore there exist a nondecreasing sequence of nonnegative numbers $\lambda_n$ diverging to infinity and an orthonormal sequence of functions $\varphi_n\in C_0(D)$ such that for every $\phi \in L^2(D)$, we have
\begin{align}\label{eq:ptdspec}
	P_t^D\phi(x) = \sum\limits_{n=1}^\infty e^{-\lambda_n t} \langle \phi,\varphi_n\rangle \varphi_n(x)
\end{align}
and
\begin{align}\label{eq:dhkspec}
	p_t^D(x,y) =  \sum\limits_{n=1}^\infty e^{-\lambda_n t} \varphi_n(x)\varphi_n(y),\quad x,y\in D,\; t>0.
\end{align}
The fractional Weyl bounds \cite{MR107298,MR3390410} read
\begin{align}\label{eq:Weyl}
	\lambda_n \approx n^{\alpha/d}.
\end{align}
Note that $P_t^D\varphi_n = e^{-\lambda_n t} \varphi_n$ pointwise. Therefore,
\begin{align}\label{eq:GDvarphi}
	G_D \varphi_n = \int_0^\infty P_t^D \varphi_n \, dt = \lambda_n^{-1} \varphi_n.
\end{align}
By iterating \eqref{eq:GDvarphi} and using the regularity results for the fractional Laplacian \cite{MR3482695,MR4194536}, we find that $\varphi_n$ are smooth in $D$.  Furthermore, by \cite[Proposition~3.1]{MR3462074}, there exist $C>0$ and $w\geq 1$, such that
\begin{align}\label{eq:eigenbound}
	\|\varphi_n\|_{\infty} \leq C \lambda_n^{w-1},\quad n\in\mathbb{N}.
\end{align}
We say that $\phi$ belongs to $D(L^D)$, the domain of the $L^2$-generator of $P_t^D$, if the following limit exists in~$L^2$:
\begin{align*}
	L^D \phi := \lim\limits_{t\to 0^+} \frac{P_t^D\phi - \phi}{t}.
\end{align*}
Furthermore, if the limit exists for a function $\phi$ and some $x\in D$, we denote it as $L^D \phi(x)$.
\begin{lemma}\label{lem:spec}
	\begin{enumerate}[font=\normalfont]
		\item We have $\varphi_n\in D(L^D)$ and $L^D\varphi_n (x)= -\lambda_n \varphi_n(x)$ for all $x\in D$.
		\item We have \begin{align*}
			A := \{\phi\in L^2(D): \sum\limits_{n=1}^\infty \lambda_n^2 |\langle\phi,\varphi_n\rangle|^2 < \infty \} \subseteq D(L^D),
		\end{align*}
		and for each $\phi\in A$,
		\begin{align*}
			L^D\phi = \sum\limits_{n=1}^\infty \lambda_n \langle \phi,\varphi_n\rangle \varphi_n.
		\end{align*}
		\item For every $y\in D$ and $t>0$, $p_t^D(\cdot,y) \in A$. \item For every $x,y\in D$, we have $L^D_x p_t^D(x,y) = \Delta^{\alpha/2}_x p_t^D(x,y)$.
	\end{enumerate}
\end{lemma}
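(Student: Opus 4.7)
Parts (1) and (2) are standard arguments from the spectral/semigroup theory of the compact self-adjoint operator $P_t^D$. For (1), I would use the already-noted pointwise identity $P_t^D\varphi_n = e^{-\lambda_n t}\varphi_n$ to write $(P_t^D\varphi_n(x)-\varphi_n(x))/t = ((e^{-\lambda_n t}-1)/t)\varphi_n(x)$, which converges to $-\lambda_n \varphi_n(x)$ both in $L^2$ and pointwise. For (2), expanding $\phi = \sum_n c_n \varphi_n$ with $c_n = \langle\phi,\varphi_n\rangle$ and $\sum_n \lambda_n^2 c_n^2 < \infty$, I would apply Parseval to reduce the claim to showing
\begin{equation*}
\sum_n \Big(\tfrac{e^{-\lambda_n t}-1}{t} + \lambda_n\Big)^{\!2} c_n^2 \xrightarrow[t\to 0^+]{} 0,
\end{equation*}
which follows by dominated convergence from the elementary bound $|(e^{-\lambda_n t}-1)/t| \leq \lambda_n$.

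For (3), self-adjointness of $P_t^D$ identifies the Fourier coefficients of $p_t^D(\cdot,y)$ as
\begin{equation*}
\langle p_t^D(\cdot,y),\varphi_n\rangle = \int_D p_t^D(x,y)\varphi_n(x)\,dx = P_t^D\varphi_n(y) = e^{-\lambda_n t}\varphi_n(y),
\end{equation*}
and the eigenfunction $L^\infty$-bound \eqref{eq:eigenbound} combined with the Weyl asymptotics \eqref{eq:Weyl} give $\sum_n \lambda_n^2 |\langle p_t^D(\cdot,y),\varphi_n\rangle|^2 \lesssim \sum_n \lambda_n^{2w}e^{-2\lambda_n t} < \infty$.

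For (4), I would identify both $L^D_x p_t^D(x,y)$ and $\Delta^{\alpha/2}_x p_t^D(x,y)$ with the pointwise $t$-derivative of $p_t^D(x,y)$. Chapman--Kolmogorov gives $P_s^D p_t^D(\cdot,y)(x) = p_{t+s}^D(x,y)$, so $L^D_x p_t^D(x,y) = \partial_t^+ p_t^D(x,y)$; the series \eqref{eq:dhkspec} converges absolutely and uniformly on compact $t$-intervals by \eqref{eq:eigenbound}--\eqref{eq:Weyl}, so termwise differentiation yields $\partial_t p_t^D(x,y) = -\sum_n \lambda_n e^{-\lambda_n t}\varphi_n(x)\varphi_n(y)$. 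On the other hand, applying $(-\Delta)^{\alpha/2}$ to the identity $G_D\varphi_n = \lambda_n^{-1}\varphi_n$ of \eqref{eq:GDvarphi} yields $\Delta^{\alpha/2}\varphi_n = -\lambda_n\varphi_n$ pointwise in $D$. The main obstacle is to commute $\Delta^{\alpha/2}_x$ with the spectral sum representing $p_t^D(\cdot,y)$. For this I would use the absolutely convergent symmetric form
\begin{equation*}
\Delta^{\alpha/2}\varphi_n(x) = \tfrac12\int_{\mR^d}\bigl(\varphi_n(x+h) + \varphi_n(x-h) - 2\varphi_n(x)\bigr)\nu(h)\,dh
\end{equation*}
(valid for $\alpha < 2$ and $\varphi_n$ bounded and $C^2$ near $x$), split the $h$-integral at $|h| = r < \delta_D(x)$, control the near part by the second-order Taylor remainder together with the interior bound $\|\varphi_n\|_{C^2(B(x,r))} \lesssim \lambda_n^{w''}$ obtained by iterating the Schauder-type estimates for $(-\Delta)^{\alpha/2}$ referenced after \eqref{eq:GDvarphi}, and bound the far part by $\|\varphi_n\|_\infty \lesssim \lambda_n^{w-1}$. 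In both regimes the polynomial factor in $\lambda_n$ is absorbed by $e^{-\lambda_n t}$ through \eqref{eq:Weyl}, so Fubini applies and produces $\Delta^{\alpha/2}_x p_t^D(x,y) = -\sum_n \lambda_n e^{-\lambda_n t}\varphi_n(x)\varphi_n(y) = L^D_x p_t^D(x,y)$.
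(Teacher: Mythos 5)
Your parts (1)--(3) are essentially the paper's argument with the omitted details supplied: the paper merely asserts that (1) and (2) "follow quite easily" from \eqref{eq:ptdspec} and \eqref{eq:Weyl}, and your dominated-convergence computation based on $|(e^{-\lambda_n t}-1)/t|\le\lambda_n$ is the intended filling-in, while your (3) is the paper's computation (with its typo $\varphi_m(x)$ corrected to $\varphi_m(y)$). Part (4) is where you take a genuinely different route. The paper never computes either side as a spectral series: it decomposes $p_t^D(\cdot,y)=\phi+g$ with $\phi\in C^2_c(B(x,\delta_D(x)/2))$ and $g$ vanishing near $x$, handles $g$ via the pointwise limit $p_s^D(x,z)/s\to\nu(x,z)$ with the uniform domination \eqref{eq:ptt} away from the diagonal, and handles $\phi$ via Sato's identification of the free generator on $C_0(\mR^d)$ together with $|P_s^D\phi(x)-P_s\phi(x)|/s\lesssim\mP^x(\tau_D<s)\to0$. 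You instead identify $L^D_x p_t^D(x,y)$ with $\partial_t p_t^D(x,y)$ through Chapman--Kolmogorov and then commute $\Delta^{\alpha/2}_x$ with the eigenfunction expansion using $\Delta^{\alpha/2}\varphi_n=-\lambda_n\varphi_n$ termwise. Both routes work. The paper's approach needs no quantitative regularity of the eigenfunctions, only that $p_t^D(\cdot,y)\in C^2(D)\cap C_c(\mR^d)$; yours yields the explicit series for $L^D_x p_t^D$ as a byproduct (which the paper anyway rederives for Corollary~\ref{cor:lptd}), but at the cost of the interior bound $\|\varphi_n\|_{C^2(B(x,r))}\lesssim\lambda_n^{w''}$. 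That is the one step you should not treat as automatic: the paper records only qualitative smoothness of $\varphi_n$, so you must actually run the interior Schauder iteration for $(-\Delta)^{\alpha/2}\varphi_n=\lambda_n\varphi_n$ quantitatively, tracking the factor of $\lambda_n$ gained at each of the finitely many steps and the global term $\|\varphi_n\|_{L^\infty(\mR^d)}\lesssim\lambda_n^{w-1}$ that nonlocal interior estimates require; this is standard but not free. Finally, your sign $L^D\phi=-\sum_n\lambda_n\langle\phi,\varphi_n\rangle\varphi_n$ in (2) is the consistent one --- the formula displayed in part (2) of the statement has a sign slip relative to part (1).
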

\begin{proof}
	Statements (1) and (2) follow quite easily from \eqref{eq:ptdspec} and \eqref{eq:Weyl}. In order to prove (3), we first let $m\in \mathbb{N}$. Then, by \eqref{eq:dhkspec} and \eqref{eq:eigenbound},
	\begin{align*}
		|\langle p_t^D(\cdot,y),\varphi_m\rangle| = |e^{-\lambda_m t}\varphi_m(x)| \leq e^{-\lambda_m t} \|\varphi_m\|_{\infty} \leq Ce^{-\lambda_m t} \lambda_m^{w-1}.
	\end{align*}
	Using \eqref{eq:Weyl}, we get (3).
	
	 We now prove (4). Note that $x\mapsto p_t^D(x,y)\in C^2(D)\cap C_c(\mR^d)$. Let $\phi \in C^2_c(B(x,\delta_D(x)/2))$ (extended by 0 to the whole of $\mR^d$) and $g\in C_c(\mR^d)$ be such that $\phi(x) + g(x) = p_t^D(x,y)$ and $g(x) = 0$ on $B(x,\delta_D(x)/4)$. Note that by \eqref{DensityApprox},
	\begin{align}\label{eq:ptt}
		\frac{p_t^D(x,z)}{t} \leq \frac{p_t(x,z)}{t} \lesssim \nu(x,z),
	\end{align} 
	which for $|x-z|>\delta_D(x)/4$ is uniformly bounded.
	Furthermore, since $p_t(x,z)/t \to \nu(x,z)$ for all $x,z\in\mR^d$, $x\ne z$, by \eqref{eq:dhkdef} we find that for fixed $z\in D\setminus \{x\}$,
	\begin{align*}
		\lim\limits_{t\to 0^+} \frac{p_t^D(x,z)}{t} = \nu(x,y)  +  \lim\limits_{t\to 0^+} \frac 1t \mE^x[p_{t-\tau_D}(X_{\tau_D},z)\semicol \tau_D<t].
	\end{align*}
	Since $x$ and $z$ are fixed we have $p_{t-\tau_D}(X_{\tau_D},z) \lesssim t$, so the limit on the right hand side is equal to 0, hence $p_t^D(x,z)/t\to \nu(x,z)$ as well. By this, \eqref{eq:ptt}, and the dominated convergence theorem, we get $\Delta^{\alpha/2}g(x) = L^Dg(x)$.
	
	Let $L$ be the $C_0(\mR^d)$-generator of the semigroup induced by $p_t$. By Sato \cite[Theorem~31.5]{MR1739520}, we have $\Delta^{\alpha/2} \phi(x) = L\phi(x)$. Therefore,
	\begin{align*}
		L^D\phi(x) = \Delta^{\alpha/2}\phi(x) + \lim\limits_{t\to 0^+} \frac{P_t^D\phi(x) - P_t\phi(x)}{t}.
	\end{align*}
	We will show that the last limit exists and is equal to 0. By \eqref{eq:dhkdef}, Fubini--Tonelli, and the fact that $X_{\tau_D}\in D^c$ almost surely,
	\begin{align*}
		\frac{|P_t^D\phi(x) - P_t\phi(x)|}{t} \leq \|\phi\|_{\infty} \frac 1t \mE^x\bigg[\int_{B(x,\delta_D(x)/2)} p_{t-\tau_D}(X_{\tau_D},z)\, dz\semicol  \tau_D<t\bigg]\lesssim \mP^x(\tau_D<t) \mathop{\longrightarrow}\limits^{t\to 0^+} 0.
	\end{align*}
	By collecting the above results we find that
	\begin{align*}
		\Delta^{\alpha/2}_x p_t^D(x,y) = 	\Delta^{\alpha/2}\phi(x) + 	\Delta^{\alpha/2} g(x) = L^D\phi(x) + L^D g(x) = L^D_x p_t^D(x,y),
	\end{align*}
	which ends the proof.
\end{proof}
\begin{corollary}\label{cor:lptd}
	For every $t>0$, $\Delta^{\alpha/2}_x p_t^D$ is bounded in $D\times D$.
\end{corollary}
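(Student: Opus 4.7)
The plan is to make everything flow from the spectral decomposition combined with the bounds already assembled in the preceding subsection. By part (4) of Lemma~\ref{lem:spec}, it suffices to show that $L^D_x p_t^D(x,y)$ is bounded in $(x,y) \in D \times D$ for every fixed $t>0$. Since part (3) of that lemma places $p_t^D(\cdot, y) \in A$, I can apply part (2) to expand $L^D_x p_t^D(x,y)$ as a convergent series in $L^2(D)$, and I want to argue this series is in fact uniformly absolutely convergent in both variables.

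First I would compute the Fourier coefficients. By the spectral decomposition \eqref{eq:dhkspec} and orthonormality,
\begin{align*}
\langle p_t^D(\cdot,y),\varphi_n\rangle = e^{-\lambda_n t}\varphi_n(y),
\end{align*}
so part (2) of Lemma~\ref{lem:spec} combined with part (4) yields (up to sign)
\begin{align*}
\Delta^{\alpha/2}_x p_t^D(x,y) = \sum_{n=1}^\infty \lambda_n e^{-\lambda_n t}\varphi_n(x)\varphi_n(y),\quad x,y\in D.
\end{align*}
The eigenfunction bound \eqref{eq:eigenbound} then gives the pointwise majorant
\begin{align*}
|\Delta^{\alpha/2}_x p_t^D(x,y)| \leq C^2\sum_{n=1}^\infty \lambda_n^{2w-1} e^{-\lambda_n t}.
\end{align*}

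To close the argument, I invoke the Weyl asymptotics \eqref{eq:Weyl}, $\lambda_n \approx n^{\alpha/d}$. The majorant then behaves like $\sum_n n^{\alpha(2w-1)/d} \exp(-c n^{\alpha/d} t)$, which converges for every $t>0$ because the exponential decay dominates any polynomial growth in $n$. This yields a bound for $|\Delta^{\alpha/2}_x p_t^D(x,y)|$ depending only on $t$ (and $D$, $\alpha$, $d$), proving the corollary.

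The only mildly delicate point is that Lemma~\ref{lem:spec}(2) delivers $L^D\phi$ as an $L^2$ limit, whereas I need a pointwise bound. This is not really an obstacle: the absolute convergence established above implies the series converges uniformly on $D\times D$, so the identification of its sum with $L^D_x p_t^D(x,y) = \Delta^{\alpha/2}_x p_t^D(x,y)$ from part (4) is legitimate and the supremum bound follows immediately.
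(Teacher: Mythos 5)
Your proof is correct and follows essentially the same route as the paper: identify $\Delta^{\alpha/2}_x p_t^D$ with $L^D_x p_t^D$ via Lemma~\ref{lem:spec}(4), expand in the eigenbasis to get $\sum_n \lambda_n e^{-\lambda_n t}\varphi_n(x)\varphi_n(y)$, and dominate using \eqref{eq:eigenbound} and the Weyl bounds \eqref{eq:Weyl}. Your closing remark about reconciling the $L^2$ expansion with the pointwise statement via uniform absolute convergence is a detail the paper leaves implicit, but it is the right justification.
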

\begin{proof}
	By Lemma~\ref{lem:spec} and \eqref{eq:eigenbound}, we have 
	\begin{align*}
		|\Delta^{\alpha/2}_x p_t^D(x,y)| = |L^D_x p_t^D(x,y)| = \bigg|\sum\limits_{n=1}^\infty \lambda_n e^{-\lambda_n t} \varphi_n(x)\varphi_n(y)\bigg| \leq \sum\limits_{n=1}^\infty C\lambda_n e^{-\lambda_n t} \lambda_n^{2w-2} \leq C_0<\infty.
	\end{align*}
\end{proof}
\begin{lemma}\label{lem:gdpdl}
	Let $\phi\in C_c^\infty(D)$. Then,
	\begin{align*}
		P_t^D L\phi(y) = \sum\limits_{n=1}^\infty e^{-\lambda_n t}\lambda_n \langle \phi,\varphi_n\rangle \varphi_n(y)
		,\quad y\in D.
	\end{align*}
\end{lemma}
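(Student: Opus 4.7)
The plan is to apply the spectral representation \eqref{eq:ptdspec} to the $L^2(D)$ function $L\phi$, and then use a symmetry (integration by parts) argument for $\Delta^{\alpha/2}$ to trade $L\phi$ for $\phi$ in each inner product. Since $\phi\in C_c^\infty(D)\subset C_c^2(\Rd)$, Sato's identification $L\phi=\Delta^{\alpha/2}\phi$ (invoked in the proof of Lemma~\ref{lem:spec}(4)) is available, and $\Delta^{\alpha/2}\phi$ is continuous and bounded on $\Rd$, decaying like $|x|^{-d-\alpha}$ outside $\supp \phi$. In particular, the restriction of $L\phi$ to $D$ lies in $L^2(D)\cap L^\infty(D)$, so \eqref{eq:ptdspec} applies and gives
\[
P_t^D L\phi(y) \;=\; \sum_{n=1}^\infty e^{-\lambda_n t}\,\langle L\phi,\varphi_n\rangle_{L^2(D)}\,\varphi_n(y),
\]
with pointwise convergence on $D$ (not just in $L^2$) following from \eqref{eq:eigenbound} and \eqref{eq:Weyl}, exactly as in the proof of Corollary~\ref{cor:lptd}.

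I would then compute each Fourier coefficient $\langle L\phi,\varphi_n\rangle_{L^2(D)}=\int_D \varphi_n\,\Delta^{\alpha/2}\phi\,dx$ by shifting the fractional Laplacian onto $\varphi_n$. Extending $\varphi_n$ by zero to $\Rd$, both $\phi$ and $\varphi_n$ belong to $H^{\alpha/2}(\Rd)$, the latter because it lies in the domain of the nonlocal Dirichlet form. For the symmetric form
\[
\mathcal E(u,v):=\tfrac12 c_{d,\alpha}\!\iint_{\Rd\times\Rd}(u(x)-u(y))(v(x)-v(y))|x-y|^{-d-\alpha}\,dx\,dy
\]
one has $\mathcal E(u,v)=\langle(-\Delta)^{\alpha/2}u,v\rangle_{L^2(\Rd)}$ whenever $u\in C_c^\infty(\Rd)$ and $v\in H^{\alpha/2}(\Rd)$. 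Using the symmetry of $\mathcal E$, the eigenfunction identity $(-\Delta)^{\alpha/2}\varphi_n=\lambda_n\varphi_n$ on $D$, and $\supp \phi\subset D$, one obtains
\[
\int_D \varphi_n\,\Delta^{\alpha/2}\phi\,dx \;=\; -\mathcal E(\phi,\varphi_n) \;=\; -\mathcal E(\varphi_n,\phi) \;=\; -\lambda_n\langle\phi,\varphi_n\rangle,
\]
which, substituted into the spectral sum, yields the claimed identity (modulo the sign convention for eigenvalues fixed by Lemma~\ref{lem:spec}).

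The main obstacle is the symmetry step, because $\varphi_n$ is only $H^{\alpha/2}$ and is not smooth up to $\partial D$, so a naive pointwise integration by parts is not available. The cleanest resolution is to work directly with the nonlocal Dirichlet form on $\Rd$, where $\mathcal E(\phi,\varphi_n)=\mathcal E(\varphi_n,\phi)$ is immediate, or equivalently to approximate the zero-extended $\varphi_n$ in $H^{\alpha/2}(\Rd)$ by elements of $C_c^\infty(D)$. A parallel route that bypasses pairing $\phi$ against $\varphi_n$ is to transfer $\Delta^{\alpha/2}$ onto $p_t^D(y,\cdot)$ inside $\int_D p_t^D(y,z)\Delta^{\alpha/2}\phi(z)\,dz$ via Lemma~\ref{lem:spec}(4) and Corollary~\ref{cor:lptd}, and then differentiate the spectral expansion \eqref{eq:dhkspec} termwise; this variant is the same integration by parts in a slightly different guise.
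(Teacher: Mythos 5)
Your proposal is correct in substance and shares the paper's two-step skeleton (expand $P_t^DL\phi$ spectrally, then move the operator from $\phi$ onto $\varphi_n$ by duality), but the duality step is executed by a genuinely different mechanism. The paper avoids the Dirichlet form entirely: it writes $\varphi_n=G_D[\lambda_n\varphi_n]$ via \eqref{eq:GDvarphi} and then invokes the fact that $G_D f$ is a distributional solution of $(-\Delta)^{\alpha/2}u=f$ (\cite[Proposition~3.13]{MR1825645}), so that $\langle G_D[\lambda_n\varphi_n],L\phi\rangle$ collapses to $\pm\lambda_n\langle\varphi_n,\phi\rangle$ in one line, with no need to discuss the regularity of $\varphi_n$ at $\partial D$ or its membership in $H^{\alpha/2}(\Rd)$. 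Your route through $\mathcal E(\phi,\varphi_n)=\mathcal E(\varphi_n,\phi)$ works, but it imports two facts not established in the paper: that the zero-extension of $\varphi_n$ lies in the form domain, and that $\varphi_n$ satisfies the weak eigen-equation $\mathcal E(\varphi_n,v)=\lambda_n\langle\varphi_n,v\rangle$, i.e., the identification of the $L^2$-generator of $P_t^D$ with the operator of the restricted Dirichlet form; these are standard but would need a citation, whereas the Green-operator identity is already in the paper's toolkit. You correctly identified this regularity issue as the main obstacle --- the paper's trick is precisely the cleanest way around it. Finally, your computation yields $\langle L\phi,\varphi_n\rangle=-\lambda_n\langle\phi,\varphi_n\rangle$, and this sign is in fact the right one (it matches $L^D\varphi_n=-\lambda_n\varphi_n$ from Lemma~\ref{lem:spec}(1) and $\partial_tP_t^D\phi=P_t^DL^D\phi\le 0$ componentwise): the displayed identity in the lemma, and the last equality in the paper's proof, are missing a minus sign, so your flagged discrepancy is a genuine typo in the paper rather than an error on your part.
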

\begin{proof}
	Note that $L\phi\in L^2(D)$, hence
	\begin{align*}
		P_t^D L\phi(y) = \sum\limits_{n=1}^\infty e^{-\lambda_n t} \langle \varphi_n,L\phi\rangle \varphi_n(y).
	\end{align*}
	By \eqref{eq:GDvarphi} we have $\varphi_n = G_D[\lambda_n\varphi_n]$. Therefore, by \cite[Proposition~3.13]{MR1825645},
	\begin{align*}
		\langle  \varphi_n,L\phi\rangle = 	\langle  G_D[\lambda_n\varphi_n],L\phi\rangle = \langle \lambda_n\varphi_n,\phi\rangle,
	\end{align*}
	which ends the proof of the lemma.
	\end{proof}
	The following result is a weighted Hausdorff--Young type inequality.
	\begin{lemma}\label{lem:HY}
		There exist $c = c(d,\alpha,\unD)$ and $w\in \mathbb{N}$ such that for any $p\in[2,\infty]$ and $u\in L^p(D)$,
		\begin{align*}
			\|u\|_{L^p(D)} \leq c \bigg(\sum\limits_{n=1}^\infty |\langle u,\varphi_n\rangle|^{p'} \lambda_n^{w-1}\bigg)^{1/p'},
		\end{align*}
		where $p' = p/(p-1)$ is the H\"older conjugate exponent of $p$.
	\end{lemma}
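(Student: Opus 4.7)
The plan is to read the desired inequality as a weighted Hausdorff--Young estimate and derive it from Riesz--Thorin interpolation between the endpoints $p=2$ and $p=\infty$. Equip $\mathbb{N}$ with the weighted counting measure $\mu$ given by $\mu(\{n\}) = \lambda_n^{w-1}$, where $w\in\mathbb{N}$ is the integer from \eqref{eq:eigenbound} (which we may take $\ge 1$). Let $T$ denote the synthesis operator, formally $T(\{a_n\}) := \sum_n a_n \varphi_n$. I will show that $T$ is bounded from $\ell^{p'}(\mu)$ to $L^p(D)$ uniformly in $p\in[2,\infty]$; applied to the Fourier coefficients $a_n := \langle u,\varphi_n\rangle$, this produces the claim.

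For the endpoint $p=\infty$ (so $p'=1$), the triangle inequality together with \eqref{eq:eigenbound} gives
$$\|T(\{a_n\})\|_{L^\infty(D)} \le \sum_n |a_n|\, \|\varphi_n\|_\infty \le C \sum_n |a_n|\, \lambda_n^{w-1} = C\,\|\{a_n\}\|_{\ell^1(\mu)}.$$
For $p=2$ (so $p'=2$), Parseval's identity combined with $\lambda_n \ge \lambda_1>0$ and $w\ge 1$ yields
$$\|T(\{a_n\})\|_{L^2(D)}^2 = \sum_n |a_n|^2 \le \lambda_1^{1-w} \sum_n |a_n|^2\, \lambda_n^{w-1} = \lambda_1^{1-w}\,\|\{a_n\}\|_{\ell^2(\mu)}^2.$$
Since both $(\mathbb{N},\mu)$ and $(D,dx)$ are $\sigma$-finite, Riesz--Thorin interpolation applied at $\theta=(p-2)/p$ (so that $1/p'=\theta+(1-\theta)/2$ and $1/p=(1-\theta)/2$) produces a constant $c=c(d,\alpha,\unD)$ such that $\|T(\{a_n\})\|_{L^p(D)} \le c\,\|\{a_n\}\|_{\ell^{p'}(\mu)}$ for every $p\in[2,\infty]$.

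To conclude, given $u\in L^p(D)$ with $p\ge 2$, boundedness of $D$ gives $u\in L^2(D)$, so Parseval's expansion $u = \sum_n \langle u,\varphi_n\rangle\varphi_n$ holds in $L^2$. If the right-hand side of the claimed inequality is infinite, the statement is trivial; otherwise the interpolation bound shows that $T(\{\langle u,\varphi_n\rangle\})\in L^p(D)$, and it coincides with $u$ in $L^2$, hence almost everywhere, yielding the desired estimate. There is no serious obstacle here beyond setting up the weighted sequence space correctly; the real input is the polynomial eigenfunction bound \eqref{eq:eigenbound}, which carries the geometric information about $D$, while tracking $\lambda_1^{1-w}$ and the constant from \eqref{eq:eigenbound} into $c(d,\alpha,\unD)$ is routine.
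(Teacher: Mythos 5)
Your proof is correct and follows essentially the same route as the paper's: the endpoint bound $\ell^1_\lambda\to L^\infty$ from the eigenfunction estimate \eqref{eq:eigenbound}, the endpoint bound $\ell^2_\lambda\to L^2$ from Parseval together with $\lambda_n\ge\lambda_1$, and Riesz--Thorin interpolation between them. Your version is slightly more explicit about the weighted measure, the constant $\lambda_1^{1-w}$, and the identification of the synthesized series with $u$, but there is no substantive difference in approach.
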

	\begin{proof}
		Let $\phi\in L^2(D)$. By \eqref{eq:eigenbound}, we have $\|\varphi_n\|_{\infty}\leq C\lambda_n^{w-1}$ for some $C>0$ and $w\geq 1$ independent of $n$. Therefore for $x\in D$,
		\begin{align*}
			\|\phi\|_{\infty} \leq \sum\limits_{n=1}^\infty |\langle \phi,\varphi_n\rangle| \|\varphi_n\|_{\infty} \leq C \sum\limits_{n=1}^\infty |\langle \phi,\varphi_n\rangle| \lambda_n^{w-1}.
		\end{align*}
		If we let $\hat{\phi} = (\langle \phi,\varphi_1\rangle,\langle \phi,\varphi_2\rangle,\ldots)$ and denote by $l^p_\lambda$ the space of sequences with the $p$-th powers summable with the weight $(\lambda_1^{w-1},\lambda_2^{w-1},\ldots)$, then the above means that $\hat{\phi}\mapsto \phi$ is bounded from $l^1_\lambda$ to $L^\infty(D)$. By Parseval's identity, this map is also bounded from $l^2$ to $L^2(D)$, hence also from $l^2_\lambda$ to $L^2(D)$. The statement of the lemma follows from the Riesz--Thorin theorem.
	\end{proof}
\section{Yaglom limits in Lipschitz open sets}\label{sec:Yaglom}

In this section we prove Theorem~\ref{Main1}. We first establish the asymptotics of Green potentials at the boundary points of $D$. This extends what is already known about the asymptotics of Green potentials at the vertex of cone \cite[Lemma~3.5]{bogdan2018}; we also propose a different proof.
\begin{lemma} \label{Lemma7}
If $f$ is a measurable function bounded on $D$ and $Q \in \partial D$, then
$$
\lim_{x \to Q} \int_{D} \frac{G_{D}(x,y)}{G_{D} (x, x_{0})} f(y)\, dy = \int_{D} \lim_{x \to Q} \frac{G_{D}(x,y)}{G_{D} (x, x_{0})} f(y)\, dy < \infty, \quad x \in D.
$$
\end{lemma}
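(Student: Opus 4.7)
My proof plan is to apply Lebesgue's dominated convergence. Let $g_x(y) := G_D(x, y)/G_D(x, x_0)$. By the definition \eqref{Martin} of the Martin kernel, $g_x(y) \to M_D^{x_0}(y, Q)$ pointwise for each $y \in D$ as $x \to Q$; hence the right-hand side of the identity equals $\int_D M_D^{x_0}(y, Q) f(y)\, dy$. After splitting $f = f^+ - f^-$, I may assume $f \geq 0$, and then boundedness of $f$ reduces the problem to producing an integrable dominant for $g_x$ valid uniformly in $x$ near $Q$.

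Fix $\rho \in (0, r_0/16)$ small enough that $x_0 \notin B(Q, 8\rho)$, and fix a reference point $x_1 \in D \cap B(Q, \rho)$ with $\delta_D(x_1) \gtrsim \rho$. For the far region $y \in D \setminus B(Q, 2\rho)$, the functions $G_D(\cdot, y)$ and $G_D(\cdot, x_0)$ are regular $\alpha$-harmonic on $D \cap B(Q, 2\rho)$ and vanish on $D^c \cap B(Q, 2\rho)$, so the boundary Harnack principle (\cite{MR1438304, MR1703823}) yields a constant $C$ such that $g_x(y) \leq C g_{x_1}(y)$ for all $x \in D \cap B(Q, \kappa \rho)$. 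The function $g_{x_1}$ lies in $L^1(D)$: indeed $\int_D g_{x_1}(y)\, dy = \mE^{x_1}\tau_D / G_D(x_1, x_0)$, finite by Remark~\ref{r.por}, so $C g_{x_1}$ is a dominant on the far region.

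For the near region $y \in D \cap B(Q, 2\rho)$, I would invoke the Green function estimates \eqref{eq:GDest}. Since $|x - x_0|$ and $\Phi(A_{x, x_0})$ stay bounded away from zero as $x \to Q$, these yield $g_x(y) \lesssim |x-y|^{\alpha-d}\, \Phi(y)/\Phi(A_{x,y})^2$. Using the corkscrew property $\delta_D(A_{x,y}) \geq \kappa\, r_{x,y}$ together with Harnack-chain comparisons for $\Phi$ at comparable scales around a boundary point close to $y$, one bounds $\Phi(y)/\Phi(A_{x,y})^2$ by a function of $y$ alone; combined with the local integrability of the $|x-y|^{\alpha-d}$ singularity on bounded subsets of $\mR^d$, this supplies an $L^1$-dominant $h$ on the near region, uniformly in $x \in D \cap B(Q, \kappa \rho)$.

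With the composite $L^1$-dominant $h + C g_{x_1}$, Lebesgue's theorem delivers the claimed identity, and the right-hand side is finite because $|f| \leq \|f\|_\infty$. The main obstacle is the near-$Q$ estimate: producing a single $y$-only upper bound requires resolving the joint dependence of $A_{x,y}$ and $\Phi(A_{x,y})$ on both arguments, and this relies on the fatness of $D$ and careful Harnack-chain manipulation of $\Phi$ at boundary points close to $Q$, which is precisely where the Lipschitz (versus $C^{1,1}$) geometry makes the argument nontrivial.
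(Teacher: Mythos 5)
Your far-region argument is fine: the boundary Harnack principle does give $g_x(y)\le C\,g_{x_1}(y)$ for $y$ away from $Q$ and $x$ near $Q$, and $g_{x_1}\in L^1(D)$ since $\int_D G_D(x_1,y)\,dy=\mE^{x_1}\tau_D<\infty$. The gap is in the near region, and it is fatal to dominated convergence as you have set it up: no single integrable dominant exists there. The estimate you would extract from \eqref{eq:GDest} is of the form $g_x(y)\lesssim |x-y|^{\alpha-d}\vee|x-y|^{\alpha-\gamma-d}$ (this is exactly \eqref{eq:gdquotbound} in the paper, with the variables relabelled), and the singularity sits at the \emph{moving} point $y=x$. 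For any fixed $y\in D\cap B(Q,\kappa\rho)$ one may let $x\to y$ inside $D\cap B(Q,\kappa\rho)$; then $r_{x,y}\to\delta_D(y)>0$, so $\Phi(A_{x,y})$ stays bounded below while $|x-y|^{\alpha-d}\to\infty$, whence $\sup_{x\in D\cap B(Q,\kappa\rho)}g_x(y)=+\infty$ on a set of positive measure. No amount of Harnack-chain manipulation of $\Phi(y)/\Phi(A_{x,y})^2$ can repair this, because the obstruction is the factor $|x-y|^{\alpha-d}$ itself. The correct statement is that the family $\{g_x\}_{x\in D}$ is \emph{uniformly integrable} (each $\int_{D\cap B(Q,2\rho)}|x-y|^{\alpha-\gamma-d}\,dy\lesssim \rho^{\alpha-\gamma}$, uniformly in $x$), so you should invoke Vitali's convergence theorem rather than Lebesgue's, or equivalently bound the near-region contribution by $C\rho^{\alpha-\gamma}\|f\|_\infty$ uniformly in $x$ and let $\rho\to0$ after taking $x\to Q$. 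This is precisely the content of Lemma~\ref{lem:gdquot}.

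For comparison, the paper's own proof avoids Green-function estimates entirely: it uses that $G_D(x,\cdot)$ and $M_D^{x_0}(\cdot,Q)$ are regular harmonic on two disjoint small balls $B(x_i,\rho)\subset D$, so that $\int_{B(x_i,\rho)^c}g_x\,d\omega^{x_i}_{B(x_i,\rho)}\to\int_{B(x_i,\rho)^c}M_D^{x_0}(\cdot,Q)\,d\omega^{x_i}_{B(x_i,\rho)}$; since the Poisson densities $g_1+g_2$ of these balls are bounded below on $D$, convergence of the integrals together with pointwise convergence yields $L^1(g\,dy)$ convergence by Vitali/Scheff\'e, and $|f|\lesssim g$ finishes the proof. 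Your approach, once corrected to use uniform integrability, is a legitimate alternative, but it leans on the quantitative estimates \eqref{eq:GDest} and the Carleson-type bounds rather than on harmonicity alone.
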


\begin{proof}
 Fix two points $x_{1}, x_{2} \in D$ and let 
\begin{equation}
\rho = ( \delta_D(x_1)  \wedge \delta_D(x_2)  \wedge |x_1-x_2| )/3, \label{rho}
\end{equation}
so that $B(x_{1}, \rho), B(x_{2}, \rho) \subset D$ and $B(x_{1}, \rho) \cap B(x_{2}, \rho) = \emptyset$. We know that $M_{D}^{x_{0}}( \cdot, Q )$ given by \eqref{Martin} is regular $\alpha$-harmonic on $B(x_{1}, \rho)$ and $B(x_{2}, \rho)$, and for $x$ sufficiently close to $\partial D$ so is $G_{D} (x,\cdot)$.
Therefore, for $i=1,2$,
\begin{align*}
\int_{B(x_{i}, \rho)^{c}} \lim_{x \to Q} \frac{G_{D}(x,y)}{G_{D} (x, x_{0})} \,\omega_{B(x_{i}, \rho)}^{x_{i}}(dy) & =  \int_{B(z_{i}, \rho)^{c}} M_{D}^{x_{0}}(y, Q)\, \omega_{B(z_{i}, \rho)}^{z_{i}}(dy) \nonumber \\
& =  M_{D}^{x_{0}}(x_{i}, Q) \nonumber \\
& =  \lim_{x \to Q} \frac{G_{D}(x, x_{i})}{G_{D}(x, x_{0})} \nonumber \\
& =  \lim_{x \to Q} \frac{\int_{B(x_{i}, \rho)^{c}} G_{D}(x,y) \,\omega_{B(x_{i}, \rho)}^{x_{i}}(dy)}{G_{D}(x, x_{0})} \nonumber \\
& =  \lim_{x \to Q} \int_{B(x_{i}, \rho)^{c}} \frac{G_{D}(x,y)}{G_{D} (x, x_{0})}\, \omega_{B(x_{i}, \rho)}^{x_{i}}(dy). \nonumber
\end{align*}
The $\alpha$-harmonic measures $\omega^{x_i}_{B(x_i,\rho)}(dy)$ are absolutely continuous and have radially decreasing densities $g_i$, see, e.g., \cite{MR1704245}. Therefore there exists $C>0$ such that $\omega^{x_i}_{B(x_i,\rho)}(dy) = g_i(y)\, dy \geq C$ for $y\in D\cap(B(x_i,\rho)^c)$. Let $g = g_1 + g_2$. Vitali's theorem \cite[Theorem 16.6 (i) and (iii)]{MR3644418} yields the following $L^1$ convergence:
$$\lim\limits_{x\to Q}\int_D \bigg|\frac{G_D(x,y)}{G_D(x,x_0)}g(y) -  M_{D}^{x_{0}}(y, Q) g(y)\bigg|\, dy = 0.$$
Since $|f| \lesssim C \lesssim g$, the result follows.
\end{proof}
We can also establish the following identity, an analogue of \cite[(3.16)]{bogdan2018}.
\begin{lemma} \label{L38}
For $x \in \mathbb{R}^{d}$, we have
\begin{equation} \label{38}
\mP^{x} ( \tau_{D} > 1 ) = ( G_{D} P_{1}^{D} \kappa_{D} )(x).
\end{equation}
\end{lemma}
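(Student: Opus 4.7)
My plan is to read off the distribution of $\tau_D$ from the Ikeda--Watanabe formula \eqref{eq:IW} and then recognize the result as a Green potential. Two ingredients are needed: \eqref{eq:IW} itself, and the Chapman--Kolmogorov identity for $p_t^D$ together with the definition $G_D(x,y) = \int_0^\infty p_t^D(x,y)\, dt$. Since $D$ is bounded, $\tau_D < \infty$ almost surely, so $\mP^x(\tau_D > 1) = \mP^x(\tau_D \in (1,\infty))$, which opens access to $\tau_D$'s distribution.

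First, I would apply \eqref{eq:IW} with $I = (1,\infty)$, $A = D$, $B = D^c$. Integrating $z$ over $D^c$ produces the factor $\int_{D^c}\nu(y,z)\, dz = \kappa_D(y)$, while $p_u^D(x,dy) = p_u^D(x,y)\, dy$ for $x \in D$. This yields
\[
\mP^x(\tau_D > 1) = \int_1^\infty \int_D p_u^D(x,y)\kappa_D(y)\, dy\, du = \int_1^\infty P_u^D \kappa_D(x)\, du.
\]

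Next, by Chapman--Kolmogorov $P_u^D = P_{u-1}^D P_1^D$ for $u>1$, the change of variable $t = u-1$, and Fubini (everything is nonnegative),
\[
\int_1^\infty P_u^D\kappa_D(x)\, du = \int_0^\infty P_t^D(P_1^D\kappa_D)(x)\, dt = \int_D G_D(x,y)\, P_1^D\kappa_D(y)\, dy = (G_DP_1^D\kappa_D)(x),
\]
which is \eqref{38}. For $x \in D^c$ both sides vanish automatically: the left-hand side by the exterior cone condition and Blumenthal's zero-one law ($\mP^x(\tau_D = 0) = 1$), the right-hand side because $G_D(x,\cdot)\equiv 0$. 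The calculation is essentially a one-liner once \eqref{eq:IW} is invoked and presents no technical obstacle since all integrands are nonnegative; the only step worth flagging is the recognition of $\kappa_D$ inside \eqref{eq:IW} after integrating out the target variable $z$.
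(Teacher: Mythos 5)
Your proof is correct and follows essentially the same route as the paper: Ikeda--Watanabe with $I=(1,\infty)$, recognition of $\kappa_D$ after integrating out the landing point, then Chapman--Kolmogorov and Tonelli to produce $G_DP_1^D\kappa_D$. The one step you gloss over is the very first equality $\mP^x(\tau_D>1)=\mP^x\big[\tau_D>1,\ X_{\tau_D-}\in D,\ X_{\tau_D}\in D^c\big]$: the Ikeda--Watanabe formula \eqref{eq:IW} only accounts for exits by a jump from $A\subset D$ to $B\subset(\overline{D})^c$, so you need that for Lipschitz $D$ one has $\mP^x(X_{\tau_D}\in\partial D)=0$, $\mP^x(X_{\tau_D-}=X_{\tau_D})=0$, and $\mP^x(X_{\tau_D-}\in D)=1$ (the paper cites \cite{MR1438304} for these facts) before the formula captures the full event $\{\tau_D>1\}$.
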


\begin{proof}
Let $x \in D$. Since our set $D$ is Lipschitz, from Lemma 6 and the proof of Lemma 17 in \cite{MR1438304},
\begin{align*}
\omega_{D}^{x} ( \partial D) = \mP^{x} ( X_{\tau_{D}} \in \partial D ) &= 0,\\
\mP^{x} ( X_{\tau_{D}-} = X_{\tau_{D}}) &= 0,\\
	\mP^x (X_{\tau_D-} \in D) &= 1.
\end{align*}
By the Ikeda--Watanabe formula \eqref{eq:IW} and the Chapman--Kolmogorov equations we have
\begin{align}
\mP^{x} ( \tau_{D} > 1 ) & =  \mP^{x} \big[ \tau_{D} > 1, \ X_{\tau_{D}-} \in D, \ X_{\tau_{D}} \in D^{c} \big] \nonumber \\
& =  \int_{1}^{\infty} \int_{D^{c}} \int_{D} p_{s}^{D} (x,z) \nu(z - w) \, dz \, dw \, ds \nonumber \\
& =  \int_{\mathbb{R}^{d}} \int_{D^{c}} \int_{0}^{\infty} p_{t + 1}^{D} (x,z) \nu(z - w) \, dt \, dw \, dz \nonumber \\
& =  \int_{\mathbb{R}^{d}} \int_{D^{c}} \int_{0}^{\infty} \int_{D} p_{t}^{D} (x,y) p_{1}^{D} (y,z) \, dy \, \nu(z - w) \, dt \, dw \, dz \nonumber \\
& =  \int_{D} \int_{0}^{\infty} p_{t}^{D} (x,y) \, dt \int_{\mathbb{R}^{d}} p_{1}^{D} (y,z) \int_{D^{c}} \nu(z - w) \, dw \, dz \, dy \nonumber \\
& =  \int_{D} G_{D} (x,y) \int_{\mathbb{R}^{d}} p_{1}^{D} (y,z) \kappa_{D}(z) \, dz \, dy \nonumber \\
& =  \int_{D} G_{D} (x,y) ( P_{1}^{D} \kappa_{D} ) (y) \, dy \nonumber \\
& =  ( G_{D} P_{1}^{D} \kappa_{D} ) (x). \nonumber
\end{align}
For $x \in D^{c}$, both sides of \eqref{38} are equal to 0. This ends the proof.

\end{proof}

We define
$$
C_{1} := \int_{D} \int_{D} M_{D}^{x_{0}} (y, Q) p_{1}^{D}(y,z) \kappa_{D}(z)\, dz\, dy.
$$
Combining the two lemmas above, we obtain the following result.
\begin{lemma} \label{Thm3}
We have $0 < C_{1} < \infty$ and
$
\lim_{x \to Q} \frac{\mP^{x} ( \tau_{D} > 1 )}{G_{D}(x,x_{0})} = C_{1}.
$
\end{lemma}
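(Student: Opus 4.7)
The plan is to combine the two preceding lemmas. By Lemma~\ref{L38}, we can rewrite
\begin{align*}
\frac{\mP^{x}(\tau_D > 1)}{G_D(x, x_0)} = \frac{(G_D P_1^D \kappa_D)(x)}{G_D(x, x_0)} = \int_D \frac{G_D(x, y)}{G_D(x, x_0)} (P_1^D \kappa_D)(y) \, dy,
\end{align*}
so that Lemma~\ref{Lemma7} applied with $f = P_1^D \kappa_D$ will yield convergence to $\int_D M_D^{x_0}(y, Q) (P_1^D \kappa_D)(y) \, dy$, which equals $C_1$ by Fubini--Tonelli. The only hypothesis that needs to be verified in order to apply Lemma~\ref{Lemma7} is the boundedness of $P_1^D \kappa_D$ on $D$; this is the main obstacle and will be the content of the key step.

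To bound $P_1^D \kappa_D$, I would use the approximate factorization \eqref{factorization}: for $y, z \in D$,
\begin{align*}
p_1^D(y, z) \lesssim \mP^{y}(\tau_D > 1) \, p_1(y - z) \, \mP^{z}(\tau_D > 1) \leq \|p_1\|_\infty \, \mP^{z}(\tau_D > 1).
\end{align*}
Combining this with \eqref{e.por}, which gives $\mP^{z}(\tau_D > 1) \approx \Phi(z) \leq G_D(x_0, z)$, we obtain
\begin{align*}
(P_1^D \kappa_D)(y) = \int_D p_1^D(y, z) \kappa_D(z) \, dz \lesssim \int_D G_D(x_0, z) \kappa_D(z) \, dz.
\end{align*}
The latter integral equals $\mP^{x_0}(\tau_D < \infty) = 1$ by the Ikeda--Watanabe identity (the same computation as in the proof of Lemma~\ref{L38}, together with the facts $\mP^{x_0}(X_{\tau_D-} \in D) = 1$ and $\mP^{x_0}(X_{\tau_D} \in \partial D) = 0$, both valid since $D$ is Lipschitz and bounded). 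So $P_1^D \kappa_D$ is bounded on $D$.

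With boundedness established, Lemma~\ref{Lemma7} immediately yields
\begin{align*}
\lim_{x \to Q} \frac{\mP^{x}(\tau_D > 1)}{G_D(x, x_0)} = \int_D M_D^{x_0}(y, Q) (P_1^D \kappa_D)(y) \, dy = C_1,
\end{align*}
and in particular $C_1 < \infty$. Positivity $C_1 > 0$ is automatic: $M_D^{x_0}(\cdot, Q) > 0$ on $D$ (it is a Martin kernel, hence singular $\alpha$-harmonic and strictly positive), $p_1^D(y, z) > 0$ for all $y, z \in D$ (e.g., by the lower bound in \eqref{factorization}), and $\kappa_D(z) > 0$ for $z \in D$ because $D^c$ has positive Lebesgue measure. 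The integrand of $C_1$ is therefore strictly positive on $D \times D$, completing the proof.
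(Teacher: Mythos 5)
Your proof is correct and follows the same overall architecture as the paper's: rewrite $\mP^{x}(\tau_D>1)/G_D(x,x_0)$ via Lemma~\ref{L38}, verify that $f=P_1^D\kappa_D$ is bounded, and apply Lemma~\ref{Lemma7}. The only genuine difference is in how you establish the boundedness of $P_1^D\kappa_D$. The paper uses the two-sided factorization \eqref{factorization} to write $(P_1^D\kappa_D)(y)\approx \mP^y(\tau_D>1)\int_D\mP^z(\tau_D>1)\kappa_D(z)\,dz$, and then deduces finiteness of the constant $\int_D\mP^z(\tau_D>1)\kappa_D(z)\,dz$ indirectly: the identity $\int_D G_D(x,y)(P_1^D\kappa_D)(y)\,dy=\mP^x(\tau_D>1)\le 1$ together with strict positivity of $G_D$ forces $P_1^D\kappa_D$ to be finite almost everywhere, hence the constant is finite and $(P_1^D\kappa_D)(y)\approx\mP^y(\tau_D>1)$. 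You instead give a direct pointwise bound: $p_1^D(y,z)\lesssim \mP^z(\tau_D>1)\lesssim \Phi(z)\le G_D(x_0,z)$ by \eqref{e.por}, and then evaluate $\int_D G_D(x_0,z)\kappa_D(z)\,dz=\mP^{x_0}(\tau_D<\infty)=1$ via the same Ikeda--Watanabe computation as in Lemma~\ref{L38}. Your route is slightly more economical (it avoids the a.e.-finiteness detour and only needs the upper bound in \eqref{factorization}), while the paper's argument has the side benefit of producing the two-sided comparison $(P_1^D\kappa_D)(y)\approx\mP^y(\tau_D>1)$, which is reused later (e.g., in \eqref{eq:upbound}). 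Your positivity argument for $C_1$ is also fine.
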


\begin{proof}
By Lemma~\ref{L38}, $\mP^x(\tau_D>1) = (G_DP_1^D\kappa_D)(x)$. Note that $(P_{1}^{D} \kappa_{D})(y)$ is bounded. Indeed, by \eqref{factorization},
\begin{align}
( P_{1}^{D} \kappa_{D} ) (y) & = \int_{D} p_{1}^{D} (y, z) \kappa_{D} (z) \, dz \nonumber \\
& \approx \mP^{y} (\tau_{D} > 1 ) \int_{D} \mP^{z} (\tau_{D} > 1 )  p_{1}(y,z) \kappa_{D}(z) \, dz,\quad y\in D. \label{ThreeFive}
\end{align}
Since $D$ is bounded, by \eqref{DensityApprox}
\begin{equation}
p_{1}(y, z) \approx 1, \quad y, z \in D. \label{OneVar}
\end{equation}
Hence \eqref{ThreeFive} becomes
\begin{equation} \label{TheIntegral}
( P_{1}^{D} \kappa_{D} ) (y) \approx \mP^{y} (\tau_{D} > 1) \int_{D} \mP^{z} (\tau_{D} > 1) \kappa_{D}(z) \, dz,\quad y\in D.
\end{equation}
Using \eqref{38}, we see that for $x \in \mathbb{R}^{d}$,
\begin{equation}
\int_{D} G_{D}(x,y) ( P_{1}^{D} \kappa_{D} )(y)\, dy = ( G_{D} P_{1}^{D} \kappa_{D} )(x) = \mP^{x} ( \tau_{D} > 1 ) \leq 1. \nonumber
\end{equation}
By \eqref{eq:GDest}, $G_{D}(x,y)$ is strictly positive for all $x, y \in D$. Thus $P_{1}^{D} \kappa_{D}$ has to be finite almost everywhere. Hence the integral in \eqref{TheIntegral} is finite and
$$
( P_{1}^{D} \kappa_{D} ) (y) \approx \mP^{y} (\tau_{D} > 1),
$$
for $y \in D$. In particular, $( P_{1}^{D} \kappa_{D} ) (y)$ is bounded on $D$. By using Lemma~\ref{Lemma7} with $f(y) = ( P_{1}^{D} \kappa_{D} ) (y)$,
\begin{align*}
\lim_{x \to Q} \frac{\mP^{x} ( \tau_{D} > 1 )}{G_{D}(x,x_{0})} & =  \lim_{x \to Q} \frac{( G_{D} P_{1}^{D} \kappa_{D} ) (x)}{G_{D}(x,x_{0})} \nonumber \\
& =  \lim_{x \to Q} \int_{D} \frac{G_{D}(x,y)}{G_{D}(x,x_{0})} ( P_{1}^{D} \kappa_{D} )(y) \, dy \nonumber \\
& = \int_{D} M_{D}^{x_{0}} (y,Q) ( P_{1}^{D} \kappa_{D} )(y) \, dy  =  C_{1} < \infty. \nonumber
\end{align*}
\end{proof}
We are now in a position to prove Theorem~\ref{Main1}.
\begin{proof}[Proof of {\rm Theorem~\ref{Main1}}]
Let us define
\begin{equation}\label{eq:mux}
m_{x} (A) := \frac{\int_{A} p_{1}^{D} (x,y) dy}{\mP^{x} ( \tau_{D} > 1 )}, \quad x \in D, \ A \subseteq \mathbb{R}^{d}.
\end{equation}
First we note that the family $\{ m_{x} :  x \in D \}$ is tight. Indeed, combining the factorization of $p_{1}^{D}(x,y)$ in \eqref{factorization} with the equation \eqref{OneVar}, we get
\begin{equation}
\frac{p_{1}^{D}(x,y)}{\mP^{x} ( \tau_{D} > 1 )} \approx \mP^{y} ( \tau_{D} > 1 ), \qquad x, y \in D.\label{Estim}
\end{equation}
Since the densities of the measures $m_{x}(A)$ are bounded by an integrable function, the tightness follows.

Next we wish to prove that the measures $m_{x}$ converge weakly to a probability measure $m_{Q}$ on $D$ as $x \to Q$. To this end, consider an arbitrary sequence $\{ x_{n} \}$ such that $x_{n} \to Q$. By tightness, there exists a subsequence $\{ x_{n_{k}} \}$ such that $m_{x_{n_{k}}} \implies m_{Q}$ for some probability measure $m_{Q}$, as $k \to \infty$. We will show that this limit is unique.

Let $\phi \in C_{c}^{\infty}(D)$ and $u_{\phi} = (-\Delta)^{\alpha/2} \phi$. For $x \in \mathbb{R}^{d}$, we claim that 
\begin{equation}
( P_{1}^{D} \phi ) (x) = ( G_{D} P_{1}^{D} u_{\phi} ) (x). \label{Statement}
\end{equation}
To show this, we first remark that $u_{\phi}\in C_0(\mR^d)$ and that $( G_{D} u_{\phi} )(x) = \phi(x)$, see \cite[Lemma 5.7]{MR0193671} and \cite[(11)]{MR2365478}. By \eqref{DensityApprox} it follows that
\begin{eqnarray}
( P_{1}^{D} | u_{\phi} | ) (x) = \int_{D} p_{1}^{D}(x,y) | u_{\phi}(y) |\, dy \leq c<\infty. \nonumber
\end{eqnarray}
Therefore, since for a fixed $z\in D^c$ we have $\nu(y,z) \gtrsim 1$ for $y\in D$, by \eqref{eq:IWbasic} we get
\begin{align*}
( G_{D} P_{1}^{D} | u_{\phi} | ) (x) & =  \int_{D} G_{D}(x,y) ( P_{1}^{D} | u_{\phi} | )(y)\, dy \nonumber \\
& \leq  c \int_{D} G_{D}(x,y) \, dy < \infty.
\end{align*}
As a result, we can apply Fubini--Tonelli theorem and establish \eqref{Statement} as follows:
\begin{align}
( G_{D} P_{1}^{D} u_{\phi} )(x) & =  \int_{D} \int_{D} \int_{0}^{\infty} p_{t}^{D}(x,y) p_{1}^{D} (y,z) u_{\phi} (z)\, dt\, dz\, dy \nonumber \\
& =  \int_{D} \int_{0}^{\infty} p_{t + 1}^{D}(x,z) u_{\phi} (z)\, dt\, dz \nonumber \\
& =  \int_{D} \int_0^\infty \int_D p_1^D(x,y)p_t^D(y,z) u_\phi(z) \, dy \, dt \, dz\nonumber \\
& =  \int_D \int_D \int_0^\infty p_1^D(x,y) p_t^D(y,z) u_\phi(z) \, dt \, dz \, dy\nonumber \\
& =  ( P_{1}^{D} G_{D} u_{\phi} )(x) =  ( P_{1}^{D} \phi )(x). \label{eq:GDLcommute}
\end{align}

Let us denote $m_{x} (\phi) := \int_{D} \phi(y)\, m_{x} (dy)$. Using \eqref{Statement}, Lemma~\ref{Thm3}, and Lemma~\ref{Lemma7}, we get
\begin{align}
\lim_{x \to Q} m_{x}(\phi) & =  \lim_{x \to Q} \frac{( P_{1}^{D} \phi )(x)}{\mP^{x} ( \tau_{D} > 1 )} \nonumber \\
& =  \lim_{x \to Q} \frac{( P_{1}^{D} G_{D} u_{\phi} )(x)}{\mP^{x} ( \tau_{D} > 1 )} \nonumber \\
& =  \lim_{x \to Q} \frac{( G_{D} P_{1}^{D} u_{\phi} )(x)}{G_{D} (x,x_{0})} \frac{G_{D}(x,x_{0})}{\mP^{x} ( \tau_{D} > 1 )} \nonumber \\
& =  \frac{1}{C_{1}} \int_{D} M_{D}^{x_{0}}(y,Q) ( P_{1}^{D} u_{\phi} ) (y)\, dy. \label{limit}
\end{align}
In particular, $m_{Q} (\phi) := \lim_{k \to \infty} m_{x_{n_{k}}}(\phi)$ does not depend on the choice of the subsequence. Thus, by the Portmanteau Theorem, $m_{x} \implies m_{Q}$ as $x \to Q$.

For $t > 1$, we consider $\phi_{t,y}(\cdot) := p_{t-1}^{D}(\cdot, y) \in C_{0} ( \mathbb{R}^{d} )$, see \cite{MR2722789} or \cite[Proposition 1.19]{MR1329992}. Using Chapman--Kolmogorov, we get
\begin{align*}
\eta_{t, Q}(y) & =  \lim_{x \to Q} \frac{p_{t}^{D}(x,y)}{\mP^{x} ( \tau_{D} > 1 )} \nonumber \\
& =  \lim_{x \to Q} \frac{\int_{D} p_{t - 1}^{D} (z,y) p_{1}^{D} (x,z)\, dz}{\mP^{x} ( \tau_{D} > 1 )} \nonumber \\
& =  \lim_{x \to Q} \frac{( P_{1}^{D} p_{t - 1}^{D} ( \cdot ,y) )(x)}{\mP^{x} ( \tau_{D} > 1 )} \nonumber \\
& =  \lim_{x \to Q} m_{x} ( p_{t - 1}^{D} (\cdot, y) ). \nonumber
\end{align*}
By \eqref{limit}, the existence of $\eta_{t, Q}(y)$ for $t > 1$ follows:
\begin{equation}
\eta_{t, Q}(y) = m_{Q} ( p_{t - 1}^{D} (\cdot, y) ). \nonumber
\end{equation}
Note that the threshold $t>1$ is arbitrary, that is, $1$ can be replaced with any $t_0>0$. Indeed, the results of this section can be readily reformulated with $t_0$ in place of $1$, for instance, Lemma~\ref{Thm3} may be strengthened to assert that for every $t_0>0$,
$$\lim\limits_{x\to Q} \frac{\mP^x(\tau_D>t_0)}{G_D(x,x_0)} = \int_D\int_D M_D^{x_0}(y,Q)p_{t_0}^D(y,z)\kappa_D(z)\, dz\, dy.$$
Accordingly, we get the existence of the limit
\begin{equation}\label{eq:restated}\lim\limits_{x\to Q} \frac{\mP^x(\tau_D > 1)}{\mP^x(\tau_D > t_0)}.\end{equation}
We can also reuse the above arguments to get for all $t>t_0$, the existence of
\begin{equation}\label{eq:t0}\lim\limits_{x\to Q} \frac{p^D_t(x,y)}{\mP^x(\tau_D > t_0)}.\end{equation}
Of course, \eqref{eq:restated} and \eqref{eq:t0} give the existence of $\eta_{t,Q}(y)$ for $t>t_0$.

The equation \eqref{Estimation} follows from equation \eqref{Estim}, and the equation \eqref{Entrance} follows from the Chapman--Kolmogorov equations and the dominated convergence theorem (see  \cite[(27)]{MR2722789}):
\begin{equation*}
\eta_{t + s, Q}(y) = \lim_{x \to Q} \int_{D} \frac{p_{t}^{D} (x,z)}{\mP^{x} ( \tau_{D} > 1 )} p_{s}^{D}(z,y)\, dz = \int_{D} \eta_{t,Q}(z) p_{s}^{D}(z,y)\, dz.
\end{equation*}
The fact that $\wtn$ and $\eta^{x_0}$ exist follows from the existence of $\eta$ and from Lemma~\ref{Thm3}.
\end{proof}
\begin{corollary}\label{cor:MYcont}
	The functions $(t,y)\mapsto \eta_{t,Q}(y), \wtn_{t,Q}(y), \eta^{x_0}_{t,Q}(y)$ are continuous on $(0,\infty)\times \overline{D}$.
\end{corollary}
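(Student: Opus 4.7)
The plan is to reduce the continuity claims for all three functions to that of $\eta_{t,Q}$. By Lemma~\ref{Thm3} together with its extension to an arbitrary reference time (as remarked at the end of the proof of Theorem~\ref{Main1}), we have $\lim_{y\to Q}\mP^y(\tau_D>1)/G_D(x_0,y)=C_1\in(0,\infty)$ and $\lim_{y\to Q}p_{t_0}^D(x_0,y)/\mP^y(\tau_D>1)=\eta_{t_0,Q}(x_0)\in(0,\infty)$. Dividing numerators and denominators in \eqref{eq:nxo} and \eqref{eq:paraMK} by $\mP^y(\tau_D>1)$ before passing to the limit yields $\eta^{x_0}_{t,Q}(y)=C_1\,\eta_{t,Q}(y)$ and $\wtn_{t,Q}(y)=\eta_{t,Q}(y)/\eta_{t_0,Q}(x_0)$, with multiplicative constants independent of $(t,y)$. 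Hence it suffices to prove continuity of $\eta_{t,Q}$ on $(0,\infty)\times\overline{D}$, where we set $\eta_{t,Q}(y):=0$ for $y\in\partial D$ (the natural extension, consistent with the vanishing of the Dirichlet heat kernel at the boundary and with \eqref{Estimation}).

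For interior continuity, fix $(t^*,y^*)\in(0,\infty)\times D$ and pick $s_0\in(0,t^*)$. The Yaglom-type construction in the proof of Theorem~\ref{Main1}, carried out with $s_0$ in place of $1$ as the authors explicitly observe there, produces a probability measure $m_Q^{s_0}$ on $D$ and the representation
$$\eta_{t,Q}(y)=\int_D p_{t-s_0}^D(z,y)\,m_Q^{s_0}(dz),\qquad t>s_0,\ y\in D.$$
Since $p_s^D(z,y)$ is jointly continuous on $(0,\infty)\times D\times D$ and, by \eqref{DensityApprox}, $p_s^D(z,y)\le c\,s^{-d/\alpha}$ uniformly on compact $s$-intervals, dominated convergence (the integrator being a probability measure) gives $\eta_{t_n,Q}(y_n)\to\eta_{t^*,Q}(y^*)$ for any sequence $(t_n,y_n)\to(t^*,y^*)$ with $y_n\in D$.

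For $y^*\in\partial D$, let $(t_n,y_n)\to(t^*,y^*)$ with $y_n\in D$ and choose $s_0\in(0,t^*)$ as above; then $t_n-s_0\ge\delta>0$ for some $\delta$ and all large $n$. By the factorization \eqref{factorization},
$$p_{t_n-s_0}^D(z,y_n)\lesssim p_{t_n-s_0}(z,y_n)\,\mP^{y_n}(\tau_D>t_n-s_0)\lesssim c\,\mP^{y_n}(\tau_D>\delta),$$
uniformly in $z\in D$. Since the Lipschitz boundary is regular, Blumenthal's zero-one law gives $\mP^y(\tau_D>\delta)=0$ for $y\in\partial D$, and standard continuity of the survival probability yields $\mP^{y_n}(\tau_D>\delta)\to 0$ as $y_n\to y^*$. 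Hence $p_{t_n-s_0}^D(\cdot,y_n)\to 0$ uniformly on $D$, and dominated convergence in the representation above yields $\eta_{t_n,Q}(y_n)\to 0=\eta_{t^*,Q}(y^*)$. The main technical point is the insensitivity of the $m_Q$-construction to the choice of reference time $s_0$, which is already granted by the remarks in the proof of Theorem~\ref{Main1}; the remainder is a straightforward combination of dominated convergence with the boundary decay encoded in \eqref{factorization}.
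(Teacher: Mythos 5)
Your proof establishes a different statement from the one the paper proves and later uses. Read in isolation the corollary's notation is ambiguous, but the paper's own proof (and the application in Theorem~\ref{th:repr}, where the corollary is invoked to conclude that $(s,y)\mapsto p_{t-s}^D(x,y)/\mP^y(\tau_D>1)$ belongs to $C([0,t-\eps]\times\overline{D})$) makes clear that the variable ranging over $\overline{D}$ is the one sitting in the \emph{denominator}, i.e., the subscript of the extended kernels \eqref{e.eeta}: one must show that $(t,y)\mapsto p_t^D(x,y)/\mP^y(\tau_D>1)$ (and its two companions) extends continuously to $(0,\infty)\times\overline{D}$, the boundary values being the strictly positive Martin kernels $\eta_{t,Q}(x)$ of Theorem~\ref{Main1}. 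You instead prove continuity of $(t,x)\mapsto\eta_{t,Q}(x)$ for a fixed $Q\in\partial D$, extended by $0$ on $\partial D$; that is essentially the continuity in $t$ and $x$ already asserted in Theorem~\ref{Main1} combined with the boundary decay of Lemma~\ref{lem:nonsingular}, and it is not what the corollary is needed for. The misreading is visible in your reduction step: in the intended reading the factors linking the three kernels, namely $\mP^y(\tau_D>1)/G_D(x_0,y)$ and $\mP^y(\tau_D>1)/p_{t_0}^D(x_0,y)$, are \emph{not} constants independent of $(t,y)$---they depend on the continuity variable $y$ and only tend to $C_1$ and $1/\eta_{t_0,Q}(x_0)$ as $y\to Q$.

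Consequently the genuinely hard point of the paper's argument is absent from yours. Theorem~\ref{Main1} gives, for each fixed $t$, convergence of $p_t^D(x,y)/\mP^y(\tau_D>1)$ as $D\ni y\to Q$; to upgrade this to \emph{joint} continuity at $(t,Q)$ one needs the convergence to be locally uniform in $t$, which the paper obtains by proving the uniform-in-$y$ modulus \eqref{eq:timecont} for $t\mapsto p_t^D(x,y)/\mP^y(\tau_D>1)$, via Chapman--Kolmogorov, the factorization \eqref{factorization}, and the gradient bounds of Kulczycki and Ryznar. Nothing in your argument plays this role. Within your own reading the argument is essentially sound, apart from the harmless omission of a $Q$- and $s_0$-dependent normalizing constant in the identity $\eta_{t,Q}(y)=\int_D p_{t-s_0}^D(z,y)\,m_Q^{s_0}(dz)$, since $m^{s_0}$ is normalized by $\mP^{\cdot}(\tau_D>s_0)$ rather than $\mP^{\cdot}(\tau_D>1)$; but you should redo the proof for the ratios $(t,y)\mapsto p_t^D(x,y)/\mP^y(\tau_D>1)$, $p_t^D(x,y)/G_D(x_0,y)$, and $p_t^D(x,y)/p_{t_0}^D(x_0,y)$.
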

\begin{proof}
	By Theorem~\ref{Main1} and the fact that $p_t^D(x,y)$ and $\mP^y(\tau_D>1)$ are continuous for $(t,y)\in (0,\infty)\times D$, and separated from 0 in sufficiently small neighborhood of any point $(t,y)$, it suffices to verify that for any sequence $((t_n,y_n))\subset (0,\infty)\times D$ such that $(t_n,y_n) \to (t,Q)\in (0,\infty)\times \partial D$, we have 
	\begin{align}\label{eq:tnyn}
\lim_{n\to\infty} \frac{p_{t_n}^D(x,y_n)}{\mP^{y_n}(\tau_D>1)} = \eta_{t,Q}(x).
	\end{align}
	Furthermore, by Theorem~\ref{Main1}, in order to obtain \eqref{eq:tnyn} it suffices to prove that for any $t>0$ there exists a modulus of continuity $\omega$ independent of $y$ such that
	\begin{align}\label{eq:timecont}
	\bigg|\frac{p_{t+\varepsilon}^D(x,y)-p_{t}^D(x,y)}{\mP^y(\tau_D>1)}\bigg| \leq \omega(\varepsilon),\quad \varepsilon>0.
	\end{align}
    By Chapman--Kolmogorov, we have
    \begin{align*}
        \bigg|\frac{p_{t+\varepsilon}^D(x,y)-p_{t}^D(x,y)}{\mP^y(\tau_D>1)}\bigg| &\leq \int_D \frac{|p_t^D(z,y) - p_t^D(x,y)|p_{\varepsilon}^D(x,z)}{\mP^y(\tau_D>1)} \, dz\\
        &=\bigg(\int_{D\setminus B(x,\delta_D(x)/2)} + \int_{B(x,\delta_D(x)/2)}\bigg) \frac{|p_t^D(z,y) - p_t^D(x,y)|p_{\varepsilon}^D(x,z)}{\mP^y(\tau_D>1)} \, dz =: I_1+I_2.
    \end{align*}
    Then by \eqref{factorization},
    \begin{align*}
        I_1 \leq \int_{D\setminus B(x,\delta_D(x)/2)} p_\varepsilon^D(x,z)\, dz \leq \int_{D\setminus B(x,\delta_D(x)/2)} p_\varepsilon(x,z)\, dz \leq \omega(\epsilon).
    \end{align*}
    For $I_2$, we use the gradient bounds of Kulczycki and Ryznar \cite[Theorem~1.1]{MR3767143} and \eqref{factorization}:
    \begin{align*}
        I_2 &\leq \int_{B(x,\delta_D(x)/2)} \frac{|p_t^D(z,y) - p_t^D(x,y)|p_{\varepsilon}^D(x,z)}{\mP^y(\tau_D>1)} \, dz\\
        &\leq \int_{B(x,\delta_D(x)/2)} |x-z| \frac{\|\nabla_x p_t^D(\cdot,y)\|_{L^\infty(B(x,\delta_D(x)/2)}}{\mP^y(\tau_D>1)} p_\varepsilon^D(x,z)\, dz\\
        &\lesssim \int_{B(x,\delta_D(x)/2)} |x-z| \frac{\| p_t^D(\cdot,y)\|_{L^\infty(B(x,\delta_D(x)/2)}}{\mP^y(\tau_D>1)} p_\varepsilon^D(x,z)\, dz\\
        &\lesssim \int_{B(x,\delta_D(x)/2)} |x-z|p_\varepsilon^D(x,z)\, dz \leq \int_{B(x,\delta_D(x)/2)} |x-z|p_\varepsilon(x,z)\, dz \leq \omega(\varepsilon).
    \end{align*}
    Thus, $I_1+I_2 \leq \omega(\varepsilon)$, which ends the proof for $\eta$. For $\wtn$ and $\eta^{x_0}$, we use  Lemma \ref{L38} and \eqref{Estimation}.
\end{proof} 

Here is a rough result about the behavior of $\eta_{s,Q}(x)$ away from the singularity at $(0,Q)$.
 \begin{lemma}\label{lem:nonsingular}
     If $Q\in \partial D$ then $(s,x)\mapsto \eta_{s,Q}(x)$ is locally bounded on $((0,\infty)\times \mR^d)\setminus \{(0,Q)\}$. Furthermore, if $t=0$ or $y\in \partial D$, but $(t,y)\neq (0,Q)$, then
$\eta_{s,Q}(x) \to 0$ as $(s,x) \to (t,y)$.
 \end{lemma}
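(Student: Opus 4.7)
My plan is to separately establish local boundedness and the convergence to $0$, splitting the latter according to whether $t>0$ or $t=0$.

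Local boundedness on $(0,\infty)\times\mR^d$ follows by combining Corollary~\ref{cor:MYcont}, which gives joint continuity of $\eta$ on $(0,\infty)\times\overline{D}$, with the trivial identity $\eta_{s,Q}(x)=0$ for $x\in D^c$ (since $p_s^D(x,\cdot)\equiv 0$ there). For convergence to $0$ at $(t,y)$ with $t>0$ and $y\in\partial D$, I would first show that $\eta_{s,Q}(y)=0$ for every $s>0$ and $y\in\partial D$: the exterior cone condition and Blumenthal's zero-one law yield $\mP^y(\tau_D=0)=1$, so \eqref{eq:dhkdef} gives $p_s^D(y,\cdot)\equiv 0$, hence $\eta_{s,Q}(y)=0$. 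Joint continuity of $\eta$ on $(0,\infty)\times\overline{D}$ together with the vanishing on $D^c$ then forces the desired convergence in every direction of approach.

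The delicate case is $(t,y)=(0,y)$ with $y\ne Q$. Starting from \eqref{factorization} for $s\in(0,1]$, dividing by $\mP^z(\tau_D>1)$, letting $z\to Q$, and using the straightforward extension of Lemma~\ref{Thm3} to general $s$ (the proof is identical), I would obtain
\[
\eta_{s,Q}(x)\le C\,\mP^x(\tau_D>s)\,p_s(x,Q)\,L(s),\qquad L(s):=\lim_{z\to Q}\frac{\mP^z(\tau_D>s)}{\mP^z(\tau_D>1)}.
\]
Because $|y-Q|>0$, \eqref{DensityApprox} gives $p_s(x,Q)\lesssim s$ for $(s,x)$ close to $(0,y)$, so the problem reduces to $sL(s)\to 0$ as $s\to 0^+$.

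This last step is the main obstacle. The argument rests on two properties of $L$. First, $L$ is nonincreasing: $s\mapsto\mP^z(\tau_D>s)$ is nonincreasing for each $z$, and the pointwise limit preserves monotonicity. Second, $L\in L^1((0,\infty))$: Fubini's theorem, the extended Lemma~\ref{Thm3}, and the identity $(G_D\kappa_D)(w)=\mP^w(\tau_D<\infty)=1$ on $D$ (following from \eqref{eq:IWbasic} along the lines of the proof of Lemma~\ref{L38}) give
\[
\int_0^\infty L(s)\,ds=\frac{1}{C_1}\int_D M_D^{x_0}(w,Q)\,dw,
\]
and the right-hand integral is finite because the Jakubowski estimates \eqref{eq:GDest} make the singularity of $M_D^{x_0}(\cdot,Q)$ at $Q$ integrable on bounded Lipschitz $D$. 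These two properties together yield $sL(s)\le 2\int_{s/2}^s L(u)\,du\to 0$, completing the argument.
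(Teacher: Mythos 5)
The main issue is your reliance on Corollary~\ref{cor:MYcont} for ``joint continuity of $\eta$ on $(0,\infty)\times\overline D$'' in the spatial argument. That corollary does not say this: as its proof (the sequence $(t_n,y_n)\to(t,Q)$ appears in the \emph{denominator} $\mP^{y_n}(\tau_D>1)$) and its later use in Theorem~\ref{th:repr} make clear, the continuity established there is in the pole/normalization variable, i.e.\ of $(t,y)\mapsto p_t^D(x,y)/\mP^y(\tau_D>1)=\eta_{t,y}(x)$ from \eqref{e.eeta}, for fixed $x\in D$. Continuity of $x\mapsto\eta_{s,Q}(x)$ up to $\partial D$ with boundary value $0$ is essentially the assertion you are trying to prove, so your argument for the case $t>0$, $y\in\partial D$ (pointwise vanishing of $p_s^D(y,\cdot)$ on $\partial D$ plus ``joint continuity'') is circular, and your first paragraph does not actually yield local boundedness near the lateral boundary either. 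The repair is short and is what the paper does: for $s\ge\varepsilon$, \eqref{factorization} together with $\mP^\xi(\tau_D>s)\le\mP^\xi(\tau_D>\varepsilon)\approx\mP^\xi(\tau_D>1)$ uniformly in $\xi$ (see the first lines of the proof of Lemma~\ref{lem:survmonot}) and $p_s(x,\xi)\le c\varepsilon^{-d/\alpha}$ gives $\eta_{s,Q}(x)\le C(\varepsilon)\,\mP^x(\tau_D>\varepsilon)$, which is bounded on $D$ and tends to $0$ as $x\to\partial D$.

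Your treatment of the remaining case $(s,x)\to(0,y)$ with $y\ne Q$ is correct and takes a genuinely different route from the paper. The paper bounds $s\,\limsup_{\xi\to Q}\mP^\xi(\tau_D>s)/\mP^\xi(\tau_D>1)$ by $Cs^{1-\sigma}$ via the weak-scaling estimate of Lemma~\ref{lem:survmonot}; you instead deduce $sL(s)\to0$ from monotonicity of $L$ combined with $L\in L^1(0,\infty)$, the latter following from Tonelli, $(G_D\kappa_D)\equiv1$ on $D$, and integrability of $M_D^{x_0}(\cdot,Q)$ (immediate from Lemma~\ref{Lemma7} with $f\equiv1$). This is softer and bypasses the geometric analysis behind Lemma~\ref{lem:survmonot}, at the cost of being non-quantitative; it also supplies the local bound near $(0,y)$, $y\ne Q$, since $sL(s)\le2\|L\|_{L^1(0,1)}$ there.
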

 \begin{proof}
     By \eqref{factorization} and \eqref{DensityApprox}, we have
     \begin{align*}
         \eta_{s,Q}(x) &= \lim\limits_{D\ni\xi\to Q} \frac{p_s^D(x,\xi)}{\mP^\xi(\tau_D>1)} \lesssim \limsup\limits_{D\ni\xi\to Q} \frac{\mP^\xi(\tau_D>s)}{\mP^\xi(\tau_D>1)}p_s(x,\xi)\mP^x(\tau_D > s)\\
         &\lesssim |x-Q|^{-d-\alpha}\mP^x(\tau_D > s)\limsup\limits_{D\ni\xi\to Q} \frac{s\mP^\xi(\tau_D>s)}{\mP^\xi(\tau_D>1)}.
     \end{align*}
If $|x-Q|\geq \varepsilon$ then $\eta_{s,Q}(x)$ is
bounded---it even converges to $0$ as $s\to 0$---see
Lemma~\ref{lem:survmonot}.
If $s>\varepsilon$ then we use the approximate factorization of $p^D$---and the fact that $\mP^x(\tau_D > s)\to 0$ as $x\to y\in \partial D$. 
 \end{proof}
Let us summarize estimates of $\eta$ that follow from the estimates of the Dirichlet heat kernel.
 \begin{corollary}\label{cor:estimates}
     If $D$ is $C^{1,1}$, then
     \begin{align}\label{eq:C11est}
         \eta_{t,Q}(x) \approx\begin{cases} \frac{1}{\sqrt{t}}\bigg(1\wedge \frac{\delta_D^{\alpha/2}(x)}{\sqrt{t}}\bigg)p_t(x,Q),\quad &t\in (0,1),\ x\in D,\ Q\in \partial D,\\
         e^{-\lambda_1 t} \delta_D(x)^{\alpha/2},\quad &t\in [1,\infty),\ x\in D,\ Q\in \partial D.
         \end{cases}
     \end{align}
     If $D$ is Lipschitz, then 
     \begin{align}\label{eq:Liplarge}\eta_{t,Q}(x) \approx
         e^{-\lambda_1 t} \mP^x(\tau_D > t),\quad t\in [1,\infty),\ x\in D,\ Q\in \partial D,
    \end{align}
and 
   \begin{align}\label{eq:Lipsmalls}
   \eta_{t,Q}(x) \approx
\frac{\mP^x(\tau_D>t)p_t(x,Q)}{\Phi(A_{t^{1/\alpha}}(Q))},
\quad t\in (0,1),\ x\in D,\ Q\in \partial D.
    \end{align} 
Furthermore, there exist $0<\sigma_1\leq \sigma_2<1$ such that
\begin{align}\label{eq:Lipsmall}
         t^{-\sigma_1}\lesssim \frac{\eta_{t,Q}(x)}{\mP^x(\tau_D>t)p_t(x,Q)} \lesssim t^{-\sigma_2},\quad t\in (0,1),\ x\in D,\ Q\in \partial D.
    \end{align}
 \end{corollary}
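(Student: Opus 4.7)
The plan is to obtain all four asymptotic statements by inserting the sharp heat kernel bounds recorded in Section~\ref{sec:prelim} into the definition $\eta_{t,Q}(x)=\lim_{D\ni y\to Q} p_t^D(x,y)/\mP^y(\tau_D>1)$ and carefully passing to the limit. In each case the boundary-decaying factor in $p_t^D(x,y)$ (either $\delta_D(y)^{\alpha/2}$, $\varphi_1(y)$, or $\mP^y(\tau_D>t)$) matches the boundary decay of $\mP^y(\tau_D>1)\approx \varphi_1(y)\approx \Phi(y)$ provided by Remark~\ref{r.por}, so these factors cancel and the residual $y$-dependence can be sent to the boundary.

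For \eqref{eq:C11est} I would use \eqref{eq:CKS} for $t\in(0,1)$ and \eqref{eq:largetimes} for $t\geq 1$. Since $D$ is $C^{1,1}$, $\varphi_1(y)\approx \delta_D(y)^{\alpha/2}$, hence $\mP^y(\tau_D>1)\approx \delta_D(y)^{\alpha/2}$. In the small-$t$ case, $\delta_D(y)\to 0$ forces $1\wedge \delta_D(y)^{\alpha/2}/\sqrt{t}$ to equal $\delta_D(y)^{\alpha/2}/\sqrt{t}$ for $y$ close enough to $Q$, which cancels the denominator, and the continuity of $p_t(x,\cdot)$ yields $(1/\sqrt t)(1\wedge \delta_D(x)^{\alpha/2}/\sqrt t)\,p_t(x,Q)$ in the limit. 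For $t\geq 1$, the $\delta_D(y)^{\alpha/2}$ factor in \eqref{eq:largetimes} cancels directly.

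For the Lipschitz bounds I would proceed analogously. For \eqref{eq:Liplarge}, plugging \eqref{eq:largetimesgen} into the definition, the $\varphi_1(y)$ factor cancels and one obtains $\eta_{t,Q}(x)\approx e^{-\lambda_1 t}\varphi_1(x)$, which is then rewritten in the stated form by means of \eqref{e.por} and \eqref{eq:largetimesgen}. For \eqref{eq:Lipsmalls}, insert the approximate factorization \eqref{factorization} to get
\[
\frac{p_t^D(x,y)}{\mP^y(\tau_D>1)}\approx \mP^x(\tau_D>t)\,p_t(x,y)\,\frac{\mP^y(\tau_D>t)}{\mP^y(\tau_D>1)}.
\]
The missing ingredient is the two-sided bound $\mP^y(\tau_D>s)\approx \Phi(y)/\Phi(A_{s^{1/\alpha}}(y))$ for $y$ near $\partial D$, a consequence of the approximate factorization of the Green function in \cite{MR1991120} together with the factorization results in \cite{MR2722789}. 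The $\Phi(y)$ factors cancel, and as $y\to Q$ the comparability of $\Phi$ at reference points of comparable scale (Remark~\ref{r.c}) gives $\Phi(A_1(y))\to \Phi(A_1(Q))\approx 1$ and $\Phi(A_{t^{1/\alpha}}(y))\to \Phi(A_{t^{1/\alpha}}(Q))$ up to multiplicative constants, yielding \eqref{eq:Lipsmalls}.

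Finally, \eqref{eq:Lipsmall} follows from \eqref{eq:Lipsmalls} once one shows that, for some $0<\beta_1\leq \beta_2<\alpha$ depending only on $\unD$, $r^{\beta_2}\lesssim \Phi(A_r(Q))\lesssim r^{\beta_1}$ holds uniformly in $Q\in\partial D$ and $r\in(0,r_0/2)$; the upper bound comes from iterating the boundary Harnack principle along a chain of boxes of geometrically decaying size, and the lower bound from the interior cone condition combined with \eqref{eq:GDest}. Setting $\sigma_i=\beta_i/\alpha$ gives $\sigma_i\in(0,1)$. The hard step I expect is step \eqref{eq:Lipsmalls}, specifically the uniform (in $y$ and $t$) control of $\mP^y(\tau_D>t)/\mP^y(\tau_D>1)$ as $y\to Q$, which is where the Lipschitz (as opposed to $C^{1,1}$) nature of $D$ really enters and where one must rely on the somewhat delicate boundary behavior of $\Phi$ captured by \eqref{eq:GDest} and the ensuing comparabilities.
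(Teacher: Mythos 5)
Your proposal is correct and follows essentially the same route as the paper: \eqref{eq:C11est} and \eqref{eq:Liplarge} from \eqref{eq:CKS}, \eqref{eq:largetimes}, \eqref{eq:largetimesgen} with the boundary factors cancelling against $\mP^y(\tau_D>1)\approx\varphi_1(y)\approx\Phi(y)$, and \eqref{eq:Lipsmalls}--\eqref{eq:Lipsmall} from \eqref{factorization} together with $\mP^y(\tau_D>t)/\mP^y(\tau_D>1)\approx 1/\Phi(A_{t^{1/\alpha}}(y))$ and two-sided power bounds on $\Phi(A_r(Q))$ (the paper gets these from \cite[Lemmas~3 and 5]{MR1438304} and packages the upper bound as Lemma~\ref{lem:survmonot}). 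The only cosmetic difference is that the upper bound $\Phi(A_r(Q))\lesssim r^{\beta_1}$ comes from the elementary exit-probability iteration of \cite[Lemma~3]{MR1438304} rather than from the boundary Harnack principle per se.
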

 \begin{proof}
 The estimate \eqref{eq:C11est} follows from \eqref{eq:CKS} and \eqref{eq:largetimes}. By \cite[Theorem~1.1]{MR1643611} and \eqref{factorization}, $\mP^y(\tau_D>1) \approx \varphi_1(y)$, so \eqref{eq:Liplarge} is a consequence of \eqref{eq:largetimesgen}. It remains to prove \eqref{eq:Lipsmalls} and
 \eqref{eq:Lipsmall}. By \eqref{factorization},
 \begin{align}\label{e.ape}
     \eta_{t,Q}(x)
     \approx      \mP^x(\tau_D>t)p_t(x,Q)
\lim\limits_{y\to Q} \frac{\mP^y(\tau_D>t)}{\mP^y(\tau_D > 1)}.
	\end{align} 
 By \cite[Theorem~2]{MR2722789} and \eqref{e.por}, 
 \begin{align}\label{e.ipp}
    \frac{\mP^y(\tau_D>t)}{\mP^y(\tau_D>1)} \approx  \frac 1 {\Phi(A_{t^{1/\alpha}}(y))}.
 \end{align}
 By geometrical considerations, we can choose points $A_{t^{1/\alpha}}(y)$ converging to a point in $\mathcal A_{t^{1/\alpha}}(Q)$. This proves \eqref{eq:Lipsmalls}.
 By \eqref{e.ape} and Lemma~\ref{lem:survmonot}, we get the upper bound in \eqref{eq:Lipsmall}. The lower bound follows from \eqref{eq:Lipsmalls} and \cite[Lemma~3]{MR1438304} with some $\sigma_1>0$. Of course, we must have $\sigma_1\le \sigma_2$ in \eqref{eq:Lipsmall}.
 \end{proof}
A consequence of Theorem~\ref{Main1} is the Yaglom-type limit, obtained in the thesis of the first author~\cite{Gavin}.
\begin{theorem} \label{Main2}
Suppose that $D$ is a bounded Lipschitz open set such that $0\in \partial D$ and $D\cup\{0\}$ is star-shaped at $0$. If $x \in D$ then for every Borel $A\subseteq\mR^d$,
\begin{equation*}
\lim_{t \to \infty} \mP^{x} \bigg( \frac{X_{t}}{t^{1/\alpha}} \in A \ \bigg| \ \bigg( \frac{X_{s}}{t^{1/\alpha}} \bigg)_{0 \leq s \leq t} \subset D \bigg) = m_{0}(A),
\end{equation*}
where $\mP^x(A_1|A_2) := \mP^x(A_1\cap A_2)/\mP^x(A_2)$ is the conditional probability and $m_{0}(A) := \int_A \eta_{1,{0}}(y)\, dy$.
\end{theorem}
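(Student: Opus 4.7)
The plan is to translate the conditional probability into the ratio $m_y(A)$ from \eqref{eq:mux}, at which point the claim reduces to the boundary asymptotics established in Theorem~\ref{Main1}. First, the scaling property of the isotropic $\alpha$-stable L\'evy process gives, under $\mP^x$, the identity in law $(X_{ts}/t^{1/\alpha})_{s\ge 0} \stackrel{d}{=} (X_s)_{s\ge 0}$, where the right-hand side is taken under $\mP^{x/t^{1/\alpha}}$. Applying this at $s=1$ for the marginal and on $s\in[0,1]$ for the survival event, and using that $\mP^y(X_{\tau_D}\in\partial D)=0$ for Lipschitz $D$ (so that the event $\{(X_s)_{0\le s\le 1}\subset D\}$ coincides with $\{\tau_D>1\}$ up to a null set), I obtain
\begin{equation*}
    \mP^{x}\bigl(X_t/t^{1/\alpha}\in A \,\big|\, (X_s/t^{1/\alpha})_{0\le s\le t}\subset D\bigr) = \frac{\int_A p_1^D(x/t^{1/\alpha},y)\,dy}{\mP^{x/t^{1/\alpha}}(\tau_D > 1)} = m_{x/t^{1/\alpha}}(A).
\end{equation*}

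Second, the star-shapedness of $D\cup\{0\}$ at $0\in\partial D$ guarantees $x/t^{1/\alpha}\in D$ for every $t\ge 1$ and $x/t^{1/\alpha}\to 0$ as $t\to\infty$. By the symmetry of $p_1^D$ and Theorem~\ref{Main1}, the density of $m_{x/t^{1/\alpha}}$ converges pointwise on $D$ to $\eta_{1,0}$:
\begin{equation*}
    \frac{p_1^D(x/t^{1/\alpha},y)}{\mP^{x/t^{1/\alpha}}(\tau_D>1)} \longrightarrow \eta_{1,0}(y), \quad y\in D.
\end{equation*}

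Finally, to pass to the limit in the integral for \emph{every} Borel set $A\subseteq \mR^d$---not merely for continuity sets of $m_0$---I would invoke the dominated convergence theorem: the uniform estimate \eqref{Estim} yields $p_1^D(x/t^{1/\alpha},y)/\mP^{x/t^{1/\alpha}}(\tau_D>1)\lesssim \mP^y(\tau_D>1)$, and the dominant is bounded by $1$ and hence integrable on the bounded set $D$. This delivers $m_{x/t^{1/\alpha}}(A)\to \int_A \eta_{1,0}(y)\,dy = m_0(A)$ for every Borel $A$, as claimed. All steps are essentially routine once Theorem~\ref{Main1} is in hand; the only conceptual point is the scaling identity in the first step---in particular the verification that the path-stays-in-$D$ event agrees with $\{\tau_D>1\}$ up to a null set under $\mP^y$ for Lipschitz $D$, which is what lets us identify the conditional probability with $m_{x/t^{1/\alpha}}(A)$ in the first place.
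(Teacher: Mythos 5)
Your proof is correct and follows essentially the same route as the paper: both reduce the conditional probability via stable scaling to $m_{t^{-1/\alpha}x}(A)$ (the paper rescales the domain and uses the heat-kernel scaling \eqref{HeatScaling}, you rescale the process, which is equivalent) and then invoke Theorem~\ref{Main1} together with the domination \eqref{Estim}. Your explicit dominated-convergence step to get convergence for \emph{all} Borel $A$, not just continuity sets, is a point the paper leaves implicit, but it is the same underlying argument.
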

\begin{proof}
Let $x\in D$, $t\geq 1$, and let $A \subset \mathbb{R}^{d}$ be Borel. Then we have
\begin{align*}
\mP^{x} \bigg( \frac{X_{t}}{t^{1/\alpha}} \in A \ \bigg| \ \bigg( \frac{X_{s}}{t^{1/\alpha}} \bigg)_{0 \leq s \leq t} \subset D \bigg) & =  \frac{\mP^{x} ( X_{t} \in t^{1/\alpha}A, \ ( X_{s})_{0 \leq s \leq t} \subset t^{1/\alpha}D )}{\mP^{x} (( X_{s})_{0 \leq s \leq t} \subset t^{1/\alpha}D )} \nonumber \\
& =  \frac{\int_{t^{1/\alpha} A} p_{t}^{t^{1/\alpha} D} (x,y)\, dy}{\int_{t^{1/\alpha}D} p_{t}^{t^{1/\alpha}D} (x,y)\, dy} \nonumber \\
& =  \frac{\int_{t^{1/\alpha} A} t^{-d/\alpha} p_{1}^{D} ( t^{-1/\alpha} x, t^{-1/\alpha} y )\, dy}{\int_{t^{1/\alpha} D}t^{-d/\alpha} p_{1}^{D} ( t^{-1/\alpha} x, t^{-1/\alpha} y )\, dy} \nonumber \\
& =  \frac{\int_{A} p_{1}^{D} ( t^{-1/\alpha} x, y )\, dy}{\int_{D}  p_{1}^{D} ( t^{-1/\alpha} x, y )\, dy}  =  m_{t^{-1/\alpha} x}(A), \nonumber
\end{align*}
where $m_{t^{-1/\alpha}x}$ is the measure defined in \eqref{eq:mux} above (note that $t^{-1/\alpha}x\in D$).
Therefore, by Theorem~\ref{Main1}, this probability approaches $m_{0}(A)$ as $t \to \infty$.
\end{proof}
\section{H\"older regularity}\label{sec:modulus}
This section is devoted to proving Theorems~\ref{lem:lpmodulus} and \ref{th:modulus}. The proof of Theorem~\ref{lem:lpmodulus} uses a mix of the boundary Harnack principle and interior H\"older regularity. Then Theorem~\ref{th:modulus} follows by using the formulas of Section~\ref{sec:Yaglom}, which enable us to relate the heat kernel regularity to the elliptic regularity.

Fix $n_0\geq 2$ such that the reference points $x_0$ belongs to $D_{n_0/2}$.
\begin{lemma}\label{lem:gdquot}
	There exists $p_0=p_0(d,\alpha,\unD)>1$ such that the family $\{(G_D(y,\cdot)/G_D(x_0,y))^p: y\in  D\}$ is uniformly integrable in $D$ for all $p\in[1,p_0)$.
\end{lemma}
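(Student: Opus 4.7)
The plan is to prove the strictly stronger quantitative bound
\[
\sup_{y \in D} \int_D \bigg( \frac{G_D(y,z)}{G_D(x_0,y)} \bigg)^{p_0}\, dz =: K_{p_0} < \infty
\]
for some $p_0 = p_0(d,\alpha,\unD) > 1$. Uniform integrability for each $p \in [1, p_0)$ then falls out of H\"older's inequality: for any Borel $E \subseteq D$,
\[
\int_E \bigg( \frac{G_D(y,z)}{G_D(x_0,y)} \bigg)^p\, dz \leq |E|^{\,1 - p/p_0}\, K_{p_0}^{\,p/p_0},
\]
whose right side tends to $0$ uniformly in $y$ as $|E|\to 0$.

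First I would dispose of $y$ deep inside $D$, say $\delta_D(y)\ge r_0/32$: such $y$ range over a compact subset of $D$, Jakubowski's estimate \eqref{eq:GDest} makes $G_D(x_0,y)$ bounded below by a constant depending only on $d,\alpha,\unD$, and the free-space bound $G_D(y,\cdot) \le c|{\,\cdot\,} - y|^{\alpha-d}$ yields a uniform $L^{p_0}$ bound for any $p_0 < d/(d-\alpha)$. In the main case $\delta_D(y) < r_0/32$, the point $y$ is uniformly bounded away from $x_0$, so a further application of \eqref{eq:GDest} with $x=x_0$ gives $G_D(x_0,y) \approx \Phi(y)$ (because $|x_0-y|^{\alpha-d}$, $\Phi(x_0)$, and $\Phi(A_{x_0,y})^{-2}$ are uniformly comparable to constants depending only on $\unD$). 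Substituting \eqref{eq:GDest} into the numerator produces
\[
\frac{G_D(y,z)}{G_D(x_0,y)} \lesssim |y-z|^{\alpha-d}\, \frac{\Phi(z)}{\Phi(A_{y,z})^2}, \qquad z \in D.
\]

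I would then split $D$ according to the mutual position of $y$, $z$, and $\partial D$. On $\{|y-z| < \delta_D(y)/2\}$ one may take $A_{y,z}=y$ and invoke interior Harnack inside $B(y,\delta_D(y))$ to get $\Phi(z) \approx \Phi(y)$, so the integrand is $\approx |y-z|^{(\alpha-d)p_0}\Phi(y)^{-p_0}$ and integrates to $\lesssim \delta_D(y)^{(\alpha-d)p_0 + d}\Phi(y)^{-p_0}$. On $\{|y-z| \ge \delta_D(y)/2,\, r_{y,z} > r_0/32\}$ we have $A_{y,z} = x_0$ and $\Phi(A_{y,z}) = 1$, so the integrand is at most $c\,|y-z|^{(\alpha-d)p_0}$, which integrates to a uniform constant. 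On the remaining intermediate region, the boundary comparability $\Phi(z) \lesssim \Phi(A_{y,z})$ coming from Harnack chain arguments (cf.\ Remark~\ref{r.c} and \cite[Lemma~13]{MR1991120}), combined with the Lipschitz lower bound $\Phi(A_{y,z}) \gtrsim \delta_D(A_{y,z})^s \gtrsim r_{y,z}^s \gtrsim |y-z|^s$, reduces the integrand to a multiple of $|y-z|^{(\alpha-d-s)p_0}$.

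The main obstacle I anticipate is producing the uniform lower bound $\Phi \gtrsim \delta_D^s$ with an exponent $s=s(d,\alpha,\unD)<\alpha$; this comes from the uniform interior cone condition enjoyed by Lipschitz open sets and the known growth rate of the $\alpha$-stable survival probability in a Lipschitz cone, whose exponent lies strictly below $\alpha$ for any positive aperture. Granted $s<\alpha$, any $p_0 \in \bigl(1,\, d/(d-\alpha+s)\bigr)$ makes the three regional contributions uniformly bounded: the first by matching powers of $\delta_D(y)$ via $\Phi(y) \gtrsim \delta_D(y)^s$, the second by direct integration over $D$, and the third by local $L^1$ integrability of $|y-z|^{(\alpha-d-s)p_0}$. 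This yields the uniform $L^{p_0}$ bound and hence the lemma.
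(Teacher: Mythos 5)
Your argument is correct and takes essentially the same route as the paper: both rest on Jakubowski's estimate \eqref{eq:GDest}, the Carleson-type bound $\Phi(z)\lesssim\Phi(A_{y,z})$, and the power-type lower bound on $\Phi$ at corkscrew points with exponent $\gamma=\gamma(d,\alpha,\unD)<\alpha$ from \cite[Lemma~5]{MR1438304}, arriving at the same threshold $p_0=d/(d-\alpha+\gamma)$. The only cosmetic differences are that the paper deduces uniform integrability from the pointwise domination $G_D(y,z)/G_D(x_0,y)\lesssim|y-z|^{\alpha-d}\vee|y-z|^{\alpha-\gamma-d}$ rather than from a uniform $L^{p_0}$ bound plus H\"older, and it merges your first and third regions into a single one handled by the Carleson estimate.
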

\begin{proof} 
	For $y\in D_{n_0}$ we have a crude bound:
	\begin{align*}
		G_D(y,z)/G_D(x_0,y) \leq C(d,\alpha,\unD) |y-z|^{\alpha-d},\quad z\in D.
	\end{align*}
	Considering the functions on the right-hand side, we see that $p_0 = d/(d-\alpha)$ will do. 
	
	From now on assume that $y\in D\setminus D_{n_0}$.  By \eqref{eq:GDest}, there exists $C=C(d,\alpha,\unD)$ such that
	\begin{align*}
		\frac{G_D(y,z)}{G_D(x_0,y)} \leq C \frac{|y-z|^{\alpha-d}}{|x_0-y|^{\alpha-d}} \frac{\Phi(z)\Phi(A_{x_0,y})^2}{\Phi(x_0)\Phi(A_{y,z})^2}.
	\end{align*}
	We immediately get that
	\begin{align*}
		\frac{G_D(y,z)}{G_D(x_0,y)} \leq C' |y-z|^{\alpha-d} \frac{\Phi(z)}{\Phi(A_{y,z})^2}.
	\end{align*}
	By the Carleson estimate \cite[Lemma~13]{MR1991120}, we further find that $\Phi(z)/\Phi(A_{y,z})\leq C(d,\alpha,\unD)$. If we let $U = D_{32/r_0}\cup (D\setminus B(y,r_0/32))$, then it follows that 
	\begin{align*}
		\frac{G_D(y,z)}{G_D(x_0,y)} \leq\begin{cases} C(d,\alpha,\unD) |y-z|^{\alpha-d},\quad &z\in U,\\  C(d,\alpha,\unD)|y-z|^{\alpha-d} \Phi(A_{y,z})^{-1},\quad &z\in D\setminus U.\end{cases}
	\end{align*}
	The definition of $A_{y,z}$ implies that for $z\in D\setminus U$ there exists $Q = Q(z)\in \partial D$ such that $y,z\in B(Q,3r)$ and $B(A_{y,z},\kappa r) \subset D\cap B(Q,6r)$. Using \cite[Lemma~5]{MR1438304}, we find that there exist $C=C(d,\alpha,\unD)$ and $\gamma = \gamma(d,\alpha,\unD)\in (0,\alpha)$ such that
	\begin{align*}
		\Phi(A_{y,z}) \geq C |A_{y,z} - Q(z)|^\gamma \geq C\kappa^\gamma r^\gamma\geq C\kappa^\gamma|y-z|^{\gamma}.
	\end{align*}
	Therefore,
	\begin{align}\label{eq:gdquotbound}
		\frac{G_D(y,z)}{G_D(x_0,y)} \leq C (|y-z|^{\alpha-d}\vee |y-z|^{\alpha-\gamma-d}),\quad y\in D\setminus D_{n_0},\ z\in D,
	\end{align}
	so the statement of the lemma holds for all $p\in [1,d/(d-\alpha+\gamma))$. We can take $p_0 = d/(d-\alpha+\gamma)$.
\end{proof}
The following lemma is a specific Carleson-type estimate.
\begin{lemma}\label{lem:gdauxest}
	Let $0<r<\delta_D(y)$, $|z-y|\geq 2r$, and $|v-y|\leq r$. There exists $C = C(d,\alpha,\unD)$ such that
	\begin{align*}
		G_D(z,v) \leq C G_D(z,y).
	\end{align*}
\end{lemma}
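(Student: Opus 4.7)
The plan is to split the proof into two geometric regimes based on the ratio $r/\delta_D(y)$. In the interior regime where $2r \le \delta_D(y)$, the ball $B(y,2r)$ is contained in $D$, and the hypothesis $|z-y| \ge 2r$ keeps the pole $z$ outside $B(y,2r)$. Therefore $G_D(z,\cdot)$ is nonnegative and $\alpha$-harmonic on $B(y,2r)$, and the standard Harnack inequality for $\alpha$-harmonic functions (\cite[Lemma~1]{MR1703823}) immediately gives $G_D(z,v) \le C(d,\alpha)G_D(z,y)$ for all $v \in B(y,r)$.

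In the complementary boundary regime $\delta_D(y)/2 < r < \delta_D(y)$, we have $r \approx \delta_D(y)$, and the plan is to invoke the Green function estimate \eqref{eq:GDest}. It yields
\begin{equation*}
\frac{G_D(z,v)}{G_D(z,y)} \lesssim \Bigl(\frac{|z-v|}{|z-y|}\Bigr)^{\!\alpha-d} \frac{\Phi(v)}{\Phi(y)} \Bigl(\frac{\Phi(A_{z,y})}{\Phi(A_{z,v})}\Bigr)^{\!2},
\end{equation*}
which I would bound factor by factor. The distance ratio is harmless because $|v-y|\le r\le|z-y|/2$ forces $|z-v|\in[|z-y|/2,3|z-y|/2]$. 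For the $A$-factor, the reference scales satisfy $r_{z,v}\approx r_{z,y}$ (since $|v-y|\le r \ll r_{z,y}$ and $\delta_D(v)\le 2\delta_D(y)\approx r$), and the sets $\mathcal{A}_{z,y}$ and $\mathcal{A}_{z,v}$ admit Harnack-comparable reference points, so $\Phi(A_{z,y})\approx\Phi(A_{z,v})$ by \cite[Lemma~13]{MR1991120}. For the critical factor $\Phi(v)/\Phi(y)$, I would use the Carleson-type estimate at scale $\delta_D(y)$: with $Q\in\partial D$ the closest boundary point to $y$, both $y$ and $v$ lie in $D\cap B(Q,c\,\delta_D(y))$, and applying Carleson to $G_D(x_0,\cdot)$ (\cite[Lemma~5]{MR1438304}) gives $\Phi(v)\lesssim\Phi(A_{\delta_D(y)}(Q))$, while interior Harnack at scale $\delta_D(y)$ gives $\Phi(A_{\delta_D(y)}(Q))\approx\Phi(y)$.

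The delicate step is the last one, the uniform bound $\Phi(v)\lesssim\Phi(y)$ in the boundary regime. A naive Harnack-chain argument from $y$ to $v$ inside $B(y,r)$ does not furnish a constant independent of $r$ and $\delta_D(y)$, since the chain may penetrate arbitrarily close to $\partial D$ as $r/\delta_D(y)\to 1^-$; then each Harnack step must shrink and the number of steps diverges. The Carleson estimate at the single scale $\delta_D(y)$ circumvents this by providing a uniform bound depending only on $d,\alpha,\unD$, the dependence on $\unD$ capturing the position of the reference point $x_0$ and the localization properties of $D$.
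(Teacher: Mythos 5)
Your proof is correct and, in the boundary regime, follows the paper's argument exactly: the two-sided estimate \eqref{eq:GDest} combined with $2|z-v|\geq|z-y|$, the comparability $r_{z,v}\approx r_{z,y}$ giving $\Phi(A_{z,v})\gtrsim\Phi(A_{z,y})$ via \cite[Lemma~13]{MR1991120}, and the Carleson/Harnack bound $\Phi(v)\lesssim\Phi(y)$ from \cite{MR1438304}. The case split is harmless but unnecessary---the paper runs this single argument for all $0<r<\delta_D(y)$, so your interior regime is subsumed by your boundary regime.
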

\begin{proof}
	Note that $2|z-v|\geq |z-y|$. By \eqref{eq:GDest}, there is $c = c(d,\alpha,\unD)$ such that 
	\begin{align*}
		G_D(z,v) \leq c \frac{\Phi(z)\Phi(v)}{\Phi(A_{z,v})^2} |z-v|^{\alpha-d} \leq 2^{d-\alpha} c \frac{\Phi(z)\Phi(v)}{\Phi(A_{z,v})^2} |z-y|^{\alpha-d}.
	\end{align*}
	By elementary calculations, we find that $2r_{z,v} \geq r_{z,y}$. By \cite[Lemma~13]{MR1991120} we therefore get $\Phi(A_{z,v})\geq c(d,\alpha,\unD)\Phi(A_{z,y})$. Furthermore, by \cite[Lemma~4 and 5]{MR1438304} we get $\Phi(v) \leq c(d,\alpha,\unD)\Phi(y)$. This ends the proof.
\end{proof}
The next lemma can be viewed as a more concrete, quantified version of \cite[Lemma~8]{MR2365478}. We give an interior-type H\"older regularity for ratios of Green functions, taking into account the singularity at the diagonal. The structure of the proof follows the boundary regularity approach of \cite[Lemma~16]{MR1438304}, but here the singularity can occur between the boundary and the arguments of the function, see Figure~\ref{fig:oko}.
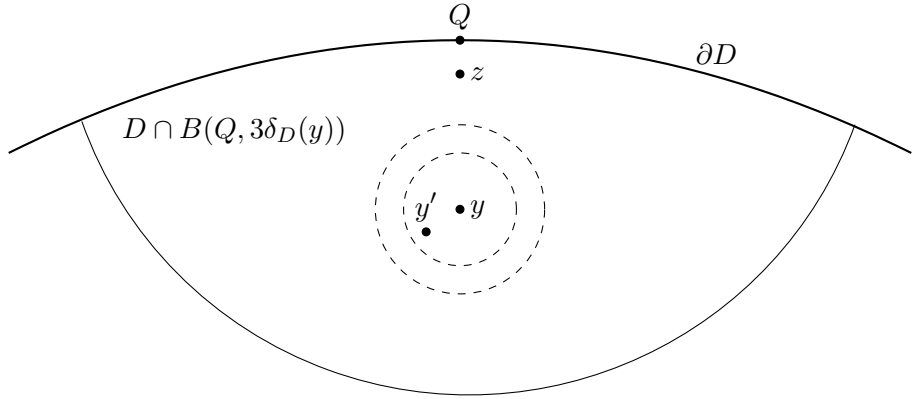
\begin{figure}[H]
    \centering
    \begin{tikzpicture}[scale=1.5]
        \draw[thick, smooth, domain=-4:4, variable=\x] plot({\x},{1-\x*\x/16});
        \filldraw (0,1) circle(1pt) node[above]{$Q$};
        \filldraw (0,0.7) circle(1pt) node[right]{$z$};
        \filldraw (0,-0.5) circle(1pt) node[right]{$y$};
        \filldraw (-0.3,-0.7) circle(1pt) node[above]{$y'$};
        \draw[dashed] (0,-0.5) circle(0.5);
        \draw[dashed] (0,-0.5) circle(0.75);
        \draw (3.5,0.25) arc (-20:-160.5:3.64);
        \draw (2,0.7) node[above right] {$\partial D$};
        \draw (-2,0.2) node {$D\cap B(Q,3\delta_D(y))$};
    \end{tikzpicture}
    \caption{Illustration for Lemma~\ref{lem:gdint}. The boundary Harnack principle cannot be used to estimate increments between $y$ and $y'$ because of the singularity at $z$. Instead we show regularity in the smaller ball using harmonicity in the larger ball.}
    \label{fig:oko}
\end{figure}
\begin{lemma}\label{lem:gdint}
	Let $y\in D$ and $Q\in \partial D$ satisfy $|Q-y| = \delta_D(y)$. Assume that $z\in D\cap B(Q,3\delta_D(y))$ and let $r = |z-y|/4$, so that $\overline{B(y,r)}\subset D$. Then there exist constants $C \geq 1$, $k_0\geq 4$, $\sigma\in (0,1]$, and $\gamma\in (0,\alpha)$, depending only on $d,\alpha,\unD$, such that for every $y'\in B(y,2^{-k_0}r)$ we have
	\begin{align*}
		\bigg|\frac{G_D(y,z)}{G_D(x_0,y)} - \frac{G_D(y',z)}{G_D(x_0,y')}\bigg| \leq C\bigg(\frac{|y-y'|}{r}\bigg)^{\sigma} r^{\alpha-d-\gamma}.
	\end{align*}
\end{lemma}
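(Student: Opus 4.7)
The strategy is an oscillation-decay iteration on concentric balls $B_k:=B(y,2^{-k}r)$, in the spirit of the boundary Harnack argument of \cite[Lemma~16]{MR1438304}, but adapted to this interior setting where the singularities of the Green function lie \emph{outside} the working ball rather than on $\partial D$. Write $u_1(x):=G_D(x,z)$, $u_2(x):=G_D(x,x_0)$, and $\psi:=u_1/u_2$. The assumption $|y-z|=4r$ guarantees $z\notin B(y,r)$, so $u_1$ is $\alpha$-harmonic on $B(y,r)$. If $x_0\in B(y,r)$ then $y$ is deep inside $D$ at the scale fixed by $\delta_D(x_0)$, and the claimed bound follows directly from \eqref{eq:GDest} and Lemma~\ref{lem:gdauxest}; hence I may assume $x_0\notin B(y,r)$, so that $u_2$ is likewise $\alpha$-harmonic on $B(y,r)$.

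Setting $M_k:=\sup_{B_k}\psi$ and $m_k:=\inf_{B_k}\psi$, the \emph{initial bound} $M_1\lesssim r^{\alpha-d-\gamma}$ comes from the interior Harnack inequality applied separately to $u_1$ and $u_2$ on $B(y,r/2)$, which reduces the sup of the ratio to $u_1(y)/u_2(y)$, together with \eqref{eq:GDest} and the Carleson-type lower bound $\Phi(A_{y,z})\gtrsim r^\gamma$ from \cite[Lemma~5]{MR1438304} (the same estimate that produced $\gamma\in(0,\alpha)$ in the proof of Lemma~\ref{lem:gdquot}). The heart of the proof is the oscillation-decay step
\begin{equation*}
M_{k+1}-m_{k+1}\leq (1-\theta)(M_k-m_k),\qquad k\geq k_0,
\end{equation*}
for some $\theta=\theta(d,\alpha)\in(0,1)$ and $k_0=k_0(d,\alpha,\unD)\geq 4$. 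For fixed such $k$, consider $w:=u_1-m_k u_2$ and $W:=M_k u_2-u_1$; both are $\alpha$-harmonic on $B(y,r)$ and nonnegative on $B_k$, and at least one of $w(y),W(y)$ exceeds $\tfrac12(M_k-m_k)u_2(y)$. Assuming without loss of generality that it is $w(y)$, I would use the explicit Poisson representation
\begin{equation*}
w(x)=\int_{B_{k+1}^c} w(\xi)\,P_{B_{k+1}}(x,\xi)\,d\xi,\qquad x\in B_{k+1}.
\end{equation*}
Splitting this integral over the shell $B_k\setminus B_{k+1}$ (where $w\geq 0$) and over $B_k^c$, the shell contribution is bounded below by $\gtrsim(M_k-m_k)u_2(x)$ via Harnack for $u_2$ and the explicit formula for the ball Poisson kernel.

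The main obstacle is the \emph{bad} contribution from $B_k^c$, where $w$ need not be sign-definite; this is precisely what prevents a direct application of the boundary Harnack principle. I would bound $|w(\xi)|\leq u_1(\xi)+M_1u_2(\xi)$ on $B_k^c\cap D$ and exploit that $P_{B_{k+1}}(x,\xi)$ carries the prefactor $\rho_{k+1}^\alpha=2^{-(k+1)\alpha}r^\alpha$, which shrinks with $k$; combined with a Green function estimate of the form $\int_D u_i(\xi)|\xi-y|^{-d-\alpha}\,d\xi\lesssim u_i(y)\cdot(\text{scale factor})$ coming from \eqref{eq:GDest}, this permits choosing $k_0$ large enough that the bad part is absorbed into half of the good part. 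Iterating the oscillation decay and pairing a given $y'\in B(y,2^{-k_0}r)$ with the unique $k\geq k_0$ satisfying $2^{-(k+1)}r\leq|y-y'|<2^{-k}r$ yields $|\psi(y')-\psi(y)|\leq M_k-m_k\leq C(|y-y'|/r)^\sigma M_1$ with $\sigma=-\log_2(1-\theta)$; combined with the initial bound $M_1\lesssim r^{\alpha-d-\gamma}$, this is the claim.
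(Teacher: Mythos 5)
Your architecture (oscillation decay on concentric balls, with the tail outside the current ball as the main obstruction) is the right one, and your initial bound $M_1\lesssim r^{\alpha-d-\gamma}$ is fine --- it is \eqref{eq:gdquotbound} with $|y-z|=4r$. But the absorption step, as you describe it, does not close. Write $\rho_k=2^{-k}r$ for the radius of $B_k$. Your bad part is bounded by $\rho_{k+1}^{\alpha}\int_{B_k^c\cap D}\bigl(u_1(\xi)+M_1u_2(\xi)\bigr)|\xi-y|^{-d-\alpha}\,d\xi$, and the ``scale factor'' in your Green-function estimate is forced to be $\rho_k^{-\alpha}$, not $r^{-\alpha}$: the shell $B_{k-1}\setminus B_k$ alone contributes $\approx u_i(y)\,\rho_k^{-\alpha}$ to $\int_{B_k^c}u_i(\xi)|\xi-y|^{-d-\alpha}\,d\xi$, since $u_i\approx u_i(y)$ there by interior Harnack. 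Hence the bad part is $\approx(\rho_{k+1}/\rho_k)^{\alpha}\bigl(u_1(y)+M_1u_2(y)\bigr)\approx c\,M_1u_2(y)$, where $c$ depends only on the (fixed) ratio of consecutive radii; it does \emph{not} decay in $k$. The good part, on the other hand, is $\approx(M_k-m_k)u_2(y)$ and does decay. Enlarging $k_0$ (i.e., widening the gap between consecutive balls) only makes $c$ a fixed small constant: once $M_k-m_k\lesssim c\,M_1$, the bad part can no longer be absorbed, the iteration stalls, and you obtain oscillation decay only down to a fixed positive level --- not H\"older continuity.

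The repair requires using that on each intermediate shell $B_l\setminus B_{l+1}$, $k_0\leq l<k$, the negative part of $w=u_1-m_ku_2$ is already controlled by the previously established oscillations, $w^-\leq(M_l-m_l)u_2$, and then checking the compatibility between the decay rate $1-\theta$ and the kernel weights $(\rho_{k+1}/\rho_{l+1})^{\alpha}$ of the shells (the Caffarelli--Silvestre-type growth lemma); the crude bound $|w|\leq u_1+M_1u_2$ throws exactly this information away. The paper sidesteps the issue by running a multiplicative rather than additive scheme, following \cite[Lemma~16]{MR1438304}: with $\Pi_l=B_l\setminus B_{l+1}$ and $u_k^l(x)=\mE^x[u_1(X_{\tau_{B_k}});X_{\tau_{B_k}}\in\Pi_l]$ (similarly $v_k^l$ for $u_2$), it shows first that each far shell contributes to $u_1$ and $u_2$ \emph{separately} at most $C(q^{k_0})^{k-l-1}$ times $u_1(x)$, resp.\ $u_2(x)$ (this uses Lemma~\ref{lem:gdauxest} and the explicit Poisson kernel of the ball), and second that the ratio of the local parts $u_{k+1}^k/v_{k+1}^k$ has genuine oscillation decay via Harnack; combining the two yields $\sup_{B_k}u_1/u_2\leq(1+c\zeta^k)\inf_{B_k}u_1/u_2$, which is what the additive scheme fails to deliver without the inductive tail control.
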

\begin{proof}
	Note that $B(y,r)\subseteq D$. Let
	\begin{align*}
		&B_k = B(y,(2^{k_0})^{-k}r),\quad k=0,1,\ldots,\\
		&\Pi_k = B_k\setminus B_{k+1},\quad k=0,1,\ldots,\quad \Pi_{-1} = D\setminus B_0,\\
		&u(y) = G_D(y,z),\quad v(y) = G_D(x_0,y).
	\end{align*}
	We will show that there exist $c=c(d,\alpha,\unD)$ and $\zeta = \zeta(d,\alpha,\unD)\in (0,1]$, such that for $k=0,1,\ldots$, 
	\begin{align}\label{eq:oscillation}
		\sup\limits_{B_k} \frac{u}{v} \leq (1 + c\zeta^k)\inf\limits_{B_k} \frac{u}{v}.
	\end{align}
	By virtue of \eqref{eq:gdquotbound}, this implies the statement of the theorem. 
	
	For $-1\leq l < k$ we define
	\begin{align*}
		&u_{k}^l(x) = \mE^x[u(X_{\tau_{B_k}})\, ;\, X_{\tau_{B_k}}\in \Pi_l],\quad v_{k}^l(x) = \mE^x[v(X_{\tau_{B_k}})\, ;\, X_{\tau_{B_k}}\in \Pi_l],\quad x\in \mR^d.	\end{align*}
	In order to obtain \eqref{eq:oscillation}, we will prove the following two claims. Then it suffices to repeat the final part of the proof in \cite[Lemma~16]{MR1438304}---we will  skip those details.\\
	\textbf{Claim 1.}  There exist $c = c(d,\alpha,\unD)$ and $q = q(d,\alpha,\unD)\in (0,1)$ such that for $-1\leq l \leq k-2$ and $x\in B_k$,
	\begin{align*}
		u_k^l(x) \leq C(q^{k_0})^{k-l-1}u(x),\\
		v_k^l(x) \leq C (q^{k_0})^{k-l-1}v(x).
	\end{align*}
	We define the oscillation of function $f$ as $\Osc_{A} f = \sup\nolimits_A f - \inf\nolimits_A f.$\\
	The constant $k_0$ is specified so that $q^{k_0}$ is a sufficiently small number from the interval $[1/2,1)$---we refer to \cite{MR1438304} for details. \\
	
	\noindent\textbf{Claim 2.} Let $g(x) = u_{k+1}^k(x)/v_{k+1}^k(x)$. Then there is $\delta = \delta(d,\alpha,\unD)$ such that $\Osc_{B_{k+2}} g \leq \delta \Osc_{B_k} g$.\\
	
	\noindent We will now prove Claim 1 for $u$, the proof for $v$ is identical. First let $0\leq l \leq k-2$. By Lemma~\ref{lem:gdauxest},
	\begin{align*}
		u_k^l(x) = \int_{\Pi_l} G_D(z,v) P_{B_k}(x,v)\, dv \leq cG_D(z,y) \mP^x(X_{\tau_{B_k}}\in \Pi_l).
	\end{align*}
	Furthermore, since $k\geq 1$, Lemma~\ref{lem:gdauxest} yields $G_D(z,y)\leq cG_D(z,x)$. Therefore,
	\begin{align}\label{eq:claim11}
		u_k^l(x) \leq cG_D(z,x) \mP^x(X_{\tau_{B_k}}\in \Pi_l).
	\end{align}
	Recall the explicit formula for the Poisson kernel of the ball---see, e.g., Landkof \cite{MR350027}:
	\begin{align}\label{eq:pkformula}
		P_{B(0,r)}(x,v) = c_{d,\alpha} \frac{(r^2 - |x|^2)^{\alpha/2}}{(|v|^2 - r^2)^{\alpha/2}}|x-v|^{-d},\quad x\in B(0,r),\ v\in B(0,r)^c.
	\end{align}
	Using the formula, we find that
	\begin{align*}
		\mP^x(X_{\tau_{B_k}}\in \Pi_l) = \int_{\Pi_l} P_{B_k}(x,v)\, dv &\leq c_{d,\alpha} (r(2^{k_0})^{-k})^\alpha  \int_{\Pi_l} (|v-y|^2 - (r(2^{k_0})^{-k})^2)^{-\alpha/2}|x-v|^{-d}\, dv\\
		&\leq \tilde{c}_{d,\alpha} \frac{(r(2^{k_0})^{-k})^\alpha}{(r(2^{k_0})^{-l-1})^{\alpha}} = \tilde{c}_{d,\alpha}(2^{-k_0\alpha})^{k-l-1}.
	\end{align*}
	Coming back to \eqref{eq:claim11} we get Claim 1 for $0\leq l \leq k-2$.
	
	Now, let $l=-1$. Using \eqref{eq:pkformula}, we get
	\begin{align*}
		u_k^{-1}(x) &\leq C(d,\alpha,\unD)\int_{D\setminus B(y,r)} G_D(z,v) \frac{((r(2^{k_0})^{-k})^2 - |x-y|^2)^{\alpha/2}}{(|v-y|^2 - (r(2^{k_0})^{-k})^2)^{\alpha/2}} |x-v|^{-d}\, dv\\
		&\leq C(d,\alpha,\unD)((2^{k_0})^{-k})^{\alpha}\int_{D\setminus B(y,r)} G_D(z,v) \frac{r^\alpha}{(|v-y|^2 - (r(2^{k_0})^{-k})^2)^{\alpha/2}}|x-v|^d\, dv\\
		&\leq c(d,\alpha)C(d,\alpha,\unD)(2^{-\alpha k_0})^{k}\int_{D\setminus B(y,r)} G_D(z,v) \frac{(r^2 - |x-y|^2)^{\alpha/2}}{(|v-y|^2 - r^2)^{\alpha/2}}|x-v|^d\, dv\\
		&\leq \tilde{c}(d,\alpha)C(d,\alpha,\unD)(2^{-\alpha k_0})^{k}\int_{D\setminus B(y,r)} G_D(z,v) P_{B(y,r)}(x,v)\, dv.
	\end{align*}
	Since $G_D(z,\cdot)$ is harmonic in $D\setminus \{z\}$, the last integral is equal to $G_D(z,x)$, which yields Claim 1 for $l=-1$. Thus, Claim 1 is proved.
	
	It remains to prove Claim 2, which we do now. Let $a_1 = \inf\nolimits_{B_k} g$ and $a_2 = \sup\nolimits_{B_k} g$. Without any loss of generality we may assume $a_1\ne a_2$. Then, we let
	\begin{align*}g'(x) = \frac{g(x) - a_1}{a_2 - a_1},\quad x\in B_k.\end{align*}
	We have $0\leq g'\leq 1$, $\Osc_{B_k} g' = 1$, and $\Osc_{B_{k+2}} g = \Osc_{B_{k+2}} g' \Osc_{B_k} g$.
	If $\sup\nolimits_{B_{k+2}} g' \leq \tfrac 12$, then we are done, so assume otherwise. Note that
	\begin{align*}
		g'(x) = \frac{\frac{u_{k+1}^k(x) - a_1v_{k+1}^k(x)}{a_2-a_1}}{v_{k+1}^k(x)} =: \frac{g_1(x)}{g_2(x)},\quad x\in B_{k+2}.
	\end{align*}
	By \eqref{eq:pkformula}, we have
	\begin{align}\label{eq:supinf1}
		1\leq \frac{\sup_{B_{k+2}} g_2}{\inf_{B_{k+2}} g_2}= \frac{\sup_{B_{k+2}} v_{k+1}^k}{\inf_{B_{k+2}} v_{k+1}^k} \leq C(d,\alpha).
	\end{align}
	Furthermore, since $v_{k+1}^k(x) \leq \sup\nolimits_{B_0}v\leq C(d,\alpha,\unD)$ for all $x\in \mR^d$, we get
	\begin{align*}
		g_1(x) = v_{k+1}^k(x) g'(x) \leq C(d,\alpha,\unD),\quad x\in B_k.
	\end{align*} 
	If we extend $g_1$ to be equal to 0 on $\mR^d\setminus B_k$, then $g_1$ is regular harmonic on $B_{k+1}$, nonnegative and bounded. 
 Therefore, by the Harnack inequality in an explicit scale invariant formulation \cite[Lemma~1]{MR1703823}; see also
 Bass and Levin \cite[Theorem~3.6]{MR1918242} or Grzywny \cite{MR3225805},
	\begin{align}\label{eq:supinf2}
		1\leq \frac{\sup_{B_{k+2}} g_1}{\inf_{B_{k+2}} g_1} \leq C(d,\alpha,\unD).
	\end{align}
	By \eqref{eq:supinf1} and \eqref{eq:supinf2}, we get
	\begin{align*}
		\inf\limits_{B_{k+2}} g' \geq C^{-2} \sup\limits_{B_{k+2}} g' \geq \tfrac 12 C^{-2}.
	\end{align*}
	Therefore,
	\begin{align*}
		\Osc_{B_{k+2}} g' \leq \max(\tfrac 12, 1 - \tfrac 12 C^{-2}) = 1 - \tfrac12 C^{-2},
	\end{align*}
	which ends the proof of Claim 2, and thus the lemma is proved.
\end{proof}
\begin{proof}[Proof of {\rm Theorem~\ref{lem:lpmodulus}}]
	By Lemma~\ref{lem:gdquot}, we can assume without loss of generality that $|y-y'|\leq 1/16$. 
	
	We first consider the case $2^{k_0}|y' - y|^{1/2} \geq \delta_D(y)$, with $k_0$ from Lemma~\ref{lem:gdint}), and let $Q\in \partial D$ be such that $|y-Q| = \delta_D(y)$. Note that $y,y'\in B(Q,2^{k_0+1}|y-y'|^{1/2})$---since $|y-y'|< 1$, we have $|y-y'| < |y-y'|^{1/2}$. We split the integral as follows:
	\begin{align}\label{eq:splitgd}
		\int_D \bigg|\frac{G_D(y,z)}{G_D(x_0,y)} - \frac{G_D(y',z)}{G_D(x_0,y')}\bigg|^p\, dz  = \int_{D\cap B(Q,2^{k_0+2}|y-y'|^{1/2})} + \int_{D\setminus B(Q,2^{k_0+2}|y-y'|^{1/2})}.
	\end{align}	
	By \eqref{eq:gdquotbound},there exist $c = c(d,\alpha,\unD)$ and $\gamma = \gamma(d,\alpha,\unD)\in (0,\alpha)$ such that
	\begin{align*}
		&\int_{D\cap B(Q,2^{k_0+2}|y-y'|^{1/2})}\bigg|\frac{G_D(y,z)}{G_D(x_0,y)} - \frac{G_D(y',z)}{G_D(x_0,y')}\bigg|^p\, dz\\ &\leq 2^p\int_{D\cap B(Q,2^{k_0+2}|y-y'|^{1/2})}\bigg(\bigg|\frac{G_D(y,z)}{G_D(x_0,y)}\bigg|^p + \bigg|\frac{G_D(y',z)}{G_D(x_0,y')}\bigg|^p\bigg)\, dz\\
		&\leq c \int_{B(0,2^{k_0+2}|y-y'|^{1/2})} |z|^{p(\alpha-\gamma-d)}\, dz\\
		&= c C(d,\alpha,p) |y-y'|^{(d+p(\alpha-\gamma-d))/2}.
	\end{align*}
	In the second integral of \eqref{eq:splitgd} we use the boundary Harnack principle given in \cite[Lemma~3]{MR1704245}: we let $u(y) = G_D(y,z)$, $v(y) = G_D(x_0,y)$ and $r = 2^{k_0+1}|y-y'|^{1/2}$ there. By the Green function estimates \eqref{eq:GDest} and arguments similar to the proof of Lemma~\ref{lem:gdquot} we find that for $z\in D\cap (B(Q,2^{k+k_0+3}|y-y'|^{1/2})\setminus B(Q,2^{k+k_0+2}|y-y'|^{1/2}))$ we have $u(A_r(Q))/v(A_r(Q))\leq C(d,\alpha,\unD) (2^k|y-y'|^{1/2})^{\alpha-\gamma-d}$, for all $k\in \{0,1,\ldots,N_0\}$, where $N_0 = \lceil \log_2 (\diam (D)/2^{k_0+2}|y-y'|^{1/2})\rceil$ and we define $u/v$ to be 0 outside $D$.
	Therefore, by \cite[Lemma~3]{MR1704245}, there exist $c$ and $\sigma>0$ depending only on $d,\alpha,\unD$ such that
	\begin{align*}
		\bigg|\frac{G_D(y,z)}{G_D(x_0,y)} - \frac{G_D(y',z)}{G_D(x_0,y')}\bigg|^p \leq c (2^k|y-y'|^{1/2})^{p(\alpha-\gamma-d)} |y-y'|^{\sigma p/2}
	\end{align*}
	holds for all $z\in D\cap (B(Q,2^{k+k_0+3}|y-y'|^{1/2})\setminus B(Q,2^{k+k_0+2}|y-y'|^{1/2}))$. It follows that 
	\begin{align*}
		&\int_{D\setminus B(Q,2^{k_0+2}|y-y'|^{1/2})}\bigg|\frac{G_D(y,z)}{G_D(x_0,y)} - \frac{G_D(y',z)}{G_D(x_0,y')}\bigg|^p\, dz\\
		=&\sum\limits_{k=0}^{N_0}\int_{D\cap (B(Q,2^{k+k_0+3}|y-y'|^{1/2})\setminus B(Q,2^{k+k_0+2}|y-y'|^{1/2}))}\bigg|\frac{G_D(y,z)}{G_D(x_0,y)} - \frac{G_D(y',z)}{G_D(x_0,y')}\bigg|^p\, dz\\
		\leq &c |y-y'|^{\sigma p/2} \sum\limits_{k=0}^{N_0} (2^k|y-y'|^{1/2})^{p(\alpha-\gamma-d)} (2^k|y-y'|^{1/2})^d \\
		= &c|y-y'|^{\sigma p/2} \sum\limits_{k=0}^{N_0} (2^k|y-y'|^{1/2})^{d + p(\alpha-\gamma-d)}.
	\end{align*}
	The last sum is comparable to $\diam(D)^{d+p(\alpha-\gamma-d)}$, so the proof is complete when $2^{k_0}|y-y'|^{1/2} \geq \delta_D(y)$.
	
	Now assume that $2^{k_0}|y-y'|^{1/2}<\delta_D(y)$. We split the integral in the following way:
	\begin{equation}\label{eq:gdsplit2}
		\begin{split}
			&\int_D  \bigg|\frac{G_D(y,z)}{G_D(x_0,y)} - \frac{G_D(y',z)}{G_D(x_0,y')}\bigg|^p\, dz \\
			= &\int_{D\cap B(y,2^{k_0}|y-y'|^{1/2})} + \int_{D\cap B(y,2^{k_0}|y-y'|^{1/2})^c\cap B(Q,3\delta_D(y))^c} + \int_{D\cap B(y,2^{k_0}|y-y'|^{1/2})^c\cap B(Q,3\delta_D(y))}.
		\end{split}
	\end{equation}
	The first two integrals are handled as the ones in \eqref{eq:splitgd}. In particular, in the second one we can use the boundary Harnack principle. In the last integral on the right-hand side of \eqref{eq:gdsplit2} we will apply Lemma~\ref{lem:gdint}. To this end, we split once more:
	\begin{align*}
		\int_{D\cap B(y,2^{k_0}|y-y'|^{1/2})^c\cap B(Q,3\delta_D(y))}   \leq \sum\limits_{k=0}^{M_0}  \int_{D\cap B(Q,3\delta_D(y))\cap (B(y,2^{k+k_0+1}|y-y'|^{1/2})\setminus B(y,2^{k+k_0}|y-y'|^{1/2}))} =: \sum\limits_{k=0}^{M_0} I_k,
	\end{align*}
	where $M_0 = \lceil \log_2 (3\delta_D(y)/(2^{k_0}|y-y'|^{1/2}))\rceil$. We then use Lemma~\ref{lem:gdint} with $r = r_k = 2^{k_0+k}|y-y'|^{1/2}/4$:
	\begin{align*}
		I_k &\leq C(d,\alpha,\unD) |y-y'|^{\sigma p/2} 
 \!\!\!\!\!\!\!\!\!\!\int\limits_{ B(y,2^{k+k_0+1}|y-y'|^{1/2})\setminus B(y,2^{k+k_0}|y-y'|^{1/2})} \!\!\!\!\!\!\!\!\!\!\!\!\!\!\!\!\!\!\!\!\!\!\!\!\!r_k^{p(\alpha-d-\gamma)}\, dz\\
		&\leq \widetilde{C}(d,\alpha,\unD)  |y-y'|^{\sigma p/2} (2^{k+k_0}|y-y'|^{1/2})^{d-p(\alpha-\gamma-d)},\quad k=0,\ldots,M_0,
	\end{align*}
 since for $|y-y'|\leq 1/16$, we have $|y-y'|\leq |y-y'|^{1/2}/4$, so $y'\in B(y,2^{-k_0}r)$.
	Therefore we get
	\begin{align*}
		\sum\limits_{k=0}^{M_0} I_k \leq C(d,\alpha,\unD) |y-y'|^{\sigma p/2} \delta_D(y)^{d-p(\alpha-\gamma-d)},
	\end{align*}
	which ends the proof.
\end{proof}
\begin{proof}[Proof of {\rm Theorem~\ref{th:modulus}}]
	Fix $x\in D$ and $t\in [T_1,T_2]$. First, we investigate $\eta^{x_0}$. By the results of Section~\ref{sec:spect},
	\begin{align*}
		p_t^D(x,y) = G_D\Delta^{\alpha/2}_yp_t^D(x,y).
	\end{align*}
	Furthermore, by Corollary~\ref{cor:lptd}, the function $f(y) = \Delta^{\alpha/2}_yp_t^D(x,y)$ is bounded and the bound does not depend on $x\in D$. Therefore, by Theorem~\ref{lem:lpmodulus}, for $y,y'\in D\setminus B(x_0,r_1)$,
	\begin{align*}
		\bigg|\frac{p_t^D(x,y)}{G_D(x_0,y)} - \frac{p_t^D(x,y')}{G_D(x_0,y')}\bigg| &\leq \int_D \bigg|\frac{G_D(y,z)}{G_D(x_0,y)} - \frac{G_D(y',z)}{G_D(x_0,y')}\bigg| |f(z)|\, dz\\ &\leq \bigg\|\frac{G_D(y,\cdot)}{G_D(x_0,y)} - \frac{G_D(y',\cdot)}{G_D(x_0,y')}\bigg\|_{L^1(D)} \|f\|_{\infty} \leq C|y-y'|^{\sigma},
	\end{align*}
	where the constants $C,\sigma$ depend only on $d,\alpha,\unD,T_1,T_2,x_0,$ and $r_1$. 
	
	We now proceed to $\wtn$. Note that there exist $x_1\in D$ and $r = r(\unD)$ such that $B(x_1,2r)\subset D$. Without loss of generality, we can assume that $|y-y'| < r/4$. Then, for any fixed $y,y'$, there exists $x_2$ such that $B(x_2,r/4)\subset D$ and $y,y'\notin B(x_2,r/4)$. This means that $G_D(x_2,y),G_D(x_2,y')\leq C$, where $C\geq 1$ depends only on $d,\alpha,$ and $\unD$. We then split as follows:
	\begin{align}
			\bigg|\frac{p_t^D(x,y)}{p_{t_0}^D(x_0,y)} - \frac{p_t^D(x,y')}{p_{t_0}^D(x_0,y')}\bigg| &= 	\bigg|\frac{p_t^D(x,y)}{G_D(x_2,y)} \frac{G_D(x_2,y)}{p_{t_0}^D(x_0,y)}- \frac{p_t^D(x,y')}{G_D(x_2,y')}\frac{G_D(x_2,y')}{p_{t_0}^D(x_0,y')}\bigg|\nonumber \\
			&\leq \frac{p_t^D(x,y)}{G_D(x_2,y)} \bigg|\frac{G_D(x_2,y)}{p_{t_0}^D(x_0,y)}-\frac{G_D(x_2,y')}{p_{t_0}^D(x_0,y')}\bigg| +  \frac{G_D(x_2,y')}{p_{t_0}^D(x_0,y')}\bigg|\frac{p_t^D(x,y')}{G_D(x_2,y')} - \frac{p_t^D(x,y)}{G_D(x_2,y)}\bigg|.\label{eq:split}
	\end{align}
	By using Lemma~\ref{L38} and \eqref{factorization}, we find that
	\begin{align}\label{eq:upbound}
		\frac{p_t^D(x,y)}{G_D(x_2,y)} \lesssim \frac{\mP^y(\tau_D > t)}{G_D(x_2,y)} = \frac{G_DP_t^D\kappa_D(y)}{G_D(x_2,y)} \leq C(d,\alpha,\unD,T_1,T_2) < \infty.  
	\end{align}
	By similar arguments, 
	\begin{align}\label{eq:lowbound}
			\frac{p_t^D(x_0,y)}{G_D(x_2,y)} \geq c(d,\alpha,\unD,T_1,T_2,x_0)> 0.
	\end{align}
	From \eqref{eq:split}, \eqref{eq:upbound}, \eqref{eq:lowbound}, and the H\"older regularity of $\eta^{x_0}$ obtained above, we arrive at
	\begin{align*}
		\bigg|\frac{p_t^D(x,y)}{p_{t_0}^D(x_0,y)} - \frac{p_t^D(x,y')}{p_{t_0}^D(x_0,y')}\bigg|  \leq C|y-y'|^\sigma,
	\end{align*}
	with $C$ and $\sigma$ depending on $d,\alpha,\unD,T_1,T_2,x_0$.
	
	The arguments for $\eta$ are similar to the ones for $\wtn$, with no dependence on $x_0$. The proof is complete.
\end{proof}
\section{Space-time stable processes and caloric functions}\label{sec:caloric}
\subsection{Preliminaries}
Recall that $(X_s)_{s\geq 0}$ is the isotropic $\alpha$-stable L\'evy process. Like for the space-time Brownian motion \cite{MR1814344}, we define the \textit{space-time $\alpha$-stable process} as the following L\'evy process on $\mR^{d+1}$:
$$\dot{X}_s := (-s,X_s),\quad s\geq 0.$$
Since $\dot{X}$ is a L\'evy process, it has the strong Markov property. 
Many properties of  the space-time process are inherited from the $\alpha$-stable process. 
Thus, for a (Borel) set $A\subseteq \mR^{d+1}$, we let
$$\mP^{(t,x)}(\dot{X}_s \in A) := \mP((t-s,X_s+x)\in A),$$
and for a (Borel) function $f\colon \mR^{d+1}\to \mR^d$, we have
$$\mE^{(t,x)}[f(\dot{X}_s)] = \mE[f(t-s,X_s+x)].$$
It can be easily verified that the transition probability of $\dot{X}$ takes on the following form
$$\widetilde{p}_s(t,x,du,dy) = p_s(x,y)\, dy\otimes \delta_{\{t-s\}}(du),\quad s\geq 0,\ (t,x),(u,y)\in \mR\times \mR^d.$$
 The corresponding semigroup will be denoted by $\widetilde{P}$. 
\begin{lemma}\label{lem:stgen} The pointwise generator of the semigroup of the space-time $\alpha$-stable process coincides with the fractional heat operator $\Delta^{\alpha/2} - \partial_t$ for functions $u\in C^{1,2}_b([0,\infty)\times \mR^d)$.
\end{lemma}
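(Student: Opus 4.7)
My plan is to compute the pointwise generator directly from the definition and decompose the difference quotient into temporal and spatial parts. Specifically, for $(t,x) \in (0,\infty)\times \Rd$ and $0 < s < t$ (so the shifted time coordinate $t-s$ stays in the domain), I will add and subtract $\mE[u(t, X_s+x)]$ to obtain
\begin{align*}
\frac{\widetilde{P}_s u(t,x) - u(t,x)}{s}
&= \frac{\mE\bigl[u(t-s, X_s + x) - u(t, X_s + x)\bigr]}{s} + \frac{\mE[u(t, X_s + x)] - u(t,x)}{s}\\
&=: I_1(s) + I_2(s),
\end{align*}
and then treat each piece separately. Boundedness of $u$ makes all expectations above finite.

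For $I_2(s)$: setting $v(\cdot) := u(t, \cdot)$, which belongs to $C^2_b(\Rd)$, we have $I_2(s) = (P_s v(x) - v(x))/s$, and Sato \cite[Theorem~31.5]{MR1739520}---already invoked in Section~\ref{sec:prelim} for the generator of $X$---gives $I_2(s) \to \Delta^{\alpha/2} v(x) = \Delta^{\alpha/2}_x u(t,x)$. Since Sato's statement is phrased for $C^2_c$ while I work on $C^2_b$, I will insert a standard cutoff step: decompose $v = v\chi + v(1-\chi)$ with $\chi \in C^2_c(\Rd)$ equal to $1$ near $x$, apply the $C^2_c$ result to $v\chi$, and handle $v(1-\chi)$ via the pointwise convergence $p_s(x,\cdot)/s \to \nu(x,\cdot)$ away from $x$, controlled uniformly by the heat kernel bound \eqref{eq:HKest} so that dominated convergence applies.

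For $I_1(s)$: the mean value theorem in the time variable supplies, for each $y$, some $\xi_y \in (t-s,t)$ with $u(t-s,y) - u(t,y) = -s\,\partial_t u(\xi_y, y)$, so
\begin{align*}
I_1(s) = -\mE\bigl[\partial_t u(\xi_{X_s + x}, X_s + x)\bigr].
\end{align*}
Right-continuity of $X$ with $X_0 = 0$ yields $X_s \to 0$ almost surely, hence $(\xi_{X_s+x}, X_s+x) \to (t,x)$ a.s.; continuity and boundedness of $\partial_t u$ then give $I_1(s) \to -\partial_t u(t,x)$ via dominated convergence.

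Summing the two limits produces $(\Delta^{\alpha/2} - \partial_t)u(t,x)$, as required. The argument is essentially routine; the only point requiring some care---and the main mild obstacle---is the passage from $C^2_c$ to $C^2_b$ for the spatial generator in the treatment of $I_2$, which the cutoff argument above handles. A minor bookkeeping point is the domain restriction $t>0$ so that $t-s\ge 0$ for small $s$, which is the natural interior regime for the generator.
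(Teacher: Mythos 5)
Your proof is correct, but it takes a genuinely different route from the paper's. The paper adds and subtracts $u(t-s,x)$ inside the integral, grouping the difference quotient into a spatial increment evaluated at the shifted time $t-s$ (their \eqref{eq:lapl}) plus the pure time increment at the fixed point $x$ (their \eqref{eq:deriv}); the work then goes into showing that the spatial piece at time $t-s$ has the same limit as at time $t$, via a splitting into $B(x,\delta)$ and its complement combined with the Lagrange mean value theorem and a Taylor expansion in $t$. You instead add and subtract $\mE[u(t,X_s+x)]$, so that your $I_2$ is literally $(P_s v - v)/s$ for the fixed time-$t$ slice $v=u(t,\cdot)\in C^2_b(\Rd)$, and your $I_1$ carries the whole time increment at the random point $X_s+x$. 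What each approach buys: your treatment of $I_2$ reduces the spatial part cleanly to the elliptic generator statement, and your cutoff argument $v=v\chi+v(1-\chi)$ with the bound $p_s(x,y)/s\lesssim \nu(x,y)$ from \eqref{eq:HKest} makes explicit the passage from $C^2_c$ (the class in Sato's theorem, as cited in Section~\ref{sec:prelim}) to $C^2_b$, a point the paper's proof leaves implicit; your $I_1$ avoids the paper's Taylor/$o(s)$ bookkeeping at the cost of the probabilistic input $X_s\to 0$ almost surely. Two small points of care: the selection $y\mapsto \xi_y$ from the mean value theorem is not obviously measurable, so you should instead write $u(t-s,y)-u(t,y)=-\int_0^s \partial_t u(t-r,y)\,dr$ and apply Fubini before passing to the limit; and when you conclude that the two limits of the cutoff pieces sum to $\Delta^{\alpha/2}v(x)$, note that the second piece vanishes near $x$, so its fractional Laplacian is an absolutely convergent integral and the additivity with the principal value in \eqref{eq:fracLapl} is immediate. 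Neither issue affects the validity of the argument.
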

\begin{proof} Let $u\in C^{1,2}_b([0,\infty)\times \mR^d)$. For all $(t,x)\in [0,\infty)\times \mR^d$ and $s\in (0,t)$, we have
	\begin{align}
	\frac 1s (\widetilde{P}_s u(t,x) - u(t,x)) &= \frac 1s \int_{\mR^d}\int_{[0,\infty)} (u(r,y) - u(t,x)) \widetilde{p}_{s}(t,x,dy,dr)\nonumber\\
	&= \frac 1s \int_{\mR^d} (u(t-s,y) -u(t,x)) p_{s}(x,y) \, dy\nonumber\\
	&= \frac 1s \int_{\mR^d} (u(t-s,y) - u(t-s,x))p_{s}(x,y) \, dy\label{eq:lapl}\\
	&\,+ \frac 1s (u(t-s,x) - u(t,x)).\label{eq:deriv}
	\end{align}
	Clearly, \eqref{eq:deriv} converges to $-\partial_t u(t,x)$ as $s\to 0^+$, so it suffices to show that \eqref{eq:lapl} converges to $\Delta^{\alpha/2}_x u(t,x)$. To this end, we will prove that
	\begin{align}\label{eq:gendiff}
	\frac 1s \int_{\mR^d} ((u(t,y) - u(t,x)) - (u(t-s,y) - u(t-s,x)))p_{s}(x,y) \, dy
	\end{align}
	converges to 0 as $s\to 0^+$.
	Let $\varepsilon>0$ and let $\delta > 0$ be so small that $p_s(x,B(x,\delta)^c) <\varepsilon$. Then we also have $p_{s'}(x,B(x,\delta)^c) < \varepsilon$ for $s'\in (0,s)$. By Lagrange's mean value theorem, we get
	$$\bigg|\frac 1s \int_{B(0,\delta)^c}((u(t,y) - u(t,x)) - (u(t-s,y) - u(t-s,x)))p_{s}(x,y) \, dy\bigg| < 2\varepsilon\|u\|_{C^{1,2}}.$$ 
	By Taylor's expansion, $u(t-s,x) = u(t,x) - s\partial_t u(t,x) + o(s)$ as $s\to 0$, and similarly for $y$, so
	\begin{align*}
	&\bigg|\frac 1s \int_{B(0,\delta)}((u(t,y) - u(t,x)) - (u(t-s,y) - u(t-s,x)))p_{s}(x,y) \, dy\bigg|\\
	=\,&\bigg|\int_{B(0,\delta)} (\partial_t u(t,x) - \partial_t u(t,y) + \frac{o(s)}s) p_{s}(x,y) \, dy\bigg|\\
	\leq\, &\delta \|u\|_{C^{1,2}} + o(1).
	\end{align*}
	This ends the proof.
\end{proof}
In the next result we exhibit a space-time Poisson kernel for cylindrical domains. As usual, for arbitrary (open) $G \subseteq \mR\times \mR^d$, we let
     $$\tau_{G} := \inf\{t>0: \dot{X}_t \notin G\}.$$ 
\begin{lemma}\label{lem:stexit}
	Recall that $D \subseteq \mR^d$ is Lipschitz open set and let $\dot{D} = (r,t)\times D$ for some (arbitrary) $-\infty\le r<t$. Then the distribution of $\dot{X}_{\tau_{\dot{D}}}$---the first exit place of $\dot{X}$ from $\dot{D}$---is given by the formula
	\begin{align*}
	\mP^{(t,x)} (\dot{X}_{\tau_{\dot{D}}} \in (ds, dy)) = \begin{cases}{\textrm{\bf 1}}_{[r,t)}(s)\, ds\otimes J^D(t,x,s,y)\, dy + \delta_{t-r}(ds)\otimes p_{t-s}^D(x,y) \, dy,\quad &r>-\infty,\\
    \textrm{\bf 1}_{(-\infty,t)}(s)\, ds\otimes J^D(t,x,s,y)\, dy,\quad &r=-\infty,
    \end{cases}
	\end{align*}
	where $$J^D(t,x,s,y) := \int_D p_{t-s}^D(x,\xi) \nu(\xi,y)\, d\xi,\quad s<t,\ x\in D,\ y\in D^c.$$
 \end{lemma}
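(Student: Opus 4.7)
My plan is to decompose the exit event from $\dot D = (r,t) \times D$ into two disjoint types and compute each separately. Under $\mP^{(t,x)}$, the space-time process evolves as $\dot X_s = (t-s,\, X_s+x)$, so exit occurs either when $X_s+x$ first leaves $D$ (\emph{lateral exit}, at elapsed time $\tau < t-r$), or when the time coordinate reaches $r$ (\emph{bottom exit}, at elapsed time $\tau = t-r$, only relevant when $r > -\infty$). Writing $s := t - \tau$ for the time coordinate of $\dot X_\tau$, the lateral case gives $s \in (r,t)$ with spatial location in $D^c$, while the bottom case gives $s = r$ with spatial location in $D$; these two events are disjoint.

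For the lateral piece I would invoke the Ikeda--Watanabe formula \eqref{eq:IW}, which represents the joint law of $(\tau_D, X_{\tau_D-}, X_{\tau_D})$ under $\mP^x$ by the density $p_u^D(x,\xi)\nu(\xi,y)$ on $(0,\infty) \times D \times D^c$ against $du\, d\xi\, dy$. Integrating out the middle variable $\xi$, restricting to elapsed times $u \in (0, t-r)$, and then substituting $s = t - u$ converts the resulting density into $J^D(t,x,s,y)\,ds\,dy$ on $(r,t) \times D^c$, which matches the lateral term in the asserted formula.

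For the bottom piece I would observe that the event $\{\tau_{\dot D} = t - r\}$ coincides with $\{\tau_D > t - r\}$, on which $\dot X_{\tau_{\dot D}} = (r,\, X_{t-r}+x)$. By the very definition of the Dirichlet heat kernel, the $\mP^x$-distribution of $X_{t-r}$ on $\{\tau_D > t-r\}$ has density $p_{t-r}^D(x,y)\,dy$ supported in $D$, producing a purely atomic-in-time piece concentrated at $s = r$ with spatial density $p_{t-s}^D(x,y)|_{s=r}$. Adding this to the lateral piece (which is supported on $D^c$ in the spatial variable, hence disjoint from the bottom piece) yields the stated formula. When $r = -\infty$, the bottom piece vanishes and only the lateral part survives, since $\mP^x(\tau_D = \infty) = 0$ for bounded $D$.

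The main delicate point is purely bookkeeping: distinguishing the elapsed time $\tau$ of the space-time process from the time coordinate $s = t - \tau$ of the exit location $\dot X_{\tau_{\dot D}}$, and keeping their ranges straight under the change of variable. Analytically there is no real obstacle: \eqref{eq:IW} supplies the lateral part and the defining relation $P_t^D f(x) = \int_D f(y) p_t^D(x,y)\,dy$ supplies the bottom part. The Lipschitz hypothesis on $D$ enters only implicitly through facts already recorded in the proof of Lemma~\ref{L38}, namely $\mP^x(X_{\tau_D-} \in D) = 1$ and $\mP^x(X_{\tau_D} \in \partial D) = 0$, which together guarantee that the Ikeda--Watanabe representation captures the full mass of the lateral exit.
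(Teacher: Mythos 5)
Your proof is correct and takes essentially the same route as the paper: decompose the exit into the lateral part, handled by the Ikeda--Watanabe formula \eqref{eq:IW} with the middle variable integrated out, and the bottom part, handled by the defining property of the Dirichlet heat kernel, the remaining event $\tau_{\dot{D}}>\tau_D$ being null. Your bookkeeping of the exit time coordinate (the atom sitting at $s=r$ with spatial density $p^D_{t-r}(x,\cdot)$) is consistent with the paper's subsequent use of the formula in \eqref{eq:repr}.
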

We call $J^D$ the \textit{lateral Poisson kernel}.
 \begin{remark}
     For the cylinder $\dot{D} = (r,t)\times D$, if the process $\dot{X}$ starts at $(t,x)$ with some $x\in D$, then it immediately enters $\dot{D}$, so $\tau_{\dot{D}}>0$ almost surely, although $(t,x)\notin \dot{D}$. In the language of potential theory, the points on the \textit{top} of the cylinder are \textit{irregular}.
 \end{remark}
\begin{proof}[Proof of {\rm Lemma~\ref{lem:stexit}}.]
	Let $r>-\infty$. We have
	\begin{align}
	\mP^{(t,x)} (\dot{X}_{\tau_{\dot{D}}} \in (ds,dy)) =\ &\mP^{(t,x)} (\dot{X}_{\tau_{\dot{D}}} \in (ds,dy) ,  \tau_{\dot{D}} > \tau_D)\label{eq:dot}\\
	+\,&\mP^{(t,x)} (\dot{X}_{\tau_{\dot{D}}} \in (ds,dy) ,  \tau_{\dot{D}} = \tau_D)\label{eq:equal}\\
	+\,&\mP^{(t,x)} (\dot{X}_{\tau_{\dot{D}}} \in (ds,dy) ,  \tau_{\dot{D}} < \tau_D).\label{eq:D}
	\end{align}
	Note that \eqref{eq:dot} vanishes, because $\mP^{(t,x)}(\tau_{\dot{D}} > \tau_D) = 0$.
	
	By the Ikeda--Watanabe formula \eqref{eq:IW}, the term \eqref{eq:equal} is equal to
	\begin{align*}
	\mP^{(t,x)} (X_{\tau_D} \in A,\, \tau_{D} \leq t-r,\, \tau_D\in ds) = {\textrm{\bf 1}}_{[r,t)}(s)\, ds\otimes J^D(t,x,s,y)\, dy.
	\end{align*}
	In \eqref{eq:D} we have $\tau_D > \tau_{\dot{D}} = t-r$, so by the definition of the Dirichlet heat kernel, this term is equal to
	\begin{align*}
	\delta_{t-r}(ds)\otimes p_{t-r}^D(x,y),
	\end{align*}
see \cite[Chapter 2]{MR1329992}. 
 The case of $r=-\infty$ is left to the reader.
\end{proof}
We see that $J^D(t,x,s,y)$ represents the scenario of $\dot{X}$ starting at $(t,x)$ and leaving to $(s,y)$, where, recall, $x\in D$, $y\in D^c$, and $s<t$. 
Another way to express the result in Lemma~\ref{lem:stexit}, is as follows:
\begin{align}\label{eq:repr}
\mE^{(t,x)}u(\dot{X}_{\tau_{\dot{D}}}) = \int_r^t\int_{D^c} J^D(t,x, s,z)u(s,z) \, dz \, ds
+ \int_{D} p_{t-r}^D(x,y)u(r,y) \, dy,
\end{align}
whenever this integral makes sense, e.g., for nonnegative $u$.
By analogy to the elliptic equations, we call the right-hand side of  \eqref{eq:repr} the \textit{Poisson integral}, and the first term on the right-hand side of \eqref{eq:repr}---the \textit{lateral Poisson integral}.
\begin{remark}
	Another motivation for calling $J^D(t,x,s,z)$ the lateral Poisson kernel comes from the fact that it is the \textit{nonlocal normal derivative} of $p_{t-s}^D$, whereas $p_{t-s}^D$ serves as the Green function for the fractional heat equation. Indeed, using the definition of the nonlocal normal derivative from \cite{MR3651008}:
	\begin{equation*}
	[\partial_{\vec{n}} u](x) := \int_{D} (u(y) - u(x))\nu(x,y)\, dy,\quad x\in D^c,
	\end{equation*}
	we see that for every $z\in D^c$, 
	\begin{equation*}
	\partial_{\vec{n}} p_{t-s}^D(x,\cdot)(z) = \int_D p_{t-s}^D(x,y) \nu(y,z) \, dy = J^D(t,x,s,z), \quad x\in D.
	\end{equation*}
\end{remark}
\subsection{Caloric functions}
 We define the caloric functions in terms of the mean value property. We stress that we only consider finite nonnegative functions.
\begin{definition}
	Let $-\infty<T_1<T_2<\infty$. We say that $u\colon (T_1,T_2)\times \mR^d\to [0,\infty)$ is \textit{caloric} in $(T_1,T_2)\times D$, if the \textit{mean value property}:
	\begin{equation}\label{eq:para}
	u(t,x) = \mE^{(t,x)} u(\dot{X}_{\tau_{G}}),\qquad (t,x)\in (T_1,T_2)\times D,
	\end{equation}
	holds for every open set $G\subset\subset (T_1,T_2)\times D$.
	
 We say that $u\colon [T_1,T_2)\times \mR^d\to [0,\infty)$ is \textit{caloric in $[T_1,T_2)\times D$} if \eqref{eq:para} holds for every open $G\subset\subset [T_1,T_2)\times D$. 
 If $u$ is caloric in $[T_1,T_2)\times D$ and satisfies \eqref{eq:para} for $G=(T_1,T_2)\times D$, then we say that $u$ is \textit{regular caloric}.
	If $u$ is caloric in $[T_1,T_2)\times D$ and $u\equiv 0$ on the \textit{parabolic boundary} $$D^p := (\{T_1\}\times D)\cup ((T_1,T_2)\times D^c),$$ then we say that $u$ is \textit{singular caloric}.
\end{definition}
\begin{remark}\

\begin{enumerate}[label=(\alph*)]
\item Our caloric functions are just harmonic functions  of the space-time isotropic stable L\'evy process.

\item We may also consider $T_1=-\infty$ or $T_2 = \infty$, where appropriate, in particular when defining functions caloric on $(T_1,T_2)\times D$.

\item 
The condition $G\subset\subset [T_1,T_2)\times D$ allows $G$
to \textit{touch}  $\{T_1\}\times D$.
 Caloricity in $[T_1,T_2)\times D$ may be considered as a (new) relaxation of regular caloricity,  \textit{localized} near the part $\{T_1\}\times D$ of the boundary of $(T_1,T_2)\times D$, see also Lemma~\ref{lem:regular}. Both notions are meant to facilitate discussion of boundary conditions (they generalize to harmonic functions of other strong Markov processes).
\item The caloricity in $[T_1,T_2)\times D$ helps to handle initial conditions which are functions, but also rules out some interesting cases, e.g., $(t,y)\mapsto p_t^D(x,y)$. See also \cite{MR2365478}.
Remarkably, every (nonnegative) function caloric in $(T_1,T_2)\times D$ has a certain initial condition which is a measure, see Section~\ref{sec:repr}. 


\item A caloric function need not satisfy the fractional heat equation pointwise, due to lack of time regularity. This can be seen using the counterexample given  by Chang-Lara and D\'avila \cite[Section~2.4.1]{MR3115838} for viscosity solutions. See also Remark~\ref{r.nL} below.
\end{enumerate}
\end{remark}
\begin{lemma}\label{lem:regular}
Regular caloricity
implies caloricity in $[T_1,T_2)\times D$, which in turn implies caloricity in $(T_1,T_2)\times D$.
 Furthermore, 
\eqref{eq:para} only needs to be verified for 
cylinders $G$.
 \end{lemma}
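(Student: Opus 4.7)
The first assertion splits into two easy strong Markov arguments. Caloricity in $[T_1,T_2)\times D$ trivially implies caloricity in $(T_1,T_2)\times D$, since every open $G\subset\subset (T_1,T_2)\times D$ is also admissible in the $[T_1,T_2)\times D$ definition. For regular caloricity $\Rightarrow$ caloricity in $[T_1,T_2)\times D$, fix an open $G\subset\subset [T_1,T_2)\times D$ and $(t,x)\in G$. The case $t=T_1$ is trivial since $\tau_G=0$ $\mP^{(t,x)}$-almost surely. For $t>T_1$, we have $\tau_G\le \tau_{(T_1,T_2)\times D}$, and the strong Markov property yields
\[
u(t,x)=\mE^{(t,x)}u(\dot{X}_{\tau_{(T_1,T_2)\times D}})=\mE^{(t,x)}\bigl[\mE^{\dot{X}_{\tau_G}}u(\dot{X}_{\tau_{(T_1,T_2)\times D}})\bigr]=\mE^{(t,x)}u(\dot{X}_{\tau_G}),
\]
where the last equality uses regular caloricity at $\dot{X}_{\tau_G}$ when it lies in $(T_1,T_2)\times D$ and the identity $\tau_{(T_1,T_2)\times D}=0$ on the parabolic boundary.

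For the second assertion, assume \eqref{eq:para} holds for every cylinder contained in $V$, where $V$ denotes either $(T_1,T_2)\times D$ or $[T_1,T_2)\times D$. I would exhaust an arbitrary open $G\subset\subset V$ by an increasing sequence $G_n\uparrow G$, each $G_n$ a finite union of open cylinders with $\overline{G_n}\subset G$ (obtained by countable cylinder coverings and partial unions). The mean value property extends from a single cylinder to $G_n$ via a Dynkin iteration: for cylinders $B_1,\dots,B_N$ making up $G_n$, define $S_0=0$ and, given $S_k$, set $S_{k+1}=S_k+\tau_{B_{i(k)}}\circ\theta_{S_k}$, where $i(k)$ is the smallest index with $\dot{X}_{S_k}\in B_{i(k)}$ (and $S_{k+1}=S_k$ if none exists). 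The strong Markov property and the cylinder mean value property give $u(t,x)=\mE^{(t,x)}u(\dot{X}_{S_k})$ for every $k$, and $S_k\uparrow \tau_{G_n}$ almost surely, so we obtain \eqref{eq:para} for each $G_n$.

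Passing $n\to\infty$ in $u(t,x)=\mE^{(t,x)}u(\dot{X}_{\tau_{G_n}})$ uses $\tau_{G_n}\uparrow \tau_G$ a.s.\ (a consequence of the compactness of c\`adl\`ag path segments lying in $G$ together with the open-cover property of $\{G_n\}$) and quasi-left-continuity of the L\'evy process $\dot{X}$, which yields $\dot{X}_{\tau_{G_n}}\to \dot{X}_{\tau_G}$ a.s. Uniform integrability follows by enclosing $G$ inside a single cylinder $C\subset\subset V$: the cylinder identity at $C$ combined with the strong Markov property at $\tau_{G_n}$ gives $\mE^{(t,x)}u(\dot{X}_{\tau_{G_n}})\le \mE^{(t,x)}u(\dot{X}_{\tau_C})=u(t,x)<\infty$. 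The main obstacle is the passage $k\to\infty$ in the Dynkin iteration---verifying that $S_k\uparrow \tau_{G_n}$ without the auxiliary sequence stalling on internal boundaries between the component cylinders---which relies on quasi-left-continuity together with the fact that at each step the process exits a cylinder of positive size, precluding pathological accumulation inside $G_n$.
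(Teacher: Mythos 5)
Your treatment of the first assertion is correct and is essentially the paper's argument: the mean value property propagates from a set $G$ to any open subset $G'\subset G$ via the strong Markov property, $\mE^{(t,x)}u(\dot{X}_{\tau_{G}})=\mE^{(t,x)}\mE^{\dot{X}_{\tau_{G'}}}u(\dot{X}_{\tau_{G}})=\mE^{(t,x)}u(\dot{X}_{\tau_{G'}})$, and every admissible $G$ for the weaker notion is admissible for the stronger one.

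For the second assertion you have overlooked that this same downward propagation finishes the proof in one line, and the route you take instead has genuine gaps. The paper's argument is: any open $G\subset\subset [T_1,T_2)\times D$ (resp.\ $(T_1,T_2)\times D$) is contained in a single open cylinder $C$ relatively compact in the same set; \eqref{eq:para} for $C$ then implies \eqref{eq:para} for $G$ by the strong Markov identity above. You instead exhaust $G$ from inside by finite unions of cylinders and pass to the limit, and three steps there are not justified. First, the Dynkin iteration: you yourself flag that $S_k\uparrow\tau_{G_n}$ is "the main obstacle" and do not resolve it. Second, the passage $\mE^{(t,x)}u(\dot{X}_{\tau_{G_n}})\to\mE^{(t,x)}u(\dot{X}_{\tau_{G}})$: quasi-left-continuity gives at best $\dot{X}_{\tau_{G_n}}\to\dot{X}_{\tau_{G}}$, but at this stage $u$ is only a nonnegative Borel function --- continuity of caloric functions is Proposition~\ref{prop:cont}, proved later --- so $u(\dot{X}_{\tau_{G_n}})\to u(\dot{X}_{\tau_{G}})$ does not follow; invoking continuity here would also put the logical order of the paper at risk. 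Third, $\tau_{G_n}\uparrow\tau_{G}$ for c\`adl\`ag paths is itself delicate (left limits of the path may sit on $\partial G$ even before $\tau_G$, so the compactness argument you sketch does not directly apply). Tellingly, your uniform-integrability step already enlarges $G$ to a single cylinder $C\subset\subset V$ and applies the strong Markov property at an intermediate stopping time; applied with $\tau_G$ in place of $\tau_{G_n}$, and with the inequality replaced by the equality that the mean value property actually gives, that is the entire proof, and the exhaustion is unnecessary.
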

\begin{proof}
	Assume that \eqref{eq:para} holds for $G$. By the strong Markov property of $\dot{X}$, \eqref{eq:para} then holds for every open $G' \subset G$:
	$$u(t,x) = \mE^{(t,x)} u(\dot{X}_{\tau_{G}}) = \mE^{(t,x)} \mE^{\dot{X}_{\tau_{G'}}} u(\dot{X}_{\tau_G}) = \mE^{(t,x)} u(\dot{X}_{\tau_{G'}}).$$
	This first two assertions follow immediately. To clarify the third one, note that every open $G'\subset\subset [T_1,T_2)\times D$ is contained in an open cylinder, relatively compact in $[T_1,T_2)\times D$. Similarly for $(T_1,T_2)\times D$. 
\end{proof}
We continue with several examples of caloric functions.
\begin{example}\label{ex:hkpara}
	For every fixed $x\in \mR^d$, the function $(t,y)\mapsto p_t^D(x,y)$ satisfies the mean value property on every $(\eps,T)\times D$ for $0<\eps<T<\infty$, hence it is caloric on $(0,\infty)\times D$.
\begin{example}
    If we let 
    \begin{align}\label{eq:etaneg}\eta_{t,Q}(x) := 0,\quad (t,x)\in (-\infty,0]\times \mR^d \cup (0,\infty)\times D^c,\ Q\in \partial D,
    \end{align}
    then for every fixed $Q\in \partial D$, the function $(t,x)\mapsto \eta_{t,Q}(x)$ is caloric in $(-\infty,\infty)\times D$. Indeed, the mean value property in $(\eps,T)\times D$, with $0<\eps<T<\infty$ is a consequence of \eqref{Entrance}. Then, by Lemma~\ref{lem:nonsingular}, 
    \begin{align*}
        \eta_{t,Q}(x) &= \int_0^t \int_{U^c} J^D(t,x,s,z) \eta_{s,Q}(z)\, dz\, ds\\
        &= \int_{-R}^t \int_{U^c} J^D(t,x,s,z) \eta_{s,Q}(z)\, dz\, ds,
    \end{align*}
    for any $R\geq 0$.
\end{example}
\end{example}

\begin{example}
	If $f\colon \mR^d \to [0,\infty)$ is a nonnegative measurable function and $P_{1}^D f(x)$ is finite for all $x\in D$, then $(t,x)\mapsto P_t^D f(x)$ is  caloric in $[0,\infty)\times D$, with the usual convention $P_0^Df := f$.
\end{example}
The following class of functions is of particular interest for us. We will show in the next section that it coincides with the class of all singular caloric functions.
\begin{lemma}\label{lem:Yaglompot}
	If $\mu(dQ\, ds)$ is a locally finite nonnegative Borel measure on $\partial D\times [0,\infty)$, then  
	\begin{align}\label{eq:singularcal}
		h(t,x) := \begin{cases}\int_{[0,t)} \int_{\partial D} \eta_{t-\tau,Q}(x)\mu(dQ\, d\tau),\quad &t>0,\ x\in D,\\
		0,\quad & \mbox{ elsewhere},\end{cases}
	\end{align} 
	is singular caloric in $[0,\infty)\times D$.
\end{lemma}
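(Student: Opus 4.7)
The plan is to verify the two defining properties of a singular caloric function separately: the vanishing on the parabolic boundary, and the mean value property inside $[0,\infty)\times D$.

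The boundary behavior is built into the definition of $h$: at $t=0$ the interval $[0,0)$ of integration is empty, so $h(0,x)=0$, and for $x\in D^c$ with $t>0$ the integrand $\eta_{t-\tau,Q}(x)$ vanishes by the convention \eqref{eq:etaneg}. Before turning to the mean value property, I would note that $h(t,x)$ is actually finite for $(t,x)\in(0,\infty)\times D$: by Corollary~\ref{cor:MYcont} the map $(s,Q)\mapsto\eta_{s,Q}(x)$ is continuous on $(0,\infty)\times\partial D$, and by Lemma~\ref{lem:nonsingular} it tends to $0$ as $s\to 0^+$ (since $x\in D$ cannot equal $Q\in\partial D$), so it is bounded on $(0,t]\times\partial D$; combined with local finiteness of $\mu$ on the bounded set $\partial D\times[0,t)$, finiteness follows.

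For the mean value property, by Lemma~\ref{lem:regular} it suffices to verify it on cylinders $G=(a,b)\times U$ with $\overline{G}\subset[0,\infty)\times D$, for $(t,x)\in G$. The key observation is that for each fixed $(Q,\tau)\in\partial D\times[0,\infty)$, the time-shifted function $(s,y)\mapsto\eta_{s-\tau,Q}(y)$ (extended by $0$ via \eqref{eq:etaneg}) is caloric on $(-\infty,\infty)\times D$. This follows from the example preceding the lemma, which gives caloricity of $(s,y)\mapsto\eta_{s,Q}(y)$ on $(-\infty,\infty)\times D$, together with the fact that the space-time process $\dot X$ is translation-invariant in the time coordinate: if $G'\subset\subset(-\infty,\infty)\times D$, then $G':=G'-(\tau,0)$ is too, and the identity $\mathbb{E}^{(s,y)}\eta_{T'-\tau,Q}(Y')=\eta_{s-\tau,Q}(y)$ reduces to the untranslated mean value property at $(s-\tau,y)$. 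Since $\overline{U}\subset D$ we have $\overline{G}\subset\subset(-\infty,\infty)\times D$, and so, writing $(T,Y):=\dot X_{\tau_G}$,
\begin{equation*}
\eta_{t-\tau,Q}(x)=\mathbb{E}^{(t,x)}\bigl[\eta_{T-\tau,Q}(Y)\bigr].
\end{equation*}

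Applying Tonelli's theorem, justified by the nonnegativity of everything in sight, now gives
\begin{equation*}
\mathbb{E}^{(t,x)}h(\dot X_{\tau_G})
=\int_{\partial D\times[0,\infty)}\mathbb{E}^{(t,x)}\bigl[\eta_{T-\tau,Q}(Y)\bigr]\,\mu(dQ\,d\tau)
=\int_{\partial D\times[0,\infty)}\eta_{t-\tau,Q}(x)\,\mu(dQ\,d\tau)=h(t,x),
\end{equation*}
where in the first and last integrals the effective range in $\tau$ is truncated to $[0,T)$ and $[0,t)$ by the extension-by-zero convention. This is really the whole argument: the main (very modest) obstacle lies in carefully formulating the time-translation step that upgrades caloricity of $\eta_{\cdot,Q}$ to caloricity of each shifted slice $\eta_{\cdot-\tau,Q}$, after which the statement reduces to a routine Tonelli computation.
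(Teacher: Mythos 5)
Your proof is correct and follows essentially the same route as the paper's: extend the $\tau$-integral to $[0,\infty)$ via the zero-extension \eqref{eq:etaneg}, use caloricity of each time-shifted $\eta_{\cdot-\tau,Q}$ (the paper leaves the translation step implicit), and conclude by Fubini--Tonelli, with finiteness and the parabolic-boundary vanishing handled exactly as in the paper. One tiny quibble: for finiteness you cite Corollary~\ref{cor:MYcont} for continuity in $(s,Q)$, which that corollary does not literally provide (it fixes $Q$), but the uniform bound from Lemma~\ref{lem:nonsingular} alone suffices, as the paper's one-line argument indicates.
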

\begin{proof}
    By Lemma~\ref{lem:nonsingular}, $h$ is finite for all $t>0$ and $x\in D$, and by \eqref{eq:etaneg}, we have
    \begin{align*}
    \int_{[0,t)} \int_{\partial D} \eta_{t-\tau,Q}(x)\mu(dQ\, d\tau) = \int_{[0,\infty)} \int_{\partial D} \eta_{t-\tau,Q}(x)\mu(dQ\, d\tau),\quad t\geq 0,\ x\in D.
    \end{align*}
    Therefore, the mean value property for $h$ follows from Fubini--Tonelli and caloricity of $\eta$.
\end{proof}
\begin{remark}\label{r.nL}
    We note that the viscosity solution considered in \cite[Section~2.4.1]{MR3115838}, although non-differentiable, is Lipschitz in time. The function $n_{t,Q}$ is not even Lipschitz in $t$ because for $t\in (0,1)$ and fixed $x\in D$,
    \begin{align*}
        \frac{\eta_{t,Q}(x)}{t} &= \frac 1t \lim\limits_{y\to Q}\frac{p_t^D(x,y)}{\mP^y(\tau_D>t)}\frac{\mP^y(\tau_D>t)}{\mP^y(\tau_D>1)}
        \gtrsim \frac{p_t(x,Q)}{t} \lim_{y\to Q} \frac{\mP^y(\tau_D>t)}
    {\mP^y(\tau_D>1)} \gtrsim |x-Q|^{-d-\alpha}\lim_{y\to Q} \frac{\mP^y(\tau_D>t)}
    {\mP^y(\tau_D>1)}.
    \end{align*}
    We see, indeed, that the last limit is comparable to $t^{-1/2}$ if $D$ is $C^{1,1}$ by \eqref{eq:CKS}. Furthermore, for Lipschitz $D$ it also explodes as $t\to 0^+$ because of the proof of Lemma~\ref{lem:survmonot} and \cite[Lemma~3]{MR1438304}.
\end{remark}
\begin{lemma}\label{lem:l1loc}
	If $u$ is caloric in $(T_1,T_2)\times D$ for some $T_1<T_2$, then $u\in L^1_{\rm loc}((T_1,T_2)\times \mR^d)$.
\end{lemma}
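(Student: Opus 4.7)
The plan is to prove that every point $(t_*,y_*)\in(T_1,T_2)\times\mR^d$ admits an open neighborhood on which $u$ is Lebesgue-integrable; a compactness argument then extends this to arbitrary compact subsets.

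Fix $(t_*,y_*)$. Since $D$ is open and nonempty, one can pick an auxiliary point $x_0\in D$ with $x_0\neq y_*$, then a radius $r>0$ such that $\overline{B(x_0,r)}\subset D$ and $y_*\notin\overline{B(x_0,r)}$, and finally $\varepsilon>0$ with $[t_*-\varepsilon,t_*+\varepsilon]\subset(T_1,T_2)$. Set $a:=t_*-\varepsilon$, $t_0:=t_*+\varepsilon$. The cylinder $G:=(a,t_0)\times B(x_0,r)$ is then relatively compact in $(T_1,T_2)\times D$, so the mean value property \eqref{eq:para} for $u$ at $(t_0,x_0)$ combined with the explicit exit distribution of $\dot X$ from Lemma~\ref{lem:stexit} (equivalently, \eqref{eq:repr}) yields
\begin{equation*}
u(t_0,x_0)=\int_a^{t_0}\int_{B(x_0,r)^c}J^{B(x_0,r)}(t_0,x_0,s,z)\,u(s,z)\,dz\,ds+\int_{B(x_0,r)}p^{B(x_0,r)}_{2\varepsilon}(x_0,y)\,u(a,y)\,dy.
\end{equation*}
Since $u\geq 0$ and $u(t_0,x_0)<\infty$ by definition of caloricity, both summands are finite.

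The crucial step is to show that the lateral Poisson kernel $J^{B(x_0,r)}(t_0,x_0,\cdot,\cdot)$ is bounded below by a positive constant on the neighborhood $U:=(t_*-\varepsilon/2,t_*+\varepsilon/2)\times B(y_*,\rho/2)$ of $(t_*,y_*)$, where $\rho:=\dist(y_*,\overline{B(x_0,r)})>0$. For $(s,z)\in U$ the time lag $t_0-s$ lies in the compact interval $[\varepsilon/2,3\varepsilon/2]$, so $p^{B(x_0,r)}_{t_0-s}(x_0,\xi)$ is bounded below by a positive constant on $\overline{B(x_0,r/2)}$ by continuity and strict positivity (or directly by the factorization \eqref{factorization}); meanwhile $\nu(\xi,z)=c_{d,\alpha}|\xi-z|^{-d-\alpha}$ is bounded below on $\overline{B(x_0,r/2)}\times\overline{B(y_*,\rho/2)}$ because $|\xi-z|$ is bounded above there. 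Integrating in $\xi$ over $\overline{B(x_0,r/2)}$ then produces a positive lower bound $c>0$ for $J^{B(x_0,r)}$ on $U$, whence $\int_U u\leq u(t_0,x_0)/c<\infty$.

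A finite subcover of an arbitrary compact $K\subset(T_1,T_2)\times\mR^d$ by such neighborhoods gives $\int_K u<\infty$, which is the claim. I expect the only nontrivial step to be the positive lower bound on $J^{B(x_0,r)}$; everything else amounts to arranging the auxiliary cylinder $G$ so that $x_0$ ``sees'' a spatial neighborhood of $y_*$ through the jump kernel, which is possible precisely because we took $x_0\neq y_*$.
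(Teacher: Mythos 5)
Your proof is correct and rests on the same key observation as the paper's: the lateral Poisson kernel $J^{B}(t,x,s,z)$ of a ball $B\subset\subset D$ is bounded below by a positive constant away from the lateral singularity, so nonnegativity of $u$ and finiteness of the mean value at one interior point force local integrability of $u$ off $B$. The only (cosmetic) difference is organizational: you localize around each point $(t_*,y_*)$ and finish by compactness, whereas the paper covers all of $B(0,R)$ at once using two disjoint balls $B_1,B_2\subset D$ and the identity $(B(0,R)\setminus B_1)\cup(B(0,R)\setminus B_2)=B(0,R)$.
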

\begin{proof}
The proof is similar to the one of \cite[Lemma 4.5]{MR4088505}. First note that for any fixed $x\in D$, $r>0$, and $B=B(x,r)$, by \eqref{factorization} we have
\begin{align*}
J^{B}(t,x, s,z) = \int_{B} p_{t-s}^{B}(x,y)\nu(y,z)\, dy &\approx \int_B p_{t-s}(x,y)\mP^y(\tau_B > t-s)\nu(y,z)\, dy\\&\geq c\int_{B(x,r/2)} p_{t-s}(x,y)\, dy \geq C>0,
\end{align*}
with $C$ depending only on $r$ and $R$, where $\delta_B(z),t-s\leq R$. Thus, $J^B(t,x,\cdot,\cdot)$ is locally bounded from below. Now, take two disjoint balls $B_1,B_2\subseteq D$, centered at some points $x_1,x_2\in D$ respectively, and let $T_1<t_0<t<T_2$ and $R>0$. Since $u$ is nonnegative and caloric, for $i=1,2$ we get
\begin{align*}
\infty> u(t,x) \geq \int_{t_0}^t\int_{B_i^c} u(s,z) J^{B_i}(t,x,s,z)\, dz\, ds \geq C\int_{t_0}^t\int_{B(0,R)\setminus B_i} u(s,z) \, dz\, ds.
\end{align*}
Therefore $u\in L^1((t_0,t)\times (B(0,R)\setminus B_i))$ for $i=1,2$. But $B_1\cap B_2 = \emptyset$, so $u\in L^1((t_0,t)\times B(0,R))$. Since $R$ can be chosen arbitrarily large, the proof is complete.

\end{proof}
The following result shows that the so-called ancient solutions, i.e., functions caloric in a time interval of the form $(-\infty,T)$, can be conveniently studied by considering only the lateral Poisson integrals.
\begin{lemma}\label{lem:ancient}
	If $u$ is caloric in $(-\infty,T)\times D$ for some $T\in \mR$, then for all $x\in U\subset\subset D$ and $t<T$ we have 
	\begin{equation}\label{eq:ancient}
	u(t,x) = \mE^{(t,x)}[u(\tau_{(-\infty,t)\times U},X_{\tau_{(-\infty,t)\times U}})] = \int_{-\infty}^t\int_{U^c} J^U(t,x,s,z)u(s,z)\, dz \, ds.\end{equation}
	In particular, the integral on the right-hand side of \eqref{eq:ancient} is finite. 
\end{lemma}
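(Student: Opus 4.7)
The plan is to obtain \eqref{eq:ancient} by applying the mean value property to the truncated cylinders $G_R := (-R, t) \times U$ and letting $R \to \infty$. Each $G_R$ is relatively compact in $(-\infty, T) \times D$, so by the caloricity of $u$ and Lemma~\ref{lem:stexit},
\begin{equation*}
    u(t,x) = A_R + B_R,
\end{equation*}
where
\begin{equation*}
A_R := \int_{-R}^t \int_{U^c} J^U(t,x,s,z)\, u(s,z)\, dz\, ds, \qquad B_R := \int_U p^U_{t+R}(x,y)\, u(-R,y)\, dy.
\end{equation*}
Since $u\ge 0$, monotone convergence yields $A_R \nearrow \int_{-\infty}^t\int_{U^c} J^U(t,x,s,z)\, u(s,z)\, dz\, ds$, so the lemma reduces to showing that $B_R \to 0$; the finiteness of the claimed integral will follow automatically from the identity $u(t,x)=A_R+B_R<\infty$.

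The bound on $B_R$ will come from enlarging the spatial domain. Fix any open set $V$ with $U \subset\subset V \subset\subset D$ and apply the mean value property on the cylinder $(-R, t) \times V$, which is also relatively compact in $(-\infty, T)\times D$. Its bottom contribution gives
\begin{equation*}
\int_V p^V_{t+R}(x,y)\, u(-R,y)\, dy \le u(t,x).
\end{equation*}
The intrinsic ultracontractivity lower bound \eqref{eq:largetimesgen} on $V$, together with the fact that $\varphi_1^V$ is bounded below by a positive constant on the compact set $\overline{U}\subset V$, forces, for $R$ large,
\begin{equation*}
\int_U u(-R,y)\, dy \le C_{U,V,x}\, e^{\lambda_1(V)(t+R)}\, u(t,x).
\end{equation*}
On the other hand, the intrinsic ultracontractivity upper bound on $U$ together with the boundedness of $\varphi_1^U$ yields $\|p^U_{t+R}(x,\cdot)\|_{L^\infty(U)} \le C\, e^{-\lambda_1(U)(t+R)}$, and therefore
\begin{equation*}
B_R \le C'\, e^{(\lambda_1(V)-\lambda_1(U))(t+R)}\, u(t,x),
\end{equation*}
which tends to $0$ exponentially as $R \to \infty$, because the strict domain-monotonicity of the first Dirichlet eigenvalue gives $\lambda_1(V) < \lambda_1(U)$ when $U\subset\subset V$.

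The main obstacle is precisely this control of $B_R$: a priori, $u(-R, \cdot)$ could grow as $R\to\infty$, and the mean value property on $U$ alone only provides a matching $L^1$ bound that does not suffice to beat the decay of $p^U_{t+R}(x,\cdot)$. The key trick is to widen the spatial domain from $U$ to $V\supset\supset U$ to expose the spectral gap $\lambda_1(U)-\lambda_1(V)>0$, which is what ultimately closes the estimate.
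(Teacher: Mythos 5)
Your proposal is correct and follows essentially the same route as the paper: decompose via the mean value property on a truncated cylinder, pass to the limit by monotone convergence in the lateral term, and kill the bottom term by enlarging the spatial domain (the paper uses $D_n\supset\supset U$ where you use a generic $V$) so that intrinsic ultracontractivity \eqref{eq:largetimesgen} exposes the spectral gap $\lambda_1(V)<\lambda_1(U)$. The only point worth spelling out is the strict eigenvalue inequality, which the paper justifies by the scaling argument $qU\subset\subset V$ for some $q>1$, whence $\lambda_1(V)\le\lambda_1(qU)=q^{-\alpha}\lambda_1(U)<\lambda_1(U)$.
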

\begin{proof}
	Let $t,x,U$ be as in the statement. By the definition of caloricity, for $v<t$ we have
	\begin{align*}
	u(t,x) = \int_v^t\int_{U^c} J^U(t,x, s,z)u(s,z)\, dz\, ds + \int_U p_{t-v}^U(x,y) u(v,y)\, dy.
	\end{align*}
	The first integral on the right-hand side increases to the right-hand side of \eqref{eq:ancient} by the monotone convergence theorem and the second integral decreases. It suffices to prove that
\begin{equation}\label{eq:ancientlim}a:=\lim\limits_{v\to-\infty} \int_U p_{t-v}^U(x,y) u(v,y)\, dy = 0.
	\end{equation}
To this end note that
for every $v<t$,
\begin{align}\label{eq:contrad}\int_U p_{t-v}^U(x,y)u(v,y) \, dy \geq a.\end{align}
	Let $n >0$ be so large that $U\subset\subset D_n$ (see \eqref{eq:Dn}).  Recall that $\lambda_1(V)$ is the first eigenvalue of the Dirichlet fractional Laplacian for an open set $V$. We claim that \begin{align}\label{eq:lambdas}\lambda_1(D_n) < \lambda_1 (U).\end{align}
	A weak inequality is well known as the domain monotonicity. In order to prove the strict inequality, assume without loss of generality that $0\in U$. Then there exists $q>1$ such that $qU\subset\subset D_n$, so, by domain monotonicity,  $\lambda_1(D_n)\leq \lambda_1(qU)=q^{-\alpha} \lambda_1(U)$, which yields
 \eqref{eq:lambdas}.
	
	By \eqref{eq:largetimes}, \eqref{eq:largetimesgen}, and the fact that each eigenfunction is bounded from above and bounded from below away from the boundary, for $s<t$, $s\to -\infty$, we get
	\begin{align*}
		\infty>u(t,x) &\geq \int_{D_n} u(s,y) p_{t-s}^{D_n}(x,y) \, dy \geq \int_U u(s,y) p_{t-s}^{D_n}(x,y)\, dy \approx  \int_{U} u(s,y)e^{-\lambda_1(D_n)(t-s)}\, dy\\
		&= e^{(-\lambda_1(D_n) + \lambda_1(U))(t-s)}\int_U u(s,y)e^{-\lambda_1(U)(t-s)}\, dy\\ &\gtrsim e^{(-\lambda_1(D_n) + \lambda_1(U))(t-s)}\int_U u(s,y) p_{t-s}^U(x,y)\, dy.
	\end{align*}
By \eqref{eq:lambdas}, we must have $a=0$.
\end{proof}
\subsection{Caloric functions are continuous} This subsection is devoted to proving that caloric functions are continuous, hence locally bounded.
The proof is based on certain estimates for the kernel $J^D$, which may be of independent interest. Let us note in passing that \textit{bounded} caloric functions are known to be locally H\"older continuous \cite[Theorem~4.14]{MR2008600}.    
\begin{proposition}\label{prop:cont}
	Assume that $u$ is a nonnegative caloric function in $(T_0,T_1)\times D$ for some $T_0<T_1$. Then, $u$ is continuous and 
 locally bounded therein.
\end{proposition}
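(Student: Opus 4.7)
The plan is to prove continuity of $u$ at an arbitrary point $(t_0,x_0)\in(T_0,T_1)\times D$; local boundedness then follows immediately. The idea is to invoke the mean value property on a small space-time cylinder $G$ containing $(t_0,x_0)$ to decompose $u$ into two Poisson-type integrals whose continuity is established by dominated convergence. Fix $(t_0,x_0)$ and choose $r,\eps>0$ small enough that $\overline{G}\subset(T_0,T_1)\times D$, where $G=(a,b)\times U$, $U=B(x_0,r)$, $a<t_0<b$. By Lemma~\ref{lem:l1loc} we have $u\in L^1_{\rm loc}$, so by Fubini the bottom time $a$ can be chosen in a dense subset of $(t_0-2\eps,t_0-\eps)$ so that $u(a,\cdot)\in L^1(U)$. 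For $(t,x)\in G$, \eqref{eq:repr} gives
\[
u(t,x)=\underbrace{\int_a^t\!\!\int_{U^c}J^U(t,x,s,z)u(s,z)\,dz\,ds}_{I_1(t,x)}+\underbrace{\int_U p_{t-a}^U(x,y)u(a,y)\,dy}_{I_2(t,x)}.
\]
Continuity of $I_2$ on a smaller cylinder $G'\subset\subset G$ is immediate from dominated convergence: the kernel $p_{t-a}^U(x,y)$ is jointly continuous in $(t,x)$ for $t>a$ and uniformly bounded on $\overline{G'}$ (since $t-a$ stays bounded away from $0$), and $u(a,\cdot)\in L^1(U)$ supplies an integrable majorant.

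The core work is showing continuity of $I_1$. The integrand is pointwise continuous in $(t,x)$ for $s<t$ by continuity of $p_{t-s}^U$. For dominated convergence one needs a uniform upper bound of the form
\[
J^U(t,x,s,z)\le C\,J^U(t_*,x_0,s,z),\qquad (t,x)\in G',\ s<t,\ z\in U^c,
\]
where $t_*$ is a fixed time slightly larger than $\sup\{t:(t,x)\in G'\}$ with $(t_*,x_0)\in G$. This is a parabolic Harnack-type statement for the nonnegative caloric function $(t,x)\mapsto J^U(t,x,s,z)=P_{t-s}^U\nu(\cdot,z)(x)$ on $(s,\infty)\times U$, obtainable from the fractional-heat parabolic Harnack inequality (cf.~\cite{MR2008600}) together with the factorization \eqref{factorization}. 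The dominating function $C\,J^U(t_*,x_0,s,z)u(s,z)$ is then integrable over $(a,b)\times U^c$, since its integral equals $C\,I_1(t_*,x_0)\le C\,u(t_*,x_0)<\infty$ by the mean value property at $(t_*,x_0)$.

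The main obstacle is controlling $J^U(t,x,s,z)$ when $s$ is close to $t$, where the time interval available for parabolic Harnack inside $(s,\infty)\times U$ collapses. In this near-diagonal regime one would fall back on direct heat-kernel bounds: for $x\in B(x_0,r/2)$ we have $\delta_U(x)\ge r/2$, so \eqref{factorization} and \eqref{DensityApprox} yield uniform control on $p_{t-s}^U(x,\xi)$; moreover the total lateral mass satisfies $\int_{t-\delta}^t\!\int_{U^c}J^U(t,x,s,z)\,dz\,ds=\mP^x(\tau_U<\delta)\to 0$ uniformly in $x\in B(x_0,r/2)$ as $\delta\to 0$, so the contribution of the near-diagonal window can be absorbed once $u$ is known to be locally integrable (Lemma~\ref{lem:l1loc}). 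Combining the two regimes gives the required domination, dominated convergence delivers continuity of $I_1$ and hence of $u=I_1+I_2$ at $(t_0,x_0)$, and as $(t_0,x_0)$ was arbitrary, $u$ is continuous on $(T_0,T_1)\times D$ and therefore locally bounded.
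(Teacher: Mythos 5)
Your overall architecture is the same as the paper's: apply the mean value property on a small cylinder around $(t_0,x_0)$, split $u$ into the initial part $I_2$ and the lateral Poisson integral $I_1$, handle $I_2$ by dominated convergence, and handle $I_1$ by dominating $J^U(t,x,s,z)$ by a fixed integrable kernel $J^U(t_*,x_0,s,z)$ whose integral against $u$ is finite by the mean value property at $(t_*,x_0)$. You also correctly identify the crux: the domination must survive the regime $s\uparrow t$, where any parabolic Harnack argument degenerates.

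However, your resolution of that near-diagonal regime has a genuine gap. You argue that since the total lateral mass $\int_{t-\delta}^{t}\int_{U^c}J^U(t,x,s,z)\,dz\,ds=\mP^x(\tau_U<\delta)$ tends to $0$ uniformly, the near-diagonal contribution ``can be absorbed once $u$ is known to be locally integrable.'' This is a non sequitur: a family of nonnegative kernels whose total mass tends to zero, integrated against a function that is merely in $L^1_{\rm loc}$ (and, on $U^c$, possibly unbounded near $\partial U$ or near $\partial D$, and only constrained by finiteness of the Poisson integral), need not produce a vanishing integral---the kernels can concentrate exactly where $u$ is large. Nor can you appeal to $u(t,x)-\int_U p_\delta^U(x,y)u(t-\delta,y)\,dy\to 0$, since that uses the continuity you are trying to prove. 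What is actually needed is a quantitative almost-monotonicity in time of the lateral kernel, $J^U(t,x,s,z)\lesssim J^U(t',x,s,z)$ valid for all $0<t-s\le t'-s$, including $t-s\to 0$; this is Lemma~\ref{lem:JDest}, inequality \eqref{eq:JDest2}, in the paper. Its proof is not a Harnack argument: it rests on the approximate factorization \eqref{factorization} together with Lemma~\ref{lem:DHKmonot}, which in turn requires the weak lower scaling of the survival probability, $\mP^y(\tau_D>s)/\mP^y(\tau_D>t)\le C(s/t)^{-\sigma}$ (Lemma~\ref{lem:survmonot}), a nontrivial estimate proved via the boundary decay of $\Phi$ and the Harnack chain geometry. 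Without this ingredient (or an equivalent), the dominated convergence step for $I_1$ does not close, so the proof as written is incomplete precisely at the point you flagged as the main obstacle.
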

We fix arbitrary $(t_0,x_0)\in (T_0,T_1)\times D$, $r\in (0,\delta_D(x_0)/2)$, and let $B_\rho = B(x_0,\rho)$ for $\rho>0$. We first establish some basic facts about the lateral Poisson kernel. With a slight conflict of notation, we introduce the Euclidean distance between sets $A,B\in \Rd$,
$$
d(A,B):=\inf\{|b-a|:\; a\in A,b\in B\}.$$
\begin{lemma}\label{lem:JDest}Let $D$ be a Lipschitz open set, $U\subset\subset D$, and $0<T<\infty$. Then,
		\begin{align}\label{eq:JDest1}
			J^D(t,x,s,z) \approx J^D(t,x_0,s,z),\quad x\in U,\ z\in D^c,\ 0<t-s<T,
		\end{align}
		and
		\begin{align}\label{eq:JDest2}
				J^D(t,x,s,z) \lesssim J^D(t',x,s,z),\quad x\in U,\ z\in D^c,\ 0<t-s\leq t'-s<T,
		\end{align}
			with the comparability constants depending only on $d,\alpha,\unD,d(U,D^c)$, and $T$.
\end{lemma}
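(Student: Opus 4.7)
The plan is to establish \eqref{eq:JDest1} by combining the approximate factorization \eqref{factorization} with direct heat-kernel estimates, and then derive \eqref{eq:JDest2} from \eqref{eq:JDest1} via Chapman--Kolmogorov.

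For \eqref{eq:JDest1}, let $u=t-s\in (0,T)$ and use \eqref{factorization} to obtain
$$J^D(t,x,s,z) \approx \mP^x(\tau_D>u)\int_D p_u(x,\xi)\,\mP^\xi(\tau_D>u)\,\nu(\xi,z)\,d\xi,$$
and similarly at $x_0$. Since $\overline U\cup\{x_0\}$ is a compact subset of $D$, $\mP^y(\tau_D>T)\geq c_0>0$ uniformly for $y\in\overline U\cup\{x_0\}$ with $c_0=c_0(d,\alpha,\unD,d(U,D^c),T)$, and hence $\mP^y(\tau_D>u)\in[c_0,1]$ for $u\in(0,T]$. Thus the prefactor at $x$ is comparable to the one at $x_0$, and the assertion reduces to showing
$$\int_D p_u(x,\xi)\,h_u(\xi)\,d\xi \approx \int_D p_u(x_0,\xi)\,h_u(\xi)\,d\xi, \qquad h_u(\xi):=\mP^\xi(\tau_D>u)\nu(\xi,z).$$
I would split on $u$. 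For $u\geq u_0$ with a fixed threshold $u_0>0$, \eqref{DensityApprox} gives $p_u(y,\xi)$ comparable to a constant (depending only on $u_0,T,\diam D$) for $y\in\overline U\cup\{x_0\}$ and $\xi\in D$, so the two integrals are pointwise-comparable. For $u\in (0,u_0)$ with $u_0$ small enough that $\mP^x(\tau_{B(x,d_0/4)}>u_0)\geq 1/2$, where $d_0:=d(U,D^c)$, the mass of $p_u(x,\cdot)$ is concentrated on $B(x,d_0/4)\subset D$. On this ball $\nu(\xi,z)\approx\nu(x,z)$ and $\mP^\xi(\tau_D>u)\approx 1$, which allows one to show that both integrals are comparable to $\nu(x,z)$, respectively $\nu(x_0,z)$. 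Finally, the comparison $\nu(x,z)\approx\nu(x_0,z)$ holds uniformly for $x\in U$ and $z\in D^c$, because $|x-z|$ and $|x_0-z|$ are bounded below by $d_0\wedge\delta_D(x_0)$ and differ by at most $\diam D$.

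For \eqref{eq:JDest2}, the Chapman--Kolmogorov equations and the symmetry of $p_t^D$ give, for $u=t-s\le u'=t'-s$,
$$J^D(t',x,s,z) = \int_D p_{u'-u}^D(x,\eta)\,J^D(t,\eta,s,z)\,d\eta.$$
Fix $U'$ with $U\subset\subset U'\subset\subset D$. Applying \eqref{eq:JDest1} with $U'$ in place of $U$, one has $J^D(t,\eta,s,z)\gtrsim J^D(t,x_0,s,z)$ for $\eta\in U'$, and therefore
$$J^D(t',x,s,z)\gtrsim J^D(t,x_0,s,z)\int_{U'} p_{u'-u}^D(x,\eta)\,d\eta \geq J^D(t,x_0,s,z)\,\mP^x(\tau_{U'}>u'-u).$$
Since $d(U,(U')^c)>0$, the survival probability satisfies $\mP^x(\tau_{U'}>u'-u)\geq \mP^x(\tau_{U'}>T)\geq c'>0$ uniformly for $x\in U$ and $u'-u\in (0,T)$. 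Using \eqref{eq:JDest1} once more to replace $J^D(t,x_0,s,z)$ with $J^D(t,x,s,z)$, we obtain \eqref{eq:JDest2}.

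The principal obstacle is the upper bound in the small-$u$ regime of \eqref{eq:JDest1}, especially when $z$ approaches $\partial D$ so that $\nu(\cdot,z)$ is singular on $D$. The factorization is essential there: the decay of $\mP^\xi(\tau_D>u)$ as $\xi\to\partial D$ compensates the blow-up of $\nu(\xi,z)$ near the singularity, and the quantitative comparison relies on the Lipschitz-geometry estimates for $\mP^\xi(\tau_D>u)$ and $\Phi$ near $\partial D$ from \cite{MR2722789,MR1991120}.
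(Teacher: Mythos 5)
Your derivation of \eqref{eq:JDest2} from \eqref{eq:JDest1} via Chapman--Kolmogorov and the lower bound $\int_{U'}p^D_{u'-u}(x,\eta)\,d\eta\geq \mP^x(\tau_{U'}>u'-u)\geq c'$ is correct and is in fact a cleaner route than the paper's, which instead proves an almost-increasingness property of $u\mapsto p_u^D(x,y)$ (Lemma~\ref{lem:DHKmonot}, resting on the relative survival-probability bound of Lemma~\ref{lem:survmonot}). The large-$u$ half of \eqref{eq:JDest1} is also fine.

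The gap is in the small-$u$ half of \eqref{eq:JDest1}. The intermediate claim that $\int_D p_u(x,\xi)\,\mP^\xi(\tau_D>u)\,\nu(\xi,z)\,d\xi \approx \nu(x,z)$ is false when $z$ is close to $\partial D$. Concentration of $p_u(x,\cdot)$ on $B(x,d_0/4)$ only yields the lower bound $\gtrsim\nu(x,z)$; for the matching upper bound you must control the contribution of $\xi\in D\setminus B(x,d_0/4)$, where $\xi$ may approach $z$ and $\nu(\xi,z)$ blows up. Contrary to what you assert at the end, the decay of $\mP^\xi(\tau_D>u)$ does \emph{not} compensate this blow-up: already in the half-space model, $\mP^\xi(\tau_D>u)\approx 1\wedge(\delta_D(\xi)^{\alpha/2}u^{-1/2})$ and a polar computation around $z$ gives $\int_{D\cap B(z,1)}\mP^\xi(\tau_D>u)\,|\xi-z|^{-d-\alpha}\,d\xi\gtrsim u^{-1}+u^{-1/2}\,d(z,D)^{-\alpha/2}$, which diverges as $z\to\partial D$ for fixed $u$ (and $J^D(t,x,s,z)=+\infty$ for $z\in\partial D$), whereas $\nu(x,z)$ stays bounded because $|x-z|\geq d_0$. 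So neither integral can be reduced to $\nu(\cdot,z)$, and the chain ``integral at $x$ $\approx\nu(x,z)\approx\nu(x_0,z)\approx$ integral at $x_0$'' breaks. The repair is to compare the two integrals to \emph{each other} rather than to $\nu$: fix $U\subset\subset U'\subset\subset D$ (with $x_0\in U$, say) and split the $\xi$-integral over $U'$ and $D\setminus U'$. On $U'$ your argument goes through ($\nu(\xi,z)\approx\nu(x_0,z)$, $\mP^\xi(\tau_D>u)\approx1$, and $\int_{U'}p_u(x,\xi)\,d\xi\approx\int_{U'}p_u(x_0,\xi)\,d\xi\approx1$), while on $D\setminus U'$ one has $|x-\xi|\approx|x_0-\xi|$ (both lie in $[d(U,(U')^c),\diam D]$), hence $p_u(x,\xi)\approx p_u(x_0,\xi)$ pointwise by \eqref{DensityApprox}; the integrands over $D\setminus U'$ are then comparable term by term, whether or not the integral is finite. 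This is exactly the paper's argument, and it works uniformly in $u$, so no time-splitting is needed.
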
 
\begin{proof}
	Let $U'$ be such that $U\subset\subset U' \subset\subset D$. We pick $U'$ so that the constants below  depend only on $D$ and $U$, e.g., by assuming $d(U,D^c)/2 \geq d(U',D^c) \geq d(U,D^c)/3$. We first prove \eqref{eq:JDest1}. By \eqref{factorization},
	\begin{align}
		\nonumber J^D(t,x,s,z)& = \int_D p_{t-s}^D(x,y)\nu(y,z)\, dy\\
		&\approx \mP^x(\tau_D>t-s)\int_D p_{t-s}(x,y)\mP^y (\tau_D>t-s)\nu(y,z)\, dy\nonumber \\
		&\approx \mP^{x_0}(\tau_D>t-s)\bigg(\int_{D\setminus U'} + \int_{U'}\bigg) p_{t-s}(x,y)\mP^y (\tau_D>t-s)\nu(y,z)\, dy,\label{eq:JDaux}
	\end{align}
	with constants depending on $d,\alpha,\unD,d(U,D^c)$, and $T$. For $y\in D\setminus U'$, $|x-y| \approx |x_0-y|$, so by \eqref{DensityApprox},
	\begin{align*}
	\int_{D\setminus U'}p_{t-s}(x,y)\mP^y (\tau_D>t-s)\nu(y,z)\, dy \approx 	\int_{D\setminus U'}p_{t-s}(x_0,y)\mP^y (\tau_D>t-s)\nu(y,z)\, dy.
	\end{align*}
	For $y\in U'$, $\mP^y(\tau_D>t-s)\approx 1$ and $\nu(y,z)\approx \nu(x_0,z)$. Using this and the fact that $U\subset\subset U'$, we find that
	\begin{align*}
		\int_{U'} p_{t-s}(x,y)\mP^y (\tau_D>t-s)\nu(y,z)\, dy &\approx \nu(x_0,z)\int_{U'}p_{t-s}(x,y)\, dy \approx \nu(x_0,z)\int_{U'}p_{t-s}(x_0,y)\, dy\\
		&\approx 	\int_{U'} p_{t-s}(x_0,y)\mP^y (\tau_D>t-s)\nu(y,z)\, dy.
	\end{align*}
	Coming back to \eqref{eq:JDaux}, we obtain \eqref{eq:JDest1}. We now proceed to proving \eqref{eq:JDest2}. We split in a similar way:
	\begin{align*}
		J^D(t,x,s,z) = \bigg(\int_{U'} + \int_{D\setminus U'}\bigg) p_{t-s}^D(x,y)\nu(y,z)\, dy.
	\end{align*}
	By Lemma~\ref{lem:DHKmonot}, 
	\begin{align*}
		\int_{D\setminus U'} p_{t-s}^D(x,y)\nu(y,z)\, dy \lesssim 	\int_{D\setminus U'} p_{t'-s}^D(x,y)\nu(y,z)\, dy
	\end{align*}
	For the integral over $U'$ we use:\eqref{factorization}
	\begin{align*}
		\int_{U'} p_{t-s}^D(x,y)\nu(y,z)\, dy \approx \nu(x_0,z) \int_{U'}p_{t-s}^D(x,y)\, dy  \approx  \nu(x_0,z) \int_{U'}p_{t-s}(x,y) \mP^x(\tau_D>t-s)\mP^y(\tau_D>t-s)\, dy.
	\end{align*}
	For $w\in U'$ and $0<t-s<T$, we have $\mP^w(\tau_D > t-s)\approx 1$ and by \eqref{DensityApprox}, $\int_{U'}p_{t-s}(x,y)\, dy \approx 1$, with comparability constants depending only on $T,U'$, and $\unD$. It follows that 
	\begin{align*}
		&\nu(x_0,z) \int_{U'}p_{t-s}(x,y) \mP^x(\tau_D>t-s)\mP^y(\tau_D>t-s)\, dy\\ \approx &\,\nu(x_0,z) \int_{U'}p_{t'-s}(x,y) \mP^x(\tau_D>t'-s)\mP^y(\tau_D>t'-s)\, dy\\ \approx  &\int_{U'} p_{t'-s}^D(x,y)\nu(y,z)\, dy,
	\end{align*}
	which ends the proof.
\end{proof}
\begin{proof}[Proof of Proposition~\ref{prop:cont}] We will show continuity at the fixed point $(t_0,x_0)$. Let $x\in B_{r/2}$, $t_1\in (T_0,t_0)$ and $t\in (t_1,T_1)$, so that $T_1<t_1<t<T_0$. We have
\begin{align*}
	 u(t,x) &= \int_{B_r} u(t_1,y)p_{t-t_1}^{B_r}(x,y)\, dy + \int_{t_1}^t\int_{B_r} u(\tau,z) J^{B_r}(t,x,\tau,z)\, dz\, d\tau,\\
     u(t_0,x_0) &= \int_{B_r} u(t_1,y)p_{t_0-t_1}^{B_r}(x,y)\, dy + \int_{t_1}^{t_0}\int_{B_r} u(\tau,z) J^{B_r}(t_0,x,\tau,z)\, dz\, d\tau.
\end{align*}
Since $u$ is nonnegative and caloric, all integrals above are finite. For $(t,x)$ sufficiently close to $(t_0,x_0)$, we have $p_{t-t_1}^{B_r}(x,y) \approx p_{t_0-t_1}^{B_r}(x_0,y)$ uniformly in $y$. Therefore, by the dominated convergence theorem,
\begin{align*}
	\int_{B_r} u(t_1,y)p_{t-t_1}^{B_r}(x,y)\, dy \mathop{\longrightarrow}\limits_{(t,x)\to (t_0,x_0)} 	\int_{B_r} u(t_1,y)p_{t_0-t_1}^{B_r}(x_0,y)\, dy.
\end{align*}
Therefore it remains to show that 
\begin{align}\label{eq:lateralpart}
	\int_{t_1}^t\int_{B_r} u(\tau,z) J^{B_r}(t,x,\tau,z)\, dz\, d\tau \mathop{\longrightarrow}\limits_{(t,x)\to (t_0,x_0)} 	\int_{t_1}^{t_0}\int_{B_r} u(\tau,z) J^{B_r}(t_0,x_0,\tau,z)\, dz\, d\tau.
\end{align}
Assume that $t>t_0$ (we skip the other case, as it is similar). Then,
\begin{align*}
	&\bigg|\int_{t_1}^t\int_{B_r} u(\tau,z) J^{B_r}(t,x,\tau,z)\, dz\, d\tau -	\int_{t_1}^{t_0}\int_{B_r} u(\tau,z) J^{B_r}(t_0,x_0,\tau,z)\, dz\, d\tau\bigg|\\
	\leq& \int_{t_1}^{t_0}\int_{B_r} u(\tau,z) |J^{B_r}(t,x,\tau,z) - J^{B_r}(t_0,x_0,\tau,z)|\, dz\, d\tau + \int_{t_0}^t\int_{B_r} u(\tau,z) J^{B_r}(t,x,\tau,z)\, dz\, d\tau=: I_1 + I_2.
\end{align*}
By Lemma~\ref{lem:JDest}, we have $J^{B_r}(t,x,\tau,z) \lesssim J^{B_r}(t_0+\eps,x_0,\tau,z)$ for $t_1\leq \tau\leq t\leq t_0+\eps$, $x\in B_{r/2}$, and $z\in B_r^c$. Therefore by the dominated convergence theorem, $I_2 \to 0$. Furthermore, by the properties of $p_t^{B_r}$ and the dominated convergence theorem, it is easy to see that $J^{B_r}(\cdot,\cdot,\tau,z)$ is continuous on $(\tau,\infty)\times B_r$ for all $\tau\in \mR$ and $z\in D^c$. Therefore, using the bounds of Lemma~\ref{lem:JDest} and the dominated convergence theorem once again, we find that $I_1\to 0$ as well. This ends the proof.
\end{proof}
\section{Representation of caloric functions in Lipschitz open sets}\label{sec:repr}
We first discuss the representation for functions caloric on $[0,T)\times D$, where the meaning of the initial condition is clearer. We then use this case to resolve the situation of functions caloric in $(0,T)\times D$.
	\subsection{Functions caloric up to time $0$}
	\begin{lemma}\label{lem:decomp}
		Assume that $u$ is a nonnegative caloric function in $\dot{D}:=[0,T)\times D$. Then there exists a unique decomposition $u = r + s$, where $r$ is regular caloric in $\dot{D}$ and $s$ in singular caloric in $\dot{D}$.
	\end{lemma}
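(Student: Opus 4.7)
The plan is to define the regular part as the Poisson integral
\begin{equation*}
r(t,x) := \mE^{(t,x)}[u(\dot{X}_{\tau_{\dot D}})] = \int_0^t\!\!\int_{D^c} J^D(t,x,\tau,z)\, u(\tau,z)\,dz\,d\tau + \int_D p_t^D(x,y)\, u(0,y)\,dy
\end{equation*}
for $(t,x)\in (0,T)\times D$, extended by $r(t,x):=u(t,x)$ on the parabolic boundary $D^p$; the second equality is Lemma~\ref{lem:stexit}. The singular part is then $s:=u-r$, and the task is to verify that $r$ is finite with $s\ge 0$, that $r$ is regular caloric and $s$ singular caloric, and that the decomposition is unique.

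The key step is to show $r\le u$ on $\dot D$. I will exploit that caloricity in $[0,T)\times D$ is tested against open $G\subset\subset [0,T)\times D$, whose closure may touch $\{0\}\times D$. With $D_n:=\{x\in D:\delta_D(x)>1/n\}$, the cylinders $G_n:=(0,T-1/n)\times D_n$ are open and satisfy $G_n\subset\subset \dot D$. Since a trajectory of $\dot X$ starting at $(t,x)\in G_n$ cannot reach the upper face of $G_n$, the exit distribution coincides with the one given by Lemma~\ref{lem:stexit} applied to the cylinder $(0,t)\times D_n$, so the MVP reads
\begin{equation*}
u(t,x) = \int_0^t\!\!\int_{D_n^c} J^{D_n}(t,x,\tau,z)\, u(\tau,z)\,dz\,d\tau + \int_{D_n} p_t^{D_n}(x,y)\, u(0,y)\,dy.
\end{equation*}
Splitting $D_n^c=D^c\sqcup (D\setminus D_n)$, discarding the nonnegative $(D\setminus D_n)$-piece, and letting $n\to\infty$ via monotone convergence (with $p_t^{D_n}\nearrow p_t^D$ and hence $J^{D_n}\nearrow J^D$) yields $u(t,x)\ge r(t,x)$. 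Therefore $r<\infty$, $s\ge 0$ on $\dot D$, and $s\equiv 0$ on $D^p$ by construction.

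It remains to check the appropriate caloricity of $r$ and $s$ and uniqueness. The strong Markov property shows that for every open $G\subset\subset\dot D$ and $(t,x)\in G$, splitting according to whether $\dot X$ has already left $\dot D$ and using $r=u$ on $D^p$,
\begin{equation*}
\mE^{(t,x)}[r(\dot X_{\tau_G})] = \mE^{(t,x)}[u(\dot X_{\tau_{\dot D}})] = r(t,x),
\end{equation*}
so $r$ is caloric in $\dot D$ and, by taking $G=\dot D$, regular caloric. Since all relevant integrals are dominated by $u(t,x)<\infty$, the MVPs of $u$ and $r$ can be subtracted on each $G\subset\subset\dot D$ to give the MVP of $s$; together with $s\equiv 0$ on $D^p$, this shows $s$ is singular caloric. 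For uniqueness, if $u=r_1+s_1=r_2+s_2$, then $r_1=r_2=u$ on $D^p$, and regular caloricity together with the Poisson representation force $r_1=r_2$ throughout $\dot D$, hence $s_1=s_2$.

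The main obstacle I anticipate is the cylinder approximation in the second paragraph: one must use open cylinders whose closures touch $\{0\}\times D$ in order to retain the initial-time term $\int_D p_t^D(x,y)u(0,y)\,dy$ in the limit, and this is precisely what the definition of caloricity in $[0,T)\times D$ is engineered to permit. Everything else (the strong Markov step and the uniqueness argument) is then routine.
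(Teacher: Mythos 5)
Your proposal is correct and follows essentially the same route as the paper: both identify the regular part as $r(t,x)=\mE^{(t,x)}u(\dot X_{\tau_{\dot D}})$ by applying the mean value property on the approximating cylinders $(0,t)\times D_n$ and passing to the limit by monotone convergence, define $s:=u-r$, and deduce uniqueness from the fact that a regular caloric function is determined by its values on $D^p$. The only cosmetic difference is bookkeeping: the paper splits the exit distribution into three terms $i_n+l_n+s_n$ and tracks each limit separately, whereas you discard the $(D\setminus D_n)$-contribution to get $r\le u$ directly; the substance is identical.
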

	\begin{proof}
		Let $t<T$. Since $u$ has the mean value property in every $\dot{D}_n= (0,t)\times D_n$ (see \eqref{eq:Dn}), we have
		\begin{align*}
		u(t,x) = \mE^{(t,x)} u(\dot{X}_{\tau_{\dot{D}_n}}) =: i_n(t,x) + l_n(t,x) + s_n(t,x),
		\end{align*}
		where
		\begin{align*}
		&i_n(t,x) = \mE^{(t,x)}[u(\dot{X}_{\tau_{\dot{D}_n}})\semicol  \tau_{D_n} > t],\\
		&l_n(t,x) = \mE^{(t,x)}[u(\dot{X}_{\tau_{\dot{D}_n}})\semicol  \tau_{D_n} < t,\, \tau_{D_n} = \tau_D],\\
		&s_n(t,x) = \mE^{(t,x)}[u(\dot{X}_{\tau_{\dot{D}_n}})\semicol  \tau_{D_n} < t,\, \tau_{D_n} < \tau_D].
		\end{align*}
		We let $n\to \infty$. By the monotone convergence, we get
		\begin{equation*}i_n(t,x) = \mE^{(t,x)}[u(\dot{X}_t)\semicol \tau_{D_n} > t] \nearrow \mE^{(t,x)}[u(\dot{X}_t)\semicol \tau_{D}>t] =: i(t,x),
		\end{equation*}
		and by \cite[(5.40)]{MR1438304},
		\begin{equation*}
		l_n(t,x) = \mE^{(t,x)}[u(\dot{X}_{\tau_{\dot{D}}})\semicol  \tau_D < t,\, \tau_{D_n} = \tau_D] \nearrow \mE^{(t,x)}[u(\dot{X}_{\tau_{\dot{D}}})\semicol  \tau_D < t] =: l(t,x),
		\end{equation*}
		the limits being finite because all $i_n$, $l_n$, and $s_n$ are nonnegative. So, $s_n(t,x)$ converges to some $s(t,x)$.
		Since $r(t,x):=i(t,x) + l(t,x) = \mE^{(t,x)}u(\dot{X}_{\tau_{\dot{D}}})$, $r$ is regular caloric. By inspecting the definition of $s_n$, we find that $s$ is singular caloric: indeed, if $X_t$ starts from $x\in D^c$, then the event $\tau_{D_n} < \tau_D$ has probability $0$, so $s_n(t,x)=0$ for $x\in D^c$, and if $\dot{X}$ starts from $(0,x)$, $x\in D$, then $s_n(0,x) = 0$ because $\tau_{D_n} \ge 0$.
		
		Assume that there is another decomposition $u = r' + s'$. Since $s'=s=0$ on $D^p$, we have that $r -  r' = 0$ on $D^{p}$ as well and therefore $r - r' = 0$ in $\dot{D}$, because $r-r'$ is regular caloric on $\dot{D}$.
	\end{proof}
	We next give an integral representation for the singular caloric part, with the use of the parabolic Martin kernel. We first prove the following technical result.
	\begin{lemma}\label{lem:dnd}
		Let $x\in D$ and $0<\varepsilon<T$ be fixed. Then there exists a modulus of continuity $\omega$, independent of $y$ and $t\in [\varepsilon,T]$, such that for $n$ large we have
		\begin{align}\label{eq:modulus}
		\bigg|\frac{p_t^{D_n}(x,y)}{\mP^y(\tau_{D_n}>1)} - \frac{p_t^{D}(x,y)}{\mP^y(\tau_D>1)}\bigg| \leq \omega\bigg(\frac 1n\bigg),\quad y\in D_n,\ t\in [\varepsilon,T].
		\end{align}
	\end{lemma}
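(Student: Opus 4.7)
The plan is to decompose the difference algebraically and reduce to three sub-estimates. Writing
\begin{align*}
\frac{p_t^{D_n}(x,y)}{\mP^y(\tau_{D_n}>1)} - \frac{p_t^{D}(x,y)}{\mP^y(\tau_D>1)} &= \frac{p_t^{D_n}(x,y) - p_t^D(x,y)}{\mP^y(\tau_D>1)} + \frac{p_t^{D_n}(x,y)}{\mP^y(\tau_{D_n}>1)}\cdot \frac{\mP^y(\tau_D>1) - \mP^y(\tau_{D_n}>1)}{\mP^y(\tau_D>1)},
\end{align*}
it suffices to show: (a) $p_t^{D_n}(x,y)/\mP^y(\tau_{D_n}>1)\le C$ uniformly in $n$, $y\in D_n$, and $t\in [\varepsilon,T]$; (b) $[\mP^y(\tau_D>1)-\mP^y(\tau_{D_n}>1)]/\mP^y(\tau_D>1)\le \omega(1/n)$; and (c) $[p_t^D(x,y)-p_t^{D_n}(x,y)]/\mP^y(\tau_D>1)\le \omega(1/n)$. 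For (a), I would apply the approximate factorization \eqref{factorization} to $D_n$ to obtain $p_t^{D_n}(x,y)/\mP^y(\tau_{D_n}>1)\approx \mP^x(\tau_{D_n}>t)\, p_t(x,y)\,[\mP^y(\tau_{D_n}>t)/\mP^y(\tau_{D_n}>1)]$ and invoke \cite[Theorem~2]{MR2722789} together with \eqref{e.ipp} to bound the last ratio, using that for $t\in[\varepsilon,T]$ the interior point $A_{t^{1/\alpha}}(y)$ lies in a ball of controlled radius so $\Phi(A_{t^{1/\alpha}}(y))$ is bounded below. For $n$ large the sets $D_n$ inherit the Lipschitz character of $D$ with comparable constants, so the comparability constants in \eqref{factorization} applied to $D_n$ are uniform in $n$.

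For (b) and (c), the strong Markov property at $\tau_{D_n}$ yields
\begin{align*}
\mP^y(\tau_D>1) - \mP^y(\tau_{D_n}>1) &= \mE^y\bigl[\mP^{X_{\tau_{D_n}}}(\tau_D > 1-\tau_{D_n});\ \tau_{D_n}\le 1,\ X_{\tau_{D_n}}\in D\setminus D_n\bigr],\\
p_t^D(x,y) - p_t^{D_n}(x,y) &= \mE^x\bigl[p^D_{t-\tau_{D_n}}(X_{\tau_{D_n}},y);\ \tau_{D_n}\le t,\ X_{\tau_{D_n}}\in D\setminus D_n\bigr],
\end{align*}
and on these events $\delta_D(X_{\tau_{D_n}})\le 1/n$. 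I would split each expectation according to whether the remaining time $1-\tau_{D_n}$ (resp.\ $t-\tau_{D_n}$) is larger or smaller than $\varepsilon/4$. For the large-remaining-time portion, I would use \eqref{factorization} together with $\mP^z(\tau_D>s)\lesssim \varphi_1(z)$ (via \eqref{eq:largetimesgen}) and the fact that $\omega_0(r):=\sup\{\varphi_1(z):\delta_D(z)\le r\}\to 0$ with $r$, by continuity of $\varphi_1\in C_0(D)$; combined with $\mP^y(\tau_D>1)\approx \varphi_1(y)$ this yields a relative error of size $\omega_0(1/n)$. For the short-remaining-time portion, I would bound the probability of $\{\tau_{D_n}\in[1-\varepsilon/4,1],\ X_{\tau_{D_n}}\in D\setminus D_n\}$ via the Ikeda--Watanabe formula \eqref{eq:IW}, estimating $\int\!\!\int\!\!\int p^{D_n}_s(y,v)\nu(v,z)\,dv\,dz\,ds$ over the short time interval and the thin collar $D\setminus D_n$; since $|D\setminus D_n|\to 0$ and $\nu$-integrals over this collar are small (after using boundedness of $\nu$ away from the diagonal and the harmonic-measure estimate that the short-time exit from $D_n$ which lands in $D$ is rare), the whole contribution is $o(1)\cdot \mP^y(\tau_D>1)$.

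The main obstacle is keeping these estimates uniform in $y$, especially for $y$ close to $\partial D_n$, where both $\mP^y(\tau_D>1)$ and $\mP^y(\tau_{D_n}>1)$ are small. The critical ingredient is that $\mP^y(\tau_{D_n}>1)/\mP^y(\tau_D>1)$ remains bounded away from $0$ uniformly for large $n$ and $y\in D_n$, which in turn relies on the uniform Lipschitz structure of $D_n$ and on estimates of type \eqref{eq:Liplarge}, \eqref{eq:Lipsmalls}. A secondary subtlety is that the short-remaining-time part of (b) and (c) must be handled without any smallness of $\mP^z(\tau_D>s)$, so one must exploit the geometric smallness of $D\setminus D_n$ itself.
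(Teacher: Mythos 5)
Your decomposition reduces the lemma to the sub-claims (b) and (c), and both of these are false for $y$ near $\partial D_n$; the ``critical ingredient'' you identify at the end is also false. Fix $n$ and let $y\in D_n$ approach a point $Q'\in\partial D_n$. Then $\mP^y(\tau_{D_n}>1)\to 0$ (a boundary point of $D_n$), while $\mP^y(\tau_D>1)\to \mP^{Q'}(\tau_D>1)>0$ because $\delta_D(Q')=1/n>0$. Hence $\mP^y(\tau_{D_n}>1)/\mP^y(\tau_D>1)\to 0$ rather than staying bounded away from $0$, and the quantity in (b) tends to $1$, not to $\omega(1/n)$. Likewise $p_t^{D_n}(x,y)\to 0$ while $p_t^D(x,y)\to p_t^D(x,Q')>0$, so the quantity in (c) tends to $p_t^D(x,Q')/\mP^{Q'}(\tau_D>1)$, which by \eqref{factorization} is comparable to $\mP^x(\tau_D>t)$ and is not small. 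In this regime the two terms of your algebraic identity are each of order one --- the first tends to $-p_t^D(x,Q')/\mP^{Q'}(\tau_D>1)$ and the second to the strictly positive boundary limit of $p_t^{D_n}(x,\cdot)/\mP^{\cdot}(\tau_{D_n}>1)$ at $Q'$ (the parabolic Martin kernel of $D_n$) --- and \eqref{eq:modulus} holds only because they nearly cancel; bounding them separately therefore cannot succeed. Your diagnosis of the obstacle is also off: near $\partial D_n$ only $\mP^y(\tau_{D_n}>1)$ is small, not $\mP^y(\tau_D>1)$.

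The paper sidesteps this boundary layer by a compactness argument: the pointwise convergence at each fixed $y\in D$ (the easy part, which your strong Markov computations do give) is upgraded to uniform convergence using the H\"older continuity in $y$ of $p_t^{D_n}(x,\cdot)/\mP^{\cdot}(\tau_{D_n}>1)$ on $\overline{D_n}$ with constants uniform in $n$ (Theorem~\ref{th:modulus} applied to the uniformly Lipschitz sets $D_n$), a H\"older-preserving extension to $\overline{D}$, and Arzel\`a--Ascoli. It is the equicontinuity that transfers smallness of the difference from interior points to points near $\partial D_n$; some uniform-in-$n$ modulus of continuity in $y$ appears unavoidable, and your proposal does not invoke one.
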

\begin{proof}	
	First note that the expression on the right-hand side of \eqref{eq:modulus} converges to 0 as $n\to\infty$ for every fixed $y\in D$ (the expression is considered only when $1/n<\delta_D(y)$).  In order to get \eqref{eq:modulus} we will show that the convergence in uniform by using the Arzel\`a--Ascoli theorem. Indeed, by Theorem~\ref{th:modulus}, we find that $\overline{D_n}\ni y\mapsto p_t^{D_n}(x,y)/\mP^y(\tau_{D_n}>1)$ are uniformly H\"older continuous for $n$ large and $t\in [\eps,T]$. Furthermore, it is well-known that a H\"older continuous function in $\overline{D_n}$ can be extended to a function on $\overline{D}$ with the same H\"older regularity, see, e.g., Banach \cite[IV (7.5)]{MR0043161}. If we denote the corresponding extensions by $f_n$ then by the Arzel\`a--Ascoli theorem, we find that
\begin{align*}
\bigg|f_n(t,y) - \frac{p_t^{D}(x,y)}{\mP^y(\tau_D>1)}\bigg| \leq \omega\bigg(\frac 1n\bigg),\quad y\in D,\ t\in [\varepsilon,T].
\end{align*}
In particular, \eqref{eq:modulus} follows.
\end{proof}
	\begin{theorem}\label{th:repr}
		Assume that $u$ is singular caloric in $[0,T)\times D$. Then there exists a nonnegative Borel measure $\mu$ on $\partial D\times [0,T)$ such that
		\begin{equation}\label{eq:representation}
			u(t,x) = \int_{[0,t)} \int_{\partial D} \eta_{t-s,Q}(x) \mu(dQ\, ds),\quad x\in D,\ t\in (0,T).
		\end{equation}
	\end{theorem}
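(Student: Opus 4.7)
The plan is to apply the mean value property of $u$ on approximating cylinders $(0,t)\times D_n$, kill the ``bottom'' and ``$D^c$'' terms using singular caloricity, and extract $\mu$ by weak compactness of the resulting family of lateral measures after projection to $\partial D$.

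For $t<T$ and $x\in D_n$, Lemma~\ref{lem:stexit} applied to $G_n=(0,t)\times D_n$, together with $u(0,\cdot)\equiv 0$ on $D$ and $u\equiv 0$ on $D^c$, yields
\begin{align}\label{eq:planMVP}
u(t,x)=\int_0^t\!\int_{D\setminus D_n}\! J^{D_n}(t,x,s,z)\,u(s,z)\,dz\,ds.
\end{align}
Fix a reference pair $(t_\star,x_\star)\in (0,T)\times D$, with $n$ large enough that $x_\star\in D_n$. Specialising \eqref{eq:planMVP} to $(t_\star,x_\star)$ shows that the nonnegative measures
\begin{align*}
\lambda_n(ds,dz):=J^{D_n}(t_\star,x_\star,s,z)\,u(s,z)\,ds\,dz,
\end{align*}
supported in $(0,t_\star)\times(D\setminus D_n)$, all have total mass $u(t_\star,x_\star)<\infty$. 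Using the Lipschitz regularity of $\partial D$, pick a measurable nearest-point projection $\pi\colon D\to\partial D$ and push $\lambda_n$ forward via $(s,z)\mapsto(s,\pi(z))$ to finite measures $\nu_n$ on $[0,t_\star]\times\partial D$ of the same mass. Prokhorov's theorem extracts a subsequence converging weakly to a finite measure $\nu$.

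Rewriting \eqref{eq:planMVP} for general $(t,x)\in(0,t_\star)\times D$ as
\begin{align*}
u(t,x)=\int_0^{t}\!\int_{D\setminus D_n}\!\frac{J^{D_n}(t,x,s,z)}{J^{D_n}(t_\star,x_\star,s,z)}\,\lambda_n(ds,dz)
\end{align*}
and passing to the limit should give
\begin{align*}
u(t,x)=\int_0^{t_\star}\!\int_{\partial D}\!\frac{\eta_{t-s,Q}(x)}{\eta_{t_\star-s,Q}(x_\star)}\,\nu(ds,dQ),
\end{align*}
with the convention \eqref{eq:etaneg}. Setting $\mu(dQ,ds):=\eta_{t_\star-s,Q}(x_\star)^{-1}\nu(ds,dQ)$ then produces \eqref{eq:representation} on $(0,t_\star)\times D$, and a diagonal argument along $t_\star\nearrow T$, with consistency of the resulting boundary measures, yields a single $\mu$ on $\partial D\times[0,T)$ for which \eqref{eq:representation} holds on all of $(0,T)\times D$.

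The main obstacle is the passage to the limit under the integral, which hinges on uniform convergence of $J^{D_n}(t,x,s,z)/J^{D_n}(t_\star,x_\star,s,z)$ to $\eta_{t-s,\pi(z)}(x)/\eta_{t_\star-s,\pi(z)}(x_\star)$ as $n\to\infty$ on the support of $\lambda_n$. To establish this I would expand
\begin{align*}
J^{D_n}(t,x,s,z)=\int_{D_n}\!\frac{p_{t-s}^{D_n}(x,y)}{\mP^y(\tau_{D_n}>1)}\,\mP^y(\tau_{D_n}>1)\,\nu(y,z)\,dy,
\end{align*}
invoke Theorem~\ref{th:modulus} and Lemma~\ref{lem:dnd} to conclude that the H\"older-regular quotient $p_{t-s}^{D_n}(x,\cdot)/\mP^{\cdot}(\tau_{D_n}>1)$ extends continuously to $\overline{D_n}$ uniformly in $n$ and converges uniformly to $\eta_{t-s,\cdot}(x)$, and use the factorization \eqref{factorization} to see that the positive weight $W_n(z):=\int_{D_n}\mP^y(\tau_{D_n}>1)\nu(y,z)\,dy$ is independent of $(t,x)$ and thus cancels in the ratio. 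Uniform domination near the corner $s=t_\star$, needed for the weak convergence to transfer the integral, follows from Corollary~\ref{cor:estimates} combined with \eqref{factorization}; continuity in $Q$ of the limiting ratio (which must be verified along the way) then allows the weak convergence $\nu_n\Rightarrow\nu$ to close the argument.
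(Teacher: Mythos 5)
Your setup (mean value property on $(0,t)\times D_n$, killing the initial and exterior terms by singular caloricity, extracting a limit measure by Prokhorov) matches the paper's, but the way you build the measures creates a gap that your sketch does not close. You keep the mass on the exterior variable $z\in D\setminus D_n$ and divide by $J^{D_n}(t_\star,x_\star,s,z)$, so everything hinges on
\begin{equation}
\frac{J^{D_n}(t,x,s,z)}{J^{D_n}(t_\star,x_\star,s,z)}
=\frac{\int_{D_n}\frac{p_{t-s}^{D_n}(x,y)}{\mP^y(\tau_{D_n}>1)}\,\mP^y(\tau_{D_n}>1)\nu(y,z)\,dy}
{\int_{D_n}\frac{p_{t_\star-s}^{D_n}(x_\star,y)}{\mP^y(\tau_{D_n}>1)}\,\mP^y(\tau_{D_n}>1)\nu(y,z)\,dy}
\;\longrightarrow\;\frac{\eta_{t-s,\pi(z)}(x)}{\eta_{t_\star-s,\pi(z)}(x_\star)}.
\end{equation}
Your justification---that the weight $W_n(z)=\int_{D_n}\mP^y(\tau_{D_n}>1)\nu(y,z)\,dy$ ``cancels in the ratio''---is not correct: $W_n(z)$ is merely the normalization of a weighted average, and a ratio of weighted averages of two \emph{different} functions equals the ratio of their boundary values at $\pi(z)$ only if the probability measures $\mP^y(\tau_{D_n}>1)\nu(y,z)\,dy/W_n(z)$ concentrate at $\pi(z)$ as $n\to\infty$. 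That concentration is a nontrivial quantitative statement about the competition between the singularity of $\nu(\cdot,z)$ and the decay of $\mP^y(\tau_{D_n}>1)$ near a Lipschitz boundary, where only non-matching power bounds are available; it would moreover have to hold uniformly over $z$ in the support of $\lambda_n$ and over $s$, and nothing in Theorem~\ref{th:modulus} or Lemma~\ref{lem:dnd} delivers it. A second unaddressed point: your integrand is discontinuous at $s=t$ (it vanishes for $s\geq t$), so weak convergence of $\nu_n$ does not transfer the integral unless you also show that no mass accumulates at time $t$; this needs a separate argument (the paper's Lemma~\ref{lem:noup}, which uses $\int_{t-\eps}^{t}\int J^{D_n}\,u = u(t,x)-P_\eps^{D_n}u(t-\eps,\cdot)(x)$ together with domain monotonicity and continuity of $u$), not just a domination bound from Corollary~\ref{cor:estimates}.

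The paper sidesteps the first problem by transferring the mass to the \emph{interior} variable before passing to the limit: it sets $\mu_n(dy\,ds)=\bigl(\int_{D\setminus D_n}\mP^y(\tau_{D_n}>1)\,u(s,z)\,\nu(y,z)\,dz\bigr)\,dy\,ds$, so that $u(t,x)=\int \frac{p_{t-s}^{D_n}(x,y)}{\mP^y(\tau_{D_n}>1)}\,\mu_n(dy\,ds)$ and the integrand is exactly the quotient that Theorem~\ref{th:modulus} and Lemma~\ref{lem:dnd} control uniformly up to $\overline{D}$, with boundary trace $\eta_{t-s,Q}(x)$; no boundary asymptotics of $J^{D_n}$ are ever needed. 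To salvage your route you would have to prove the concentration statement above; the simpler fix is to switch to the interior-variable measures.
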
 
	\begin{proof}
			Let $D_n$ be as in Lemma \ref{lem:decomp} and let $N$ be large enough, so that $x,x_0\in D_N$. Since $u$ is singular caloric, for natural $n>N$ we have
			\begin{align*}
			u(t,x) &=  \mE^{(t,x)}[u(\dot{X}_{\tau_{\dot{D}_n}})\semicol  \tau_{D_n} < t, X_{\tau_{D_n}} \in D\setminus D_n]\\
			&= \int_0^t\int_{D\setminus D_n} u(s,z)\int_D p_{t-s}^{D_n}(x,y)\nu(y,z)\, dy \, dz\, ds\\
			&= \int_0^t\int_D \frac{p_{t-s}^{D_n}(x,y)}{\mP^y(\tau_{D_n}>1)}\int_{D\setminus D_n} \mP^y(\tau_{D_n}>1)u(s,z) \nu(y,z)\, dz \, dy\, ds.
			\end{align*}
			We define $$\mu_n(dy\, ds) = \int_{D\setminus D_n} \mP^y(\tau_{D_n}>1) u(s,z) \nu(y,z) \, dz \, dy\, ds.$$ Note that by \eqref{factorization}, if we fix $\theta>0$, then we have $\mP^y(\tau_{D_n}>1)\lesssim p_{s+\theta}^{D_n}(x_0,y)$ uniformly in $s\in (0,t)$. Therefore, since $u$ is caloric, for $\theta$ sufficiently small we have
			\begin{align*}
				\int_0^t \int_{\mR^d}\mu_n(dy\, ds) \lesssim 	\int_0^t \int_{\mR^d}\ \int_{D\setminus D_n} p_{t+\theta-s}^{D_n}(x_0,y) u(s,z) \nu(y,z) \, dz \, dy\, ds \leq u(x_0,t+\theta),
			\end{align*}
			which means that the masses of $\mu_n$ are uniformly bounded. With this notation we have
			\begin{align*}
			u(t,x) = \int_0^t \int_D \frac{p_{t-s}^{D_n}(x,y)}{\mP^y(\tau_{D_n}>1)}\, \mu_n(dy\, ds).
			\end{align*} 
			The goal is then to show that the right-hand side converges to the right-hand side of \eqref{eq:representation}. To this end we will isolate small times and look separately at $D_N$ and $D\setminus D_N$.
			
			Note that all $\mu_n$ are supported in $D\times [0,T]$, so the sequence $(\mu_n)$ is tight and we can extract a subsequence $\mu_{n_k}$ converging weakly to $\mu$. Furthermore, for every $U\subset\subset D$ and $0<t<T$, we have that $\mu_n(U\times[0,t]) \to 0$ as $n\to \infty$, so $\mu|_{\overline{D}\times [0,T)}$ must be concentrated on $\partial D\times [0,T)$.

			Since for $y\in D_N$ we have $p_{t-s}^{D_n}(x,y) \approx p_{t-s}^D(x,y)$ for $n>N+1$, we find that\begin{equation}\label{eq:dno}
			\lim\limits_{n\to\infty}\int_0^t\int_{D_{N}} \frac{p_{t-s}^{D_n}(x,y)}{\mP^y(\tau_{D_n}>1)} \mu_n(dy\, ds) \lesssim  \lim\limits_{n\to\infty}\int_0^t\int_{D\setminus D_n} u(s,z)\int_{D_N} p_{t-s}^{D}(x,y)\nu(y,z)\, dy \, dz\, ds = 0.
			\end{equation}
			We will now show that there exists a modulus of continuity $\omega$ independent of $n$ such that
			\begin{equation}\label{eq:epstime}
			\int_{t-\epsilon}^t\int_{D} \frac{p_{t-s}^{D_n}(x,y)}{\mP^y(\tau_{D_n}>1)} \mu_n(dy\, ds) < \omega(\epsilon).
			\end{equation}
			To this end we will show that the left-hand side converges to 0 as $\epsilon \to 0^+$ for each $n>N$, and that it is nonincreasing with respect to $n$ for each (small) $\epsilon$. By the definition of $\mu_n$ and the fact that $u$ is caloric,
	\begin{align}\label{eq:nomassforward}
				\int_{t-\epsilon}^t\int_{D} \frac{p_{t-s}^{D_n}(x,y)}{\mP^y(\tau_{D_n}>1)} \mu_n(dy\, ds) &= 	\int_{t-\epsilon}^t\int_{D\setminus D_n} J^{D_n}(t,x,s,z)u(s,z)\, dz\, ds\\
				&= u(t,x) - \int_{D_n} p_{\epsilon}^{D_n}(x,y)u(t-\epsilon,y)\, dy.\nonumber
			\end{align}
			The last expression converges to 0 for $\eps\to 0^+$ for all fixed $n$, because $u$ is continuous in both variables, and it is nonincreasing with respect to $n$ because of the domain monotonicity. This proves \eqref{eq:epstime}.

			Note also that the right-hand side of \eqref{eq:representation} is finite because $\mu$ is a finite measure and $\eta_{s,Q}(x)$ is bounded in $s$ and $Q$ for fixed $x$. Therefore,
			\begin{align}\label{eq:limitepstime}
			\lim\limits_{\epsilon\to 0^+}\int_{[t-\epsilon,t)}\int_{\partial D} \eta_{t-s,Q}(x) \mu(dQ\, ds) = 0.
			\end{align}
			By \eqref{eq:dno},\eqref{eq:epstime}, and \eqref{eq:limitepstime}, for any $\delta>0$ there exist $\epsilon$ (small) and $N_0$ (large) such that for $n>N_0$,
			\begin{align*}
			&\bigg|\int_0^t \int_D \frac{p_{t-s}^{D_n}(x,y)}{\mP^y(\tau_{D_n}>1)}\, \mu_n(dy\, ds) - \int_{[0,t)} \int_{\partial D} \eta_{t-s,Q}(x) \mu(dQ\, ds)\bigg|\\ \leq &\,\bigg|\int_{[t-\epsilon,t)}\int_{\partial D} \eta_{t-s,Q}(x) \mu(dQ\, ds)\bigg| + \bigg|\int_{t-\epsilon}^t\int_{D\setminus D_{N}} \frac{p_{t-s}^{D_n}(x,y)}{\mP^y(\tau_{D_n}>1)} \mu_n(dy\, ds)\bigg| + \bigg|\int_0^t\int_{D_{N}} \frac{p_{t-s}^{D_n}(x,y)}{\mP^y(\tau_{D_n}>1)} \mu_n(dy\, ds)\bigg|\\
			+&\,\bigg|\int_0^{t-\epsilon} \int_{D\setminus D_N} \frac{p_{t-s}^{D_n}(x,y)}{\mP^y(\tau_{D_n}>1)}\, \mu_n(dy\, ds) - \int_{[0,t-\epsilon)} \int_{\partial D} \eta_{t-s,Q}(x) \mu(dQ\, ds)\bigg|\\
			\leq &\,3\delta + \bigg|\int_0^{t-\epsilon}\int_{D\setminus D_N} \frac{p_{t-s}^{D_n}(x,y)}{\mP^y(\tau_{D_n}>1)}\, \mu_n(dy\, ds) - \int_{[0,t-\epsilon)} \int_{\partial D} \eta_{t-s,Q}(x) \mu(dQ\, ds)\bigg|.
			\end{align*}
			Furthermore, if $N_0$ is large enough, then by Lemma~\ref{lem:dnd},
			\begin{align*}
				&\bigg|\int_0^{t-\epsilon} \int_{D\setminus D_N} \frac{p_{t-s}^{D_n}(x,y)}{\mP^y(\tau_{D_n}>1)}\, \mu_n(dy\, ds) - \int_{[0,t-\epsilon]} \int_{\partial D} \eta_{t-s,Q}(x) \mu(dQ\, ds)\bigg|\\ \leq &\,\delta + 	\bigg|\int_0^{t-\epsilon} \int_{D\setminus D_N} \frac{p_{t-s}^{D}(x,y)}{\mP^y(\tau_{D}>1)}\, \mu_n(dy\, ds) - \int_{[0,t-\epsilon]} \int_{\partial D} \eta_{t-s,Q}(x) \mu(dQ\, ds)\bigg|.
		\end{align*}
			By Lemma~\ref{lem:noup}, $\mu_n\cdot \textbf{1}_{D\times [0,t-\eps]}\to \mu\textbf{1}_{D\times [0,t-\eps)}$ weakly. By Corollary~\ref{cor:MYcont}, $(s,y)\mapsto \frac{p_{t-s}^D(x,y)}{\mP^y(\tau_{D_n}>1)}$ is in $C([0,t-\epsilon]\times \overline{D})$. So, the last expression is smaller than $2\delta$ for $n$ large enough, which ends the proof.
	\end{proof}

 \begin{theorem}
     The measure $\mu$ obtained in Theorem~\ref{th:repr} is unique.
 \end{theorem}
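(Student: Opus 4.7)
The plan is to show that the difference $\nu := \mu_1 - \mu_2$ of two representing measures vanishes. Setting
$$v(t, x) := \int_{[0, t)} \int_{\partial D} \eta_{t-s, Q}(x) \, \nu(dQ\, ds),$$
we have $v \equiv 0$ on $(0, T) \times D$. Splitting the integral at an intermediate time $a \in (0, t)$ and applying the entrance law \eqref{Entrance} to the portion $s \in [0, a)$, one gets $P^D_{t-a} v(a, \cdot)(x) = 0$, hence
$$\int_{[a, t)} \int_{\partial D} \eta_{t-s, Q}(x) \, \nu(dQ\, ds) = 0, \quad 0 < a < t < T,\ x \in D. \quad (\star)$$

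Next, I would test $(\star)$ against the Dirichlet eigenfunctions $\varphi_k$ of Subsection~\ref{sec:spect}. Using $\int_D p^D_r(x, y) \varphi_k(x)\, dx = e^{-\lambda_k r} \varphi_k(y)$, dividing by $\mP^y(\tau_D > 1)$ and passing to the boundary limit $y \to Q$---which is justified via the H\"older estimates of Theorem~\ref{th:modulus}---one obtains
$$\int_D \eta_{r, Q}(x)\, \varphi_k(x)\, dx = e^{-\lambda_k r} \beta_k(Q), \qquad \beta_k(Q) := \lim_{D \ni y \to Q} \frac{\varphi_k(y)}{\mP^y(\tau_D > 1)}.$$
The boundary trace $\beta_k$ exists and is continuous on $\partial D$: the identity $\varphi_k = \lambda_k G_D \varphi_k$ from \eqref{eq:GDvarphi} combined with Corollary~\ref{cor:gdmod} (applied to $f = \lambda_k \varphi_k$) gives H\"older continuity of $\varphi_k / G_D(x_0, \cdot)$ up to $\partial D$, while Lemma~\ref{Thm3} yields $\mP^y(\tau_D > 1)/G_D(x_0, y) \to C_1 > 0$. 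Plugging back into $(\star)$ with $\rho_k(ds) := \int_{\partial D} \beta_k(Q)\, \nu(dQ\, ds)$ gives
$$\int_{[a, t)} e^{-\lambda_k(t-s)} \rho_k(ds) = 0, \quad 0 < a < t < T.$$
Multiplying by $e^{\lambda_k t}$ and varying $t$ forces the indefinite integral $t \mapsto \int_{[a, t)} e^{\lambda_k s} \rho_k(ds)$ to be identically zero for all $a < t$, whence the signed measure $\rho_k$ vanishes on $[0, T)$ for every $k$.

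Finally, I would lift $\{\rho_k \equiv 0\}$ to $\nu = 0$. For any $g \in C_c([0, T))$, define the signed Borel measure $\sigma_g$ on $\partial D$ by $\sigma_g(B) := \int_{B \times [0, T)} g(s)\, \nu(dQ\, ds)$. Since $\rho_k \equiv 0$,
$$\int_{\partial D} \beta_k(Q)\, \sigma_g(dQ) = \int_{[0, T)} g(s)\, \rho_k(ds) = 0.$$
Iterating $\varphi_k = \lambda_k G_D \varphi_k$ inside $\beta_k$ and using \eqref{Martin} gives $\beta_k(Q) = \lambda_k C_1^{-1} \int_D M_D^{x_0}(y, Q)\, \varphi_k(y)\, dy$. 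By Fubini and the completeness of $\{\varphi_k\}$ in $L^2(D)$, the vanishing of $\int \beta_k\, d\sigma_g$ for all $k$ forces the Martin potential $y \mapsto \int_{\partial D} M_D^{x_0}(y, Q)\, \sigma_g(dQ)$ to be zero on $D$. The injectivity of the elliptic Martin representation---applied to the Jordan decomposition of $\sigma_g$ as in \cite{MR1704245}---then gives $\sigma_g = 0$, and varying $g$ over $C_c([0, T))$ yields $\nu \equiv 0$.

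The main obstacle is the last paragraph: one must justify the Fubini interchange (using the $L^p$ bounds behind Corollary~\ref{cor:gdmod}) and carefully invoke elliptic Martin uniqueness in the signed-measure framework. The H\"older regularity developed in Section~\ref{sec:modulus} provides precisely the boundary smoothness required to make these interchanges and limits rigorous.
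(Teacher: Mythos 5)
Your strategy is genuinely different from the paper's. The paper does not subtract two representing measures: it shows that the approximating measures $\mu_n$, which are built directly from $u$ via the lateral Poisson kernel on $D\setminus D_n$, converge weakly to $\mu\cdot\mathbf{1}_{\overline{D}\times[0,t)}$, the key input being that the approximating measures associated with $\eta_{\cdot,Q}$ converge to $\delta_Q\otimes\delta_0$; uniqueness is then immediate because the $\mu_n$ are determined by $u$ alone. Your route---subtract, derive $(\star)$ from \eqref{Entrance}, pair with the eigenfunctions, kill the time variable by a Laplace-transform argument, and reduce to uniqueness in the elliptic Martin representation---is an interesting dual/spectral alternative, and its first half (existence and continuity of $\beta_k$ via \eqref{eq:GDvarphi}, Corollary~\ref{cor:gdmod} and Lemma~\ref{Thm3}, and the conclusion $\rho_k=0$ on $(0,T)$) can be made rigorous essentially as you describe, keeping in mind that $C_1$ in Lemma~\ref{Thm3} depends on $Q$, so $\beta_k$ carries a factor $C_1(Q)^{-1}$.

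There are, however, genuine gaps. The decisive one is the last step: you invoke completeness of $\{\varphi_k\}$ in $L^2(D)$ to conclude that $h(z):=\int_{\partial D}M_D^{x_0}(z,Q)C_1(Q)^{-1}\sigma_g(dQ)$ vanishes once $\langle h,\varphi_k\rangle=0$ for all $k$. But $h$ need not lie in $L^2(D)$: already $M_D^{x_0}(\cdot,Q)\notin L^2(D)$ in general (in a $C^{1,1}$ set $M_D^{x_0}(z,Q)\approx\delta_D(z)^{\alpha/2}|z-Q|^{-d}$ near $Q$, whose square is not integrable since $\alpha<d$), so for $\sigma_g$ concentrated near a point the orthonormal-basis argument does not apply; one only has $h\in L^1(D)$, and for a general $L^1$ function the vanishing of all eigenfunction coefficients does not by itself give $h=0$. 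Closing this requires extra work, e.g.\ showing $P_t^Dh\equiv 0$ (legitimate for $h\in L^1$ via the uniformly convergent expansion \eqref{eq:dhkspec}), establishing $P_t^DM_D^{x_0}(\cdot,Q)(x)=M_D^{x_0}(x,Q)-\int_0^t\eta^{x_0}_{s,Q}(x)\,ds$, and letting $t\to0^+$. Two further points need attention: (i) $(\star)$ requires $a>0$, so your argument as written never sees a possible atom of $\nu$ at $s=0$, which Theorem~\ref{th:repr} explicitly allows (e.g.\ $\mu=\delta_Q\otimes\delta_0$ yields $u=\eta_{t,Q}$); an additional limit $a\to0^+$ compared with the full identity $v(t,x)=0$ is needed to exclude it. (ii) Pairing $(\star)$ with $\varphi_k$ by Fubini is delicate near $s=t$, since $\int_D\eta_{t-s,Q}(x)\,dx\gtrsim(t-s)^{-\sigma_1}$ by \eqref{eq:Lipsmall} and $|\nu|$ may concentrate there; this is repaired by first working on $[a,b)$ with $b<t$ and then letting $b\uparrow t$, using continuity from above of the finite measure $|\rho_k|$.
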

 \begin{proof}
     Following \cite{MR1438304,MR2365478}, we start by showing that the measures $\mu_n^Q$ corresponding to $\eta_{t,Q}$ converge to $\delta_Q\otimes \delta_0$ for $t>0$, $Q\in \partial D$. To this end, fix $Q\in \partial D$ and let
     \begin{align*}
         \mu_n^Q(y,s) = \mP^y(\tau_{D_n}>1) \int_{D\setminus D_n} \eta_{s,Q}(z) \nu(y,z)\, dz,\quad s>0,\ y\in \mR^d.
     \end{align*}
    By Lemma~\ref{lem:nonsingular}, $\mu_n^Q((B(Q,\varepsilon)\times [0,\varepsilon))^c) \to 0$ as $n\to \infty$, for any $\varepsilon>0$. So, $\mu_n$ converges weakly to $\delta_Q\otimes \delta_0$.

    Now, let $u$ be a singular caloric function and assume that 
    \begin{align*}
        u(t,x) = \int_{[0,t)}\int_{\partial D} \eta_{t-s,Q}(x) \mu(dQ\, ds).
    \end{align*}
    Let $\mu_n(y,s) = \int_{D\setminus D_n} \mP^y(\tau_D>1)u(s,z)\nu(y,z)\, dz$. By Fubini--Tonelli,
    \begin{align*}
        \mu_n(y,s) &= \int_{D\setminus D_n} \mP^y(\tau_D>1) \nu(y,z)\int_{[0,s)}\int_{\partial D} \eta_{s-\tau,Q}(z)\mu(dQ\, d\tau)\, dz\\
        &=\int_{[0,s)}\int_{\partial D} \mu_n^Q(y,s-\tau) \mu(dQ\, d\tau).
    \end{align*}
    Let $f\in C_b( \overline{D} \times [0,T])$. Then,
    \begin{align*}
        \int_0^t \int_{D} f(y,s) \mu_n(y,s)\, dy\, ds &= \int_0^t \int_{D} f(y,s)\int_{[0,s)} \int_{\partial D} \mu_n^Q(y,s-\tau)\mu(dQ\, d\tau)\, dy\, ds\\
        &= \int_{[0,t)} \int_{\partial D} \int_0^{t-\tau}\int_{D} f(y,s+\tau) \mu_n^Q(y,s) \, dy\, ds\, \mu(dQ\, d\tau).
    \end{align*}
    Since $\mu_n^Q \implies \delta_Q\otimes \delta_0$, the above integral with respect to $dy\, ds$ converges to $f(Q,\tau)$. Therefore, by the dominated convergence theorem, 
    \begin{align*}
        \int_0^t \int_{D} f(y,s) \mu_n(y,s)\, dy\, ds \mathop{\longrightarrow}_{n\to\infty}  \int_{[0,t)} \int_{\partial D} f(Q,s) \mu(dQ\,ds),
    \end{align*}
    which means that $\mu_n\implies \mu\cdot \textbf{1}_{
    \overline{D}\times [0,t)}$. Thus, $\mu$ is uniquely determined by $u$.
 \end{proof}
 \subsection{Functions caloric on $(0,T)\times D$}
 \begin{theorem}\label{th:noinit}
     Assume that $u$ is caloric on $(0,T)\times D$ and let $g=u|_{D^c}$. Then there exist unique bounded nonnegative measures $\mu$ on $[0,T)\times \partial D$ and $\mu_0$ on $D$ such that for all $0<t<T$ and $x\in D$,
     \begin{align*}
         u(t,x) = P_t^D \mu_0(x) + \int_{[0,t)}\int_{\partial D}  \eta_{t-s,Q}(x)\, \mu(dQ\, ds) + \int_0^t\int_{D^c} g(s,z)J^D(t,x,s,z)\, dz\, ds.
     \end{align*} 
\end{theorem}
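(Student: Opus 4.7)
The plan is to apply Theorem~\ref{th:repr} together with Lemma~\ref{lem:decomp} to time-shifted versions of $u$ and pass to the limit. Fix small $\eps>0$ and consider $u_\eps(t,x):=u(t+\eps,x)$, which is caloric on $[0,T-\eps)\times D$. By Lemma~\ref{lem:decomp}, write $u_\eps=r_\eps+s_\eps$; formula \eqref{eq:repr} yields
\begin{equation*}
r_\eps(t,x)=\int_D p_t^D(x,y)u(\eps,y)\,dy+\int_0^t\int_{D^c}g(s+\eps,z)J^D(t,x,s,z)\,dz\,ds,
\end{equation*}
and Theorem~\ref{th:repr} gives $s_\eps(t,x)=\int_{[0,t)}\int_{\partial D}\eta_{t-s,Q}(x)\mu^{(\eps)}(dQ\,ds)$ for a nonnegative Borel measure $\mu^{(\eps)}$ on $\partial D\times[0,T-\eps)$. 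Translating time back and using that both $J^D$ and $\eta$ depend on their time arguments only through the difference, I obtain, for $(t,x)\in(\eps,T)\times D$,
\begin{equation}\label{e.app}
u(t,x)=\int_D p_{t-\eps}^D(x,y)u(\eps,y)\,dy+\int_\eps^t\int_{D^c}g(s,z)J^D(t,x,s,z)\,dz\,ds+\int_{[\eps,t)}\int_{\partial D}\eta_{t-s,Q}(x)\tilde\mu^{(\eps)}(dQ\,ds),
\end{equation}
where $\tilde\mu^{(\eps)}$ is the pushforward of $\mu^{(\eps)}$ under the shift $s\mapsto s+\eps$.

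Next I would establish uniform-in-$\eps$ mass bounds by testing \eqref{e.app} at a reference point $(T_1,x_1)\in(0,T)\times D$. Intrinsic ultracontractivity \eqref{eq:largetimesgen} and Remark~\ref{r.por} yield $p_{T_1-\eps}^D(x_1,y)\gtrsim\mP^y(\tau_D>1)$ uniformly for $\eps\in(0,T_1/2)$, whence positivity of the terms in \eqref{e.app} gives
\begin{equation*}
\int_D u(\eps,y)\mP^y(\tau_D>1)\,dy\leq C\,u(T_1,x_1).
\end{equation*}
Likewise, for any $T'\in(0,T)$, picking $T_1\in(T',T)$ and using $\eta_{T_1-s,Q}(x_1)\geq c>0$ uniformly in $s\in[\eps,T']$ and $Q\in\partial D$ (from \eqref{eq:Liplarge} and \eqref{eq:Lipsmalls}), one bounds $\tilde\mu^{(\eps)}(\partial D\times[\eps,T'])\leq C'u(T_1,x_1)$.

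With these bounds I extract subsequential weak limits. The finite positive measures $d\nu_\eps(y):=u(\eps,y)\mP^y(\tau_D>1)\,dy$ on the compact set $\overline D$ converge along a subsequence to a finite measure $\nu$; I set $d\mu_0(y):=\nu|_D(dy)/\mP^y(\tau_D>1)$, a locally finite measure on $D$ that finitely integrates $\mP^y(\tau_D>1)$. The boundary piece $\nu|_{\partial D}$ together with the weak subsequential limit $\mu'$ of $\tilde\mu^{(\eps)}$ on $\partial D\times(0,T)$ assemble into a measure $\mu$ on $\partial D\times[0,T)$, with $\nu|_{\partial D}$ contributing $\mu(\cdot\times\{0\})$. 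To pass to the limit in \eqref{e.app}, rewrite the first integral as $\int_{\overline D}\!\bigl(p_{t-\eps}^D(x,y)/\mP^y(\tau_D>1)\bigr)d\nu_\eps(y)$; by Theorem~\ref{Main1} and Corollary~\ref{cor:MYcont}, the integrand extends continuously to $\overline D$ with boundary values $\eta_{t,Q}(x)$, so weak convergence yields $P_t^D\mu_0(x)+\int_{\partial D}\eta_{t,Q}(x)\,\nu|_{\partial D}(dQ)$. The lateral integral converges by monotone convergence, and the singular integral is handled via weak convergence of $\tilde\mu^{(\eps)}$ against the continuous function $(s,Q)\mapsto\eta_{t-s,Q}(x)$ on $[0,t-\delta]\times\partial D$, the small-$\delta$ tail estimated as in \eqref{eq:nomassforward}.

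Uniqueness follows along the lines of the uniqueness argument after Theorem~\ref{th:repr}: $g$ is pinned down by $u|_{D^c}$; after subtracting the lateral integral, the remaining representation is reconstructed by testing against the normalized lateral Poisson kernels $\mP^{\cdot}(\tau_{D_n}>1)\nu(\cdot,z)$, $z\in D\setminus D_n$, precisely as in the proof after Theorem~\ref{th:repr}, and the two limits distinguish $\mu_0$ from $\mu$. The principal obstacle is the bifurcation of $\nu_\eps$ in the limit: mass of $u(\eps,\cdot)$ may concentrate both in the interior of $D$ (producing $\mu_0$) and on $\partial D$ (producing $\mu(\cdot\times\{0\})$), and one must verify that these two pieces of $\nu$ reassemble correctly with $\mu'$ in the final representation. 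This relies crucially on Theorem~\ref{Main1}, which provides the continuous extension of the kernel $y\mapsto p_t^D(x,y)/\mP^y(\tau_D>1)$ to $\overline D$ with boundary trace $\eta_{t,Q}(x)$, so that weak convergence on $\overline D$ separates cleanly into the desired interior and boundary contributions.
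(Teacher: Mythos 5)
Your proposal is correct and follows essentially the same route as the paper: shift time by $\eps$, apply the $[0,T)$ representation from Lemma~\ref{lem:decomp} and Theorem~\ref{th:repr}, obtain uniform mass bounds, and extract a weak subsequential limit of $\mP^y(\tau_D>1)u(\eps,y)\,dy$ on $\overline{D}$ via Prokhorov, splitting it into an interior part yielding $\mu_0$ and a boundary part absorbed into $\mu$ as the $\delta_0(dt)$ slice. The only cosmetic difference is that the paper treats the boundary measures for different $\eps$ as restrictions of a single $\mu$ on $(0,T)\times\partial D$ (justified by the uniqueness theorem), so the lateral and singular integrals converge by monotone convergence, whereas you take an additional subsequential weak limit of $\tilde\mu^{(\eps)}$.
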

\begin{proof}
    By the results of the previous subsection, there is a nonnegative measure $\mu$ on $(0,T)\times \partial D$ such that for all $0<\eps<t<T$ and $x\in D$,
        \begin{align*}
         u(t,x) = P_{t-\eps}^D u(\eps,\cdot)(x) + \int_{[\eps,t)}\int_{\partial D}  \eta_{t-s,Q}(x)\, \mu(dQ\, ds) + \int_\eps^t\int_{D^c} g(s,z)J^D(t,x,s,z)\, dz\, ds.
     \end{align*} 
     By nonnegativity, and the monotone convergence theorem, the last two integrals increase and converge as $\eps \to 0^+$, so that
      \begin{align*}
         u(t,x) = \lim\limits_{\eps\to 0^+}P_{t-\eps}^D u(\eps,\cdot)(x) + \int_{(0,t)}\int_{\partial D}  \eta_{t-s,Q}(x)\, \mu(dQ\, ds) + \int_0^t\int_{D^c} g(s,z)J^D(t,x,s,z)\, dz\, ds,
     \end{align*} 
     where the remaining limit exists and the expression under it decreases.
     Since $p_{t-\eps}^D(x,y) \approx p_{t}^D(x,y)$ and $p_t^D(x,\cdot)\approx 1$ for any $U\subset\subset D$ we find that $u(\eps,\cdot)$ have bounded integral on $U$. Therefore, by the Prokhorov theorem, there is a sequence $(\eps_n)$ such that $u(\eps_n,\cdot)$ converge weakly  on compact subsets of $D$ to a measure $\mu_0$, locally finite on $D$. Furthermore, we have
     \begin{align*}
         P_{t-\eps}^D u(\eps,\cdot)(x) = \int_D p_{t-\eps}^D(x,y)u(\eps,y)\, dy = \int_D \frac{p_{t-\eps}^D(x,y)}{\mP^y(\tau_D>1)} \mP^y(\tau_D>1)u(\eps,y)\, dy.
     \end{align*}
     Since $\frac{p_{t-\eps}^D(x,y)}{\mP^y(\tau_D>1)} \approx \frac{p_{t}^D(x,y)}{\mP^y(\tau_D>1)} \approx 1$ we find that the functions $y\mapsto \mP^y(\tau_D>1)u(\eps,y)$ have bounded mass. By Prokhorov theorem, we can infer without loss of generality that $\mP^y(\tau_D>1)u(\eps_n,y)$ converge weakly to a finite measure $\widetilde{\mu}$ on $\overline{D}$. We have $\widetilde{\mu}(dy) = \mP^y(\tau_D>1)\mu(dy)$ on $D$. By \eqref{eq:timecont},
     \begin{align*}
         \lim\limits_{\eps\to 0^+} \int_D \frac{p_{t-\eps}^D(x,y)}{\mP^y(\tau_D>1)} \mP^y(\tau_D>1)u(\eps,y)\, dy &= \int_{\overline{D}} \frac{p_{t}^D(x,y)}{\mP^y(\tau_D>1)}\, \widetilde{\mu}(dy)\\
         =&\int_D p_{t}^D(x,y)\, \mu_0(dy) + \int_{\partial{D}} \eta_{t,Q}(x)\, \widetilde{\mu}(dQ).
     \end{align*}
     We end the proof by defining $\mu$ on $[0,T)\times D$ as $\mu\textbf{1}_{(0,T)\times \partial D} + \delta_0(dt)\otimes \widetilde{\mu}$.
\end{proof}
\appendix
\section{Weak and classical formulations for caloric functions}
The following result seems to be well-known, but we were unable to locate a proof. The arguments are very similar to the case of the Laplacian discussed by Hunt \cite{MR79377}. 
\begin{lemma}\label{lem:ptdfhe}
	For any $x\in D$ the function $(t,y) \mapsto p_t^D(x,y)$ is a classical solution to the fractional heat equation with the Dirichlet condition:
	\begin{align}\label{eq:fhe}
	\begin{cases}
	(\partial_t - \Delta^{\alpha/2}_y)p_t^D(x,y) = 0\quad &t>0,\ y\in D,\\
	p_t^D(x,y) = 0\quad &t>0,\ y\in D^c.
	\end{cases}
	\end{align}   
	It is also a weak solution in the sense that for $\phi \in C_c^\infty([0,\infty)\times \mR^d)$ and $0<t_1<t_2<\infty$ we have
	\begin{align*}
	\int_{t_1}^{t_2} \int_D (\partial_t + \Delta^{\alpha/2})\phi(t,y) p_t^D(x,y)\, dy\, dt = \int_D \phi(t_2,y) p_{t_2}^D(x,y)\, dy - \int_D \phi(t_1,y) p_{t_1}^D(x,y)\, dy.
	\end{align*}
\end{lemma}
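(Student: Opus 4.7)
The plan is to deduce the classical identity from Lemma~\ref{lem:spec} and the spectral representation \eqref{eq:dhkspec}, and then obtain the weak formulation by differentiating in time and transferring the fractional Laplacian onto the test function via symmetry of the kernel $\nu$.

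For the classical equation, the bound \eqref{eq:eigenbound} together with Weyl's law \eqref{eq:Weyl} guarantees absolute and uniform convergence on $D\times D$ of the series \eqref{eq:dhkspec} and of its termwise $t$-derivative for $t\ge t_1>0$. Thus
$\partial_t p_t^D(x,y) = -\sum_n \lambda_n e^{-\lambda_n t}\varphi_n(x)\varphi_n(y)$
pointwise on $D\times D$, and by Lemma~\ref{lem:spec}(1)--(2) this series equals $L^D_y p_t^D(x,y)$, which in turn equals $\Delta^{\alpha/2}_y p_t^D(x,y)$ by Lemma~\ref{lem:spec}(4). The Dirichlet condition $p_t^D(x,y)=0$ on $D^c$ is part of the very construction of $p_t^D$ in Section~\ref{sec:prelim} (from \eqref{eq:dhkdef} and the exterior cone condition via Blumenthal's zero-one law).

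For the weak formulation, set $F(t) := \int_D \phi(t,y)\,p_t^D(x,y)\, dy$. The uniform boundedness of $\Delta^{\alpha/2}_y p_t^D(x,y)=\partial_t p_t^D(x,y)$ on $D\times D$ from Corollary~\ref{cor:lptd}, together with the smoothness of $\phi$, permits differentiation under the integral:
$F'(t) = \int_D \partial_t\phi(t,y)\,p_t^D(x,y)\, dy + \int_D \phi(t,y)\,\Delta^{\alpha/2}_y p_t^D(x,y)\, dy.$
Integrating in $t$ from $t_1$ to $t_2$ produces the right-hand side of the lemma, so the claim reduces to the self-adjointness identity
$\int_D \phi(t,y)\,\Delta^{\alpha/2}_y p_t^D(x,y)\, dy \;=\; \int_D p_t^D(x,y)\,\Delta^{\alpha/2}\phi(t,y)\, dy.$

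This is the principal technical point. My plan is to extend $p_t^D(x,\cdot)$ by zero to $\mR^d$---so that the pointwise full-space action of $\Delta^{\alpha/2}$ on the extension coincides on $D$ with $\Delta^{\alpha/2}_y p_t^D(x,y)$, as in the proof of Lemma~\ref{lem:spec}(4)---and to rewrite $\Delta^{\alpha/2}\phi$ in the symmetric second-difference form $\tfrac{1}{2}\int [\phi(\cdot+h)+\phi(\cdot-h)-2\phi(\cdot)]\nu(h)\, dh$, whose integrand is absolutely integrable (being $O(|h|^{2-d-\alpha})$ near $0$ by Taylor expansion of $\phi\in C^2$ with $\alpha<2$). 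Applying Fubini--Tonelli and a translation change of variables then moves the second-difference operator from $\phi$ onto $p_t^D$. The main obstacle is to justify the interchange of integration, which requires a near-/far-diagonal splitting: near the diagonal one uses the interior $C^2$ regularity of $p_t^D(x,\cdot)$ on $D$ (implicit in Lemma~\ref{lem:spec} and the smoothness of the eigenfunctions), while at bounded distance $\nu$ is bounded and $p_t^D(x,\cdot)$ is bounded with compact support in $\overline{D}$.
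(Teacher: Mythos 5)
Your proof of the classical equation is correct but takes a genuinely different route from the paper's. The paper differentiates the Hunt formula \eqref{eq:dhkdef}: it shows that $\partial_t$ and $\Delta^{\alpha/2}_y$ may be interchanged with the expectation $\mE^x[\,p_{t-\tau_D}(X_{\tau_D},y)\,;\,\tau_D<t]$ and then uses $\partial_t p_t=\Delta^{\alpha/2}p_t$ for the free kernel. You instead read the identity off the eigenfunction expansion \eqref{eq:dhkspec} together with Lemma~\ref{lem:spec} and Corollary~\ref{cor:lptd}. Your version is shorter and reuses the spectral machinery of Section~\ref{sec:spect} (which is where the pointwise identity $L^D_yp_t^D=\Delta^{\alpha/2}_yp_t^D$ is in any case needed); the paper's version is self-contained at the level of the Hunt formula and in addition yields the interior $C^2$ bounds on $y\mapsto p_t^D(x,y)$ that any integration-by-parts argument ultimately requires.

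The weak formulation is where your argument has a real gap. The self-adjointness identity you reduce everything to, $\int_D\phi\,\Delta^{\alpha/2}_yp_t^D(x,y)\,dy=\int_Dp_t^D(x,y)\,\Delta^{\alpha/2}\phi(t,y)\,dy$, is \emph{false} for general $\phi\in C_c^\infty([0,\infty)\times\mR^d)$. Carrying out your own symmetrization and Fubini--Tonelli computation over all of $\mR^d$, with $u$ the zero extension of $p_t^D(x,\cdot)$, gives
\begin{align*}
\int_{\mR^d}u\,\Delta^{\alpha/2}\phi(t,\cdot)\,dy=\int_{\mR^d}\phi(t,\cdot)\,\Delta^{\alpha/2}u\,dy
=\int_D\phi\,\Delta^{\alpha/2}_yp_t^D(x,y)\,dy+\int_{D^c}\phi(t,z)\int_Dp_t^D(x,y)\nu(y,z)\,dy\,dz,
\end{align*}
and the exterior term (a lateral Poisson kernel term, cf.\ $J^D$) does not vanish unless $\phi(t,\cdot)=0$ on $D^c$; taking $\phi$ time-independent, nonnegative, supported in $D^c$ makes the stated identity read ``positive $=0$''. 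So the lemma must be read with $\supp\phi(t,\cdot)\subset\subset D$ --- the paper's own one-line proof of the weak part imposes exactly this (``the support of the test function is separated from $\partial D$''), whereas you take the statement at face value and silently drop the exterior term. The same restriction is needed for your near-diagonal Fubini justification: the $O(|h|^{2-d-\alpha})$ bound on the second difference uses $C^2$ bounds on $p_t^D(x,\cdot)$ that are only locally uniform on compact subsets of $D$, not up to $\partial D$, where for Lipschitz $D$ the decay is merely H\"older of some order $\gamma$ that may be smaller than $\alpha$. With the support restriction both steps go through and your proof is complete.
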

\begin{proof}
	By definition, the exterior condition is satisfied, so it suffices to verify that $(\partial_t - \Delta^{\alpha/2})p_t^D(x,y) = 0$. To this end we will differentiate the Hunt formula. Using the subordination and Fourier inversion formulas (see, e.g., Bogdan and Jakubowski \cite[Lemma~5]{MR2283957}) it is easy to see that
 $p_t$ is smooth in $x$ for $t>0$ and $\partial^{\beta}_yp_t(x,y)$ is bounded whenever $|x-y|$ is separated from 0 for any $\beta\in \mathbb{N}_0$. Note that this is the case for $|X_{\tau_D}-y|$. Therefore, for fixed $(t,y)$, by the dominated convergence theorem we find
	\begin{align*}
	\partial^\beta_y p_t^D(x,y) &= \partial^\beta_y p_t(x,y) - \partial^\beta_y \mE^x[ p_{t-\tau_D}(X_{\tau_D},y)\semicol  \tau_D<t]\\
	&=\partial^\beta_y p_t(x,y) -  \mE^x[ \partial^\beta_yp_{t-\tau_D}(X_{\tau_D},y)\semicol  \tau_D<t].
	\end{align*}
	Furthermore,
	\begin{align}\label{eq:c2bound}
	\|p_t^D(x,\cdot)\|_{C^2(B(y,\delta_D(y)/2))} < \infty,
	\end{align}
	hence $\Delta^{\alpha/2}_y p_t^D(x,y)$ is well defined for $(t,y)\in D\times(0,\infty)$ and we have
	\begin{align*}
	\Delta^{\alpha/2}_y p_t^D(x,y) = \Delta^{\alpha/2}_y p_t(x,y) - \Delta^{\alpha/2}_y \mE^x[ p_{t-\tau_D}(X_{\tau_D},y)\semicol  \tau_D<t].
	\end{align*}
	We can also interchange $\Delta^{\alpha/2}_y$ with the expectation. The easiest way to see that is by using Fubini--Tonelli, \eqref{eq:c2bound} and the Taylor expansion in the following (symmetrized) representation of the fractional Laplacian:
	\begin{align*}
	\Delta^{\alpha/2}_y u(x) = \int_{\mR^d} (u(x+y) - 2u(x) + u(x-y))\nu(y)\, dy.
	\end{align*}
	Thus, we obtain
	\begin{align*}
	\Delta^{\alpha/2}_y p_t^D(x,y) = \Delta^{\alpha/2}_y p_t(x,y) -  \mE^x[\Delta^{\alpha/2}_y p_{t-\tau_D}(X_{\tau_D},y)\semicol  \tau_D<t].
	\end{align*}
	We now compute the time derivative. Note that $\partial_t p_t(x,y)$ exists and is equal to $\Delta^{\alpha/2}_y p_t(x,y)$, so it is bounded for $|x-y|$ separated from 0. We have
	\begin{align*}
	\partial_t p_t^D(x,y) = \partial_t p_t(x,y) - \partial_t \mE^x[ p_{t-\tau_D}(X_{\tau_D},y)\semicol  \tau_D<t],
	\end{align*}
	provided the last expression exists, which we now prove. Without loss of generality, let $h>0$. We have
	\begin{align*}
	&\frac 1h \big[\mE^x[p_{t+h-\tau_D}(X_{\tau_D},y)\semicol  \tau_D<t+h] - \mE^x[p_{t-\tau_D}(X_{\tau_D},y)\semicol  \tau_D<t]\big]\\
	=&\frac 1h \mE^x[p_{t+h-\tau_D}(X_{\tau_D},y) - p_{t-\tau_D}(X_{\tau_D},y)\semicol  \tau_D<t] + \frac 1h \mE^x[p_{t+h-\tau_D}(X_{\tau_D},y)\semicol  t\leq \tau_D<t+h].
	\end{align*}
	By the dominated convergence theorem, we get that 
	\begin{align*}
	\lim\limits_{h\to 0^+}\frac 1h \mE^x[p_{t+h-\tau_D}(X_{\tau_D},y) - p_{t-\tau_D}(X_{\tau_D},y)\semicol  \tau_D<t] =  \mE^x[ \partial_t p_{t-\tau_D}(X_{\tau_D},y)\semicol  \tau_D<t].
	\end{align*}
	Furthermore, by \eqref{DensityApprox},
	\begin{align*}
	\frac 1h \mE^x[p_{t+h-\tau_D}(X_{\tau_D},y)\semicol  t\leq \tau_D<t+h] \leq C \mP^x(\tau_D \in [t,t+h)),
	\end{align*}
	and the last expression converges to $0$ by the dominated convergence theorem. Therefore we get
	\begin{align*}
	\partial_t p_t^D(x,y) &= \partial_t p_t(x,y) -  \mE^x[ \partial_tp_{t-\tau_D}(X_{\tau_D},y)\semicol  \tau_D<t]\\ &= \Delta^{\alpha/2}_y p_t(x,y) -  \mE^x[\Delta^{\alpha/2}_y p_{t-\tau_D}(X_{\tau_D},y)\semicol  \tau_D<t] = \Delta^{\alpha/2}_y p_t^D(x,y),
	\end{align*}
	so $p_t^D$ is a classical solution to the fractional heat equation \eqref{eq:fhe}. It is also a weak solution, as follows from integration by parts and the fact that the support of the test function $\phi$ is separated from $\partial D$.
\end{proof}
\section{Almost-increasingness}
The following result is used in the proof of Lemma~\ref{lem:JDest}.
\begin{lemma}\label{lem:DHKmonot}
For open $U\subset\subset U'\subset\subset D$ and $T>0$, there is $C = C(d,\alpha,\unD,U,d(U,(U')^c),T)$ such that 
	\begin{align*}
		p_s^D(x,y) \leq Cp_t^D(x,y),\quad x\in U,\ y\in D\setminus U',\ 0<s<t<T.
	\end{align*}
\end{lemma}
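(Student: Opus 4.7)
The natural strategy is to apply Chapman--Kolmogorov and reduce the inequality to comparing $p_s^D$ at the starting point $x$ with its values at nearby interior points. Set $r := d(U,(U')^c)/2$, and for $x\in U$ let $B := B(x,r)$. Since $D^c\subseteq (U')^c$, we have $d(U,D^c)\ge 2r$, hence $B\subseteq U'\subseteq D$ and $\delta_D(z)>r$ for every $z\in B$. Dropping mass outside $B$ gives
\begin{align*}
p_t^D(x,y) \;=\; \int_D p_{t-s}^D(x,z)\,p_s^D(z,y)\,dz \;\ge\; \int_{B} p_{t-s}^D(x,z)\,p_s^D(z,y)\,dz.
\end{align*}

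The first step will be to show the uniform comparison $p_s^D(z,y) \gtrsim p_s^D(x,y)$ for all $z\in B$, $y\in D\setminus U'$, $s\in(0,T)$, so that this factor can be pulled out of the integral. The approximate factorization \eqref{factorization} reduces this to three elementary estimates: the common $\mP^y(\tau_D>s)$ factors cancel; the survival probabilities satisfy $\mP^z(\tau_D>s)\ge\mP^z(\tau_{B(z,r)}>T)=\mP^0(\tau_{B(0,r)}>T)>0$ (using $B(z,r)\subseteq D$) while $\mP^x(\tau_D>s)\le 1$; and the free heat kernels satisfy $p_s(z,y)\approx p_s(x,y)$ uniformly in $s\in(0,T)$ by \eqref{DensityApprox}, since both $|z-y|$ and $|x-y|$ lie in the fixed range $[r,\diam D]$.

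The second step is the uniform lower bound
\begin{align*}
\int_B p_{t-s}^D(x,z)\,dz \;\ge\; \mP^x(\tau_B > t-s) \;\ge\; \mP^0(\tau_{B(0,r)}>T) \;>\;0,
\end{align*}
obtained from the domain monotonicity $p_{t-s}^D\ge p_{t-s}^B$ and translation invariance of $X$. Multiplying the two bounds yields $p_t^D(x,y)\gtrsim p_s^D(x,y)$. No serious obstacle is expected; the only point needing a little care is that the constants from \eqref{factorization} (depending on $\unD,T$) and from \eqref{DensityApprox} (depending on $d,\alpha,r,\diam D$) compose into the advertised dependence $C(d,\alpha,\unD,U,d(U,(U')^c),T)$, which is immediate since $r$ and $\diam D$ are each controlled by these parameters.
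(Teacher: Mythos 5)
Your proof is correct, and it takes a genuinely different route from the paper's. The paper deduces the lemma directly from the approximate factorization \eqref{factorization} together with Lemma~\ref{lem:survmonot} (the ``almost-increasingness'' $\mP^y(\tau_D>s)\le C(s/t)^{-\sigma}\mP^y(\tau_D>t)$ with $\sigma<1$), so the dangerous growth of the survival probability at the boundary as $s\to 0$ is beaten by the factor $p_s(x,y)\approx s$; that auxiliary lemma is itself a nontrivial boundary estimate proved in the appendix via the Green function bounds, the Harnack inequality, and the decay estimate of \cite[Lemma~5]{MR1438304}. You instead route the extra time $t-s$ through a ball around the interior point $x$ via Chapman--Kolmogorov, so that the $y$-dependent factor $\mP^y(\tau_D>s)$ appears at the \emph{same} time $s$ on both sides and cancels; the boundary behavior of the survival probability never enters, and what remains are uniform interior bounds ($\mP^z(\tau_{B(z,r)}>T)>0$, $p_s(z,y)\approx p_s(x,y)$ for $|z-y|,|x-y|\in[r,\diam D]$, and $\int_B p_{t-s}^D(x,z)\,dz\ge\mP^x(\tau_B>t-s)$). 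All steps check out, including the geometric inclusions $B(x,r)\subseteq U'$ and $\delta_D(z)>r$, and the constant dependence is as advertised (in fact you only need $d(U,(U')^c)$ rather than $U$ itself). Your argument is more elementary and self-contained for this particular statement; the paper's has the advantage that Lemma~\ref{lem:survmonot} is needed independently elsewhere (e.g., in Lemma~\ref{lem:nonsingular}, Corollary~\ref{cor:estimates}, and Remark~\ref{r.nL}), so no extra work is incurred there, and it yields the quantitative exponent $\sigma$ rather than a bare constant.
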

The proof (given below) relies on approximate factorization of $p_t^D$ and the next lemma on the survival probability $\mP^y(\tau_D>s)$.
Of course, the latter  is nonincreasing in $s\in (0,\infty)$. The following \textit{relative} upper bound is a partial converse and may be independent interest.
\begin{lemma}\label{lem:survmonot}
	Let $T>0$. There exists $C = C(d,\alpha,\unD,T)$ and $\sigma\in (0,1)$ such that
	\begin{align}\label{e.rup}
		 \frac{\mP^y(\tau_D>s)}{\mP^y(\tau_D > t)} &\leq C  \bigg(\frac st\bigg)^{-\sigma},\quad y\in D,\ 0<s<t<T.
	\end{align}
\end{lemma}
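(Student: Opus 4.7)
The plan is to leverage the corkscrew-scaled estimate \eqref{e.ipp},
\[
\frac{\mP^y(\tau_D>t)}{\mP^y(\tau_D>1)} \approx \frac{1}{\Phi(A_{t^{1/\alpha}}(y))},
\]
which, as noted in the proof of Corollary~\ref{cor:estimates}, follows from \cite[Theorem~2]{MR2722789} combined with Remark~\ref{r.por} and in particular does not invoke the present lemma. Taking the quotient of the displays for $s$ and $t$, and setting $r=s^{1/\alpha}$, $R=t^{1/\alpha}$, the lemma reduces to producing $\beta\in(0,\alpha)$ and $C$, both depending only on $d,\alpha,\unD,T$, such that
\begin{equation}\label{eq:goalcorkscrew}
\Phi(A_R(y))\le C\bigl(R/r\bigr)^{\beta}\Phi(A_r(y)),\qquad 0<r\le R\le T^{1/\alpha},\ y\in D;
\end{equation}
the conclusion will then hold with $\sigma=\beta/\alpha\in(0,1)$.

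The regime $R\ge r_0/2$ is handled by inspection: there $A_R(y)=x_0$ and $\Phi(A_R(y))=1$, while the classical growth estimate \cite[Lemma~5]{MR1438304} (applied at a boundary point near $A_r(y)$, combined with the Carleson estimate \cite[Lemma~13]{MR1991120}) yields $\Phi(A_r(y))\gtrsim r^\beta$ for some $\beta\in(0,\alpha)$, so \eqref{eq:goalcorkscrew} is immediate since $R\le T^{1/\alpha}$ is bounded.

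For the small-scale regime $0<r<R<r_0/2$, I would connect $A_r(y)$ and $A_R(y)$ by a Harnack chain of balls of geometrically increasing radii, of length $N\lesssim 1+\log_2(R/r)$, all contained in $D\setminus\{x_0\}$ and on which $G_D(x_0,\cdot)$ is positive and harmonic. Iterating the scale-invariant Harnack inequality \cite[Lemma~1]{MR1703823} along the chain then gives
\[
G_D(x_0,A_R(y))\le c^N G_D(x_0,A_r(y))\lesssim (R/r)^{\log_2 c}\,G_D(x_0,A_r(y)),
\]
and this transfers to $\Phi$: on the range where $G_D(x_0,\cdot)\le 1$ the transfer is trivial; in the complementary regime $\Phi$ has been clipped to $1$, which forces $y$ close to $x_0$, hence $\delta_D(y)$ bounded below, so $\Phi(A_r(y))\approx\Phi(A_R(y))\approx 1$ and \eqref{eq:goalcorkscrew} is automatic there.

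The main obstacle will be the quantitative construction of the Harnack chain in a possibly rough Lipschitz domain. The natural route is to exploit the interior cone condition built into the Lipschitz assumption (cone aperture $\arccot\lambda$, cf.\ the proof following \eqref{eq:ardef}): when $\delta_D(y)\ge R$ we may take $A_r(y)=A_R(y)=y$ and the chain is trivial; when $\delta_D(y)<R$, we first chain $A_r(y)$ to $A_r(Q)$---where $Q\in\partial D$ is nearest to $y$---through finitely many balls of radius $\kappa r$ inside $D\cap B(Q,Cr)$, and then climb along the axis of the interior cone at $Q$ by doubling radii until reaching scale $R$, producing a chain of length $O(1+\log_2(R/r))$ with constants depending only on $d,\alpha,\lambda,r_0$.
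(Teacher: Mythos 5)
Your reduction to the corkscrew-point inequality $\Phi(A_R(y))\le C(R/r)^\beta\Phi(A_r(y))$ is the same as the paper's (which writes the ratio as $\Phi(A_{t^{1/\alpha}}(y))/\Phi(A_{s^{1/\alpha}}(y))$ via \cite[Theorem~2]{MR2722789} and \cite[Lemma~17]{MR1991120}), and your treatment of the large-scale and trivial regimes is essentially fine. But the core step fails: iterating the Harnack inequality along a chain of length $N\le M\bigl(1+\log_2(R/r)\bigr)$ yields the exponent $\beta=M\log_2 C_H$, where $C_H=C_H(d,\alpha)$ is the Harnack constant of a single step. For the fractional Laplacian with $d\ge 2$ one has $C_H\gtrsim 2^d$ (already visible from the $|x-v|^{-d}$ factor in the Poisson kernel of the ball \eqref{eq:pkformula}), so even with the shortest conceivable chain $\beta\ge d>\alpha$, and hence $\sigma=\beta/\alpha>1$. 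The statement explicitly demands $\sigma\in(0,1)$, and this is not cosmetic: Lemma~\ref{lem:nonsingular} needs $s\,(s/t)^{-\sigma}\to 0$ and Lemma~\ref{lem:DHKmonot} needs $s\,(s/t)^{-\sigma}\lesssim t$, both of which break for $\sigma>1$. A Harnack chain cannot produce an exponent below $\alpha$; the paper instead invokes the boundary growth estimate \cite[Lemma~5]{MR1438304} at the boundary point $Q$ nearest to $y$, which directly gives $\Phi(A_R(Q))\lesssim (R/r)^{\gamma}\Phi(A_r(Q))$ with $\gamma=\gamma(d,\alpha,\unD)\in(0,\alpha)$, and then transfers between $A_\rho(y)$ and $A_\rho(Q)$ with a \emph{single} Harnack application at each scale. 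You already cite that lemma in your $R\ge r_0/2$ regime; the fix is to use it in the small-scale regime as well, in place of the chain.

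Two secondary points. First, your justification of the ``clipped'' regime is imprecise: $\Phi(A_R(y))=1$ forces $A_R(y)$ (not $y$) into the superlevel set $\{G_D(x_0,\cdot)\ge 1\}$, which is compactly contained in $D$; one then gets $\delta_D(y)\ge\delta_D(A_R(y))-R$ and must still split on whether $R$ is small or of constant order. Second, for the chain argument you would also need the balls to avoid $x_0$, since $G_D(x_0,\cdot)$ is harmonic only in $D\setminus\{x_0\}$; this is repairable (e.g.\ by working with $\mE^{\cdot}\tau_D$ or $\varphi_1$ instead), but it is moot given the exponent problem above.
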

\begin{proof}
By \cite[Remark~3]{MR2722789} and scaling, $\mP^y(\tau_D>t_1)\approx \mP^y(\tau_D>t_2)$, uniformly in $y\in D$ and $t_1,t_2$ in each compact subset of $(0,\infty)$.
Therefore we may assume that $s$ and $t$ in \eqref{e.rup} are small. Then we can also assume that $y$ is close to the boundary, otherwise the terms on the left-hand side of \eqref{e.rup} are bounded from below by the survival probability of a sufficiently small ball (and above by $1$).
In this setting, recalling the notation of Section~\ref{sec:geom}, by \cite[Theorem~2]{MR2722789} and \cite[Lemma~17]{MR1991120} we get 
	\begin{align}\label{e.rmr}
		\frac{\mP^y(\tau_D>s)}{\mP^y(\tau_D>t)} \approx \frac{\mE^{A_{t^{1/\alpha}}(y)}\tau_D}{\mE^{A_{s^{1/\alpha}}(y)}\tau_D}  \approx \frac{\Phi(A_{t^{1/\alpha}}(y))}{\Phi(A_{s^{1/\alpha}}(y))},
	\end{align} 
 uniformly for the considered point $y$ and times $s,t$. 
 Let $Q\in \partial D$ be closest to $y$.
To estimate the rightmost ratio in \eqref{e.rmr}, we consider three geometric situations:\\ 
	\textbf{Case 1.} If $y\in \mathcal{A}_{t^{1/\alpha}}(y)\cap \mathcal{A}_{s^{1/\alpha}}(y)$, then we can take $A_{t^{1/\alpha}}(y) = A_{s^{1/\alpha}}(y)=y$, proving \eqref{e.rup}\\
	\textbf{Case 2.} If $y\in \mathcal{A}_{s^{1/\alpha}}(y)$, but  $y\notin \mathcal{A}_{t^{1/\alpha}}(y)$, then $\kappa s^{1/\alpha} \leq \delta_D(y) =|y-Q|< \kappa t^{1/\alpha}$,
so
\begin{align*}
|A_{t^{1/\alpha}}(y) - A_{t^{1/\alpha}}(Q)| & \leq |A_{t^{1/\alpha}}(y) - y| + |y-Q| + |Q- A_{t^{1/\alpha}}(Q)| 
\leq (2+\kappa)t^{1/\alpha}.
\end{align*}
By definition, \( \delta_D(A_{t^{1/\alpha}}(y))\wedge \delta_D(A_{t^{1/\alpha}}(Q)) \geq \kappa t^{1/\alpha} \).
Therefore, by the Harnack inequality \cite[Lemma~1]{MR1703823}, we find that \(\Phi(A_{t^{1/\alpha}}(y))\approx  \Phi(A_{t^{1/\alpha}}(Q))  \). 

On the other hand, since $\kappa\le 1/2$, we have $y\in \mathcal{A}_{\delta_D(y)/\kappa}(Q)$. 
In particular, we can take ${A}_{s^{1/\alpha}}(y)={A}_{\delta_D(y)/\kappa}(Q)=y$.
Then, by \cite[Lemma~5]{MR1438304}, we get
	\begin{align*}
		\frac{\Phi(A_{t^{1/\alpha}}(y))}{\Phi(A_{s^{1/\alpha}}(y))} \approx \frac{\Phi(A_{t^{1/\alpha}}(Q))}{\Phi(A_{\delta_D(y)/\kappa}(Q))} \lesssim  \bigg(\frac{t^{1/\alpha}}{\delta_D(y)/\kappa}\bigg)^{\gamma} \leq \bigg(\frac st\bigg)^{-\gamma/\alpha},
	\end{align*}
 where $\gamma = \gamma(d,\alpha,\unD)\in (0,\alpha)$. This ends the proof in this case.\\
	\textbf{Case 3.} If $y\notin \mathcal{A}_{s^{1/\alpha}}(y) \cup \mathcal{A}_{t^{1/\alpha}}(y)$, then $\delta_D(y)<\kappa s^{1/\alpha}<\kappa t^{1/\alpha}$. 
 By the same argument as in the previous case, \(\Phi(A_{t^{1/\alpha}}(y))\approx  \Phi(A_{t^{1/\alpha}}(Q))  \),
 \(\Phi(A_{s^{1/\alpha}}(y))\approx  \Phi(A_{s^{1/\alpha}}(Q))  \), and
\begin{align*}
		\frac{\Phi(A_{t^{1/\alpha}}(y))}{\Phi(A_{s^{1/\alpha}}(y))} \approx  \frac{\Phi(A_{t^{1/\alpha}}(Q))}{\Phi(A_{s^{1/\alpha}}(Q))} \lesssim    \bigg(\frac st\bigg)^{-\gamma/\alpha}
	\end{align*}
The proof is complete.
\end{proof}
Let us explain why \eqref{e.rup} is a partial converse to nonincreasingness of the survival probability. We may interpret \eqref{e.rup} as \textit{weak lower scaling} (with exponent $-\sigma$)  near $s=0$ of the survival probability $f(s):=\mP^x(\tau_D>s)$, uniform in $x\in D$. Such scaling is defined as almost-increasingness $f(s)/s^{-\sigma}\le C f(t)/t^{-\sigma}$ for (bounded arguments) $0<s\le t$; see, e.g., \cite{MR3165234}.
\begin{proof}[Proof of {\rm Lemma~\ref{lem:DHKmonot}}]
	We use \eqref{factorization}:
	\begin{align*}
		p_s^D(x,y) \approx \mP^x(\tau_D>s) p_s(x,y)\mP^y(\tau_D>s).
	\end{align*}
	Since $x\in U\subset \subset D$, we have $\mP^x(\tau_D>s) \lesssim \mP^x(\tau_D>t)$ because the latter quantity is bounded from below by a constant depending only on $U,d,\alpha,\unD$, and $T$. Furthermore, since $y\in D\setminus U'$, by \eqref{DensityApprox} we have $p_s(x,y) \approx s$. Therefore the statement of the lemma follows from Lemma~\ref{lem:survmonot}.
\end{proof}
\section{No mass concentration forward in time}
Let $\mu_n$ be the sequence of measures converging to $\mu$ constructed in the proof of Theorem~\ref{th:repr}. We will show that $\mu_n\cdot \textbf{1}_{[0,t]\times D}$ do not accumulate mass \textit{at} time $t$. Here is the precise formulation.
\begin{lemma}\label{lem:noup}
    Let $0<t<T$ and let $f\in C([0,t]\times \overline{D})$. Then
    \begin{align*}
        \lim\limits_{n\to\infty} \int_0^t\int_{D} f(s,y)\mu_n(dy\, ds) = \int_{[0,t)}\int_{\partial D} f(s,Q)\mu(dQ\, ds).
    \end{align*}
\end{lemma}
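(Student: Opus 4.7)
The plan is to split $\int_0^t\int_D f\,d\mu_n$ at a small shift $t-\varepsilon$, handle the bulk interval $[0,t-\varepsilon]$ via weak convergence, and bound the residual mass $\mu_n((t-\varepsilon, t]\times D)$ uniformly in $n$, then let $\varepsilon\to 0$. For the bulk contribution, since the finite measure $\mu$ has at most countably many temporal atoms, for $\varepsilon$ in the complementary dense set Portmanteau's theorem applied to $\mu_n\Rightarrow\mu$ on $\overline D\times[0,T]$ yields the weak convergence of restrictions $\mu_n|_{[0,t-\varepsilon]\times\overline D}\Rightarrow \mu|_{[0,t-\varepsilon]\times\overline D}$; using that $\mu$ is concentrated on $\partial D\times[0,T)$ gives $\int_{[0,t-\varepsilon]\times D} f\,d\mu_n\to\int_{[0,t-\varepsilon]\times\partial D}f\,d\mu$, and sending $\varepsilon\to 0$ along the continuity set yields $\int_{[0,t)\times\partial D}f\,d\mu$ by dominated convergence.

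The heart of the argument is showing $\lim_{\varepsilon\to 0}\limsup_n\mu_n([t-\varepsilon,t]\times D)=0$. Fix $\theta>0$ with $t+\theta<T$ and apply the Chapman--Kolmogorov decomposition that produced \eqref{eq:nomassforward}, but now on the cylinder $(t-\varepsilon,t+\theta)\times D_n$, using that $u$ vanishes on $D^c$:
\begin{equation*}
u(t+\theta,x_0)=\int_{D_n}p^{D_n}_{\varepsilon+\theta}(x_0,y)\,u(t-\varepsilon,y)\,dy+\int_{t-\varepsilon}^{t+\theta}\!\!\int_D\frac{p^{D_n}_{t+\theta-s}(x_0,y)}{\mP^y(\tau_{D_n}>1)}\,\mu_n(dy\,ds).
\end{equation*}
The factorization \eqref{factorization} together with \eqref{DensityApprox} and the uniform control on survival probabilities (Lemma~\ref{lem:survmonot}) gives a lower bound of order $\theta$ on the kernel for $s\in[t-\varepsilon,t]$, $y\in D$, and $n\geq n_0$, independent of $\varepsilon$. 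Restricting the integral to this strip yields
\begin{equation*}
c\theta\,\mu_n([t-\varepsilon,t]\times D)\leq u(t+\theta,x_0)-\int_{D_n}p^{D_n}_{\varepsilon+\theta}(x_0,y)\,u(t-\varepsilon,y)\,dy.
\end{equation*}
Passing $n\to\infty$ (monotone convergence of $p^{D_n}\uparrow p^D$) and then $\varepsilon\to 0$ (Proposition~\ref{prop:cont} and dominated convergence, with the a priori bound $\int p^{D_n}_{\varepsilon+\theta}(x_0,\cdot)u(t-\varepsilon,\cdot)\leq u(t+\theta,x_0)$) reduces matters to controlling $R(\theta):=u(t+\theta,x_0)-P^D_\theta u(t,\cdot)(x_0)$.

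The hard part will be showing $R(\theta)/\theta\to 0$ as $\theta\to 0^+$; heuristically, this expresses the fact that our measures $\mu_n$ are generated from $u$ only forward in time, so any atom of $\mu$ at time $t$ should come from mass on the upper side of $t$, not from $[t-\varepsilon,t]$. The plan is to use singular caloricity of $u$ on the cylinder $(t,t+\theta)\times D_n$ to rewrite
\begin{equation*}
R(\theta)=\lim_{n\to\infty}\int_t^{t+\theta}\!\!\int_{D\setminus D_n} u(s,z)\,J^{D_n}(t+\theta,x_0,s,z)\,dz\,ds,
\end{equation*}
and bound the right-hand side using the uniform estimates for the lateral Poisson kernel from Lemma~\ref{lem:JDest} together with continuity of $u$ across time $t$, exploiting both the short temporal length $\theta$ and the shrinking spatial region $D\setminus D_n$. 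Combining the residual bound with the bulk convergence through a continuous temporal cutoff $\phi_\varepsilon\in C([0,T])$ with $\phi_\varepsilon=1$ on $[0,t-\varepsilon]$ and $\phi_\varepsilon(t)=0$ then yields the lemma.
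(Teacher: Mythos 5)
Your splitting at $t-\varepsilon$, the treatment of the bulk interval via Portmanteau, and the Chapman--Kolmogorov identity on the cylinder $(t-\varepsilon,t+\theta)\times D_n$ together with the lower bound of order $\theta$ on the kernel are all correct and match the paper's strategy up to that point. The genuine gap is in the step you yourself flag as the hard part: the claim that $R(\theta)/\theta\to 0$ as $\theta\to 0^+$ is \emph{false} in general, so the reduction is a dead end. Indeed, passing to the limit in the lateral integral over $(t,t+\theta)$ gives
\begin{equation*}
R(\theta)=u(t+\theta,x_0)-P^D_\theta u(t,\cdot)(x_0)=\int_{[t,t+\theta)}\int_{\partial D}\eta_{t+\theta-s,Q}(x_0)\,\mu(dQ\,ds),
\end{equation*}
and if $\mu$ charges $\{t\}\times\partial D$ this is at least $\mu(\{t\}\times\partial D)\inf_Q\eta_{\theta,Q}(x_0)\gtrsim \theta^{1-\sigma_1}$ by \eqref{eq:Lipsmall} (see also Remark~\ref{r.nL}: $\eta_{\theta,Q}(x_0)/\theta\to\infty$). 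Concretely, take $u(s,x)=\eta_{s-t,Q_0}(x)$ (a singular caloric function with representing measure $\delta_{Q_0}\otimes\delta_t$, by Lemma~\ref{lem:Yaglompot}): then $R(\theta)/\theta\gtrsim\theta^{-\sigma_1}\to\infty$, while the lemma itself holds trivially because $u(s,\cdot)\equiv 0$ for $s\le t$ forces $\mu_n([0,t]\times D)=0$. The source of the loss is that your inequality $c\theta\,\mu_n([t-\varepsilon,t]\times D)\le u(t+\theta,x_0)-\int_{D_n}p^{D_n}_{\varepsilon+\theta}(x_0,y)u(t-\varepsilon,y)\,dy$ retains on the right-hand side the entire lateral mass emitted during $[t,t+\theta]$, which is exactly the mass that creates an atom of $\mu$ at time $t$ and is not $o(\theta)$.

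The repair is to cancel that forward contribution exactly rather than estimate it: applying the mean value property on both cylinders $(t-\varepsilon,t+\theta)\times D_n$ and $(t,t+\theta)\times D_n$ and subtracting identifies the quantity you actually need,
\begin{equation*}
\int_{t-\varepsilon}^{t}\int_{D}\frac{p^{D_n}_{t+\theta-s}(x_0,y)}{\mP^y(\tau_{D_n}>1)}\,\mu_n(dy\,ds)
= P^{D_n}_{\theta}\bigl(u(t,\cdot)-P^{D_n}_{\varepsilon}u(t-\varepsilon,\cdot)\bigr)(x_0)
\le \sup_{x\in D_n}\bigl(u(t,x)-P^{D_n}_{\varepsilon}u(t-\varepsilon,x)\bigr),
\end{equation*}
which for \emph{fixed} $\theta$ tends to $0$ as $\varepsilon\to 0^+$, uniformly in $n$ (it is nonincreasing in $n$ by domain monotonicity and vanishes for each fixed $n$ by continuity of $u$ and strong continuity of $P^{D_n}$). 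This is the paper's argument; no limit $\theta\to 0$ is needed.
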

\begin{proof}
    It suffices to show that there exists a modulus of continuity $\omega$ independent of $n$ such that
    \begin{align}\label{eq:omegaeps}
         \int_{t-\eps}^t\int_{D} \mu_n(dy\, ds) < \omega(\eps).
    \end{align}
    Fix $\theta\in(0,T-t)$. By \eqref{factorization} we have
    \begin{align*}
         \int_{t-\eps}^t\int_{D} \mu_n(dy\, ds) &= \int_{t-\eps}^t\int_{D\setminus D_n}\int_{D_n}u(z,s)\mP^y(\tau_{D_n}>1)\nu(y,z)\, dy\, dz\, ds\\
         &\int_{t-\eps}^t\int_{D\setminus D_n}u(z,s)\int_{D_n}p_{t+\theta-s}^D(x,y)\nu(y,z)\, dy\, dz\, ds\\
         &\int_{t-\eps}^t\int_{D\setminus D_n}u(z,s)J^{D_n}(t+\theta,x,s,z)\, dz\, ds.
    \end{align*}
    Note that
    \begin{align*}
        &\,\int_{t-\eps}^t\int_{D\setminus D_n}u(z,s)J^{D_n}(t+\theta,x,s,z)\, dz\, ds\\
        = &\, u(t+\theta,x) - \int_{t}^{t+\theta}\int_{D\setminus D_n}u(z,s)J^{D_n}(t+\theta,x,s,z)\, dz\, ds - P_{\theta + \eps}^{D_n}u(t-\eps)(x)\\
        = &\, P_{\theta}^{D_n}u(t)(x) - P_{\theta + \eps}^{D_n}u(t-\eps)\\
        = &\,P_{\theta}^{D_n}(u(t) - P_\eps^{D_n}u(t-\eps))(x)\\
        \leq &\,\sup_{x\in D_n} (u(t,x) - P_\eps^{D_n}u(t-\eps,x)).
    \end{align*}
    The last expression decreases with $n$ for $\eps$ fixed, and converges to $0$ as $\eps\to 0^+$ for every $n>N$ because of the continuity of $u$ and strong continuity of $P_t^{D_n}$. This proves \eqref{eq:omegaeps}.
\end{proof}
\bibliographystyle{abbrv}
\bibliography{HK}

\end{document}